\documentclass[reqno]{amsart}

\usepackage{amssymb,amsthm,amsmath} 
\usepackage[top=25truemm,bottom=25truemm,left=25truemm,right=25truemm]{geometry}
\usepackage{graphicx}
\usepackage{latexsym}
\usepackage{comment}
\usepackage{setspace}
\usepackage{mathrsfs}

\numberwithin{equation}{section}
\theoremstyle{plain}
\newtheorem{theorem}{Theorem}[section]
\newtheorem{lemma}[theorem]{Lemma}
\newtheorem{proposition}[theorem]{Proposition}
\newtheorem{corollary}[theorem]{Corollary}

\theoremstyle{definition}
\newtheorem{definition}[theorem]{Definition}
\newtheorem{remark}[theorem]{Remark}

\newcommand{\ad}{\,\mathrm{ad}} 

\begin{document}

\title[Inverse scattering for the nonlocal nonlinear Schr\"odinger equation]
{On inverse scattering for the nonlocal nonlinear Schr\"odinger equation with slowly decaying data}

\author[Hideo Takaoka]{Hideo Takaoka}

\address{Hideo Takaoka \newline
\indent Department of Mathematics, Kobe University, Kobe, 657-8501, Japan}
\email{takaoka@math.kobe-u.ac.jp}

\subjclass[2020]{35Q51, 35Q55, 42B37.}
\keywords{nonlocal nonlinear Schr\"odinger equation, inverse scattering problem.}

\begin{abstract}
This paper discusses about an inverse scattering problem to the nonlocal nonlinear Schr\"odinger equation.
We extend the previous result treated by Y. Zhao and E. Fan of inverse scattering result at the point with slowly decaying data.
More precisely, we show the global existence of solution in $H^{1,s}$ for $s>1/2$ by assuming small data in $L^1$.
The proof relies on the inverse scattering method based on the ZS-AKNS (Zakharov-Shabat, Ablowitz-Kaup-Newell-Segur) eigenvalue problem on the spectral theory of ordinary differential equations upon the framework of slowly decaying data.
\end{abstract}

\maketitle

\section{Introduction}\label{sec:intro}
We consider the global existence of solutions to the initial value problem for the following nonlocal nonlinear Schr\"odinger equation
\begin{equation}\label{eq:nnls}
\begin{gathered}
i\partial_t q(t,x)+\partial_x^2 q(t,x)=2q(t,x)r(t,x)q(t,x),\quad (t,x)\in\mathbb{R}^2,\\
q(0,x)=q_0(x),\quad x\in\mathbb{R},
\end{gathered}
\end{equation}
where $r(t,x)=\sigma \overline{q(t,-x)}$ and $\sigma\in\{+1,-1\}$.
This equation was first introduced by Ablowitz and Musslimani \cite{ablowitz2} as a new nonlocal reduction of the Ablowitz-Kaup-Newell-Segur (AKNS) system \cite{ablowitz1} and be known to be completely integrable.
It is invariant under the combined action of parity operation ($\mathcal{P}:x\to -x$) and time-reversal operator ($\mathcal{T}:i\to -i$), hence self-induced potential $V(t,x)= 2q(t,x)r(t,x)$ is $\mathcal{PT}$ symmetric, $V(t,x)=\overline{V(t,-x)}$.
Such $\mathcal{PT}$ symmetric arises in optical systems, see \cite{regensburger,ruter}.

The nonlocal nonlinear Schr\"odinger equation \eqref{eq:nnls} is the Hamiltonian equation with respect to the following canonical Poisson structure:
$$
\{F,G\}=\frac{1}{i}\int_{\mathbb{R}} \left(\frac{\delta F}{\delta q(x)}\frac{\delta G}{\delta r(x)}-\frac{\delta F}{\delta r(x)}\frac{\delta G}{\delta q(x)}\right)\,dx.
$$
In particular, the equation \eqref{eq:nnls} is the Hamiltonian flow associated to
\begin{equation}\label{eq:hamiltonian}
H=\int_{\mathbb{R}} \left(q_x(t,x)r_x(t,x)+q^2(t,x)r^2(t,x)\right)\,dx,
\end{equation}
namely
$$
\partial_t q(x)=\{q(x),H\}=\frac{1}{i}\int_{\mathbb{R}}\frac{\delta H}{\delta r(y)}\delta(x-y)\,dy=\frac{1}{i}\frac{\delta H}{\delta r(x)}.
$$
The equation \eqref{eq:nnls} possesses the Lax operators:
\begin{equation}\label{eq:lax1}
L(q)=
\begin{bmatrix}
\kappa-\partial_x & q \\
-r & \kappa+\partial_x \\
\end{bmatrix},
\qquad
P(q)=i
\begin{bmatrix}
2\partial_x^2-qr & -q_x-2q\partial_x \\
r_x+2r\partial_x & -2\partial_x^2+qr \\
\end{bmatrix},
\end{equation}
where $\kappa\in\mathbb{C}$ denotes the spectral parameter.
Then the equation \eqref{eq:nnls} admits a Lax pair representation $\partial_tL=[P,L]$.

There is an extensive literature on the subject to the nonlinear Schr\"odinger equation such as replacing $r(t,x)=\sigma \overline{q(t,-x)}$ with $r(t,x)=\sigma \overline{q(t,x)}$ in \eqref{eq:nnls}, that is,
\begin{equation}\label{eq:nls}
 i\partial_t q(t,x)+\partial_x^2 q(t,x)=2\sigma|q(t,x)|^2q(t,x).
\end{equation}
The equation \eqref{eq:nls} was proven to be a completely integrable by mathematicians Zakharov and Shabat \cite{zakharov}.
They designed the inverse scattering transform to solve that initial value problem of the equation \eqref{eq:nls} for initial data of which decays rapidly as $x\to\pm\infty$.
By the inverse scattering transform, the equation is completely integrable establishing that it possesses an infinite number of conservation laws.
Ablowitz, Kaup, Newell and Segur \cite{ablowitz1} developed a method to solve an initial value problem of a broad class of nonlinear evolution equations by various reductions of the spectral problem.
For the defocusing case $\sigma=1$, Deift and Zhou \cite{deift} obtained the long-time asymptotic for solutions in the setting $H^{1,1}$.
As to the existing well-posedness result, Tsutsumi \cite{tsutsumi} proved global well-posedness of the corresponding initial value problem for the equation \eqref{eq:nls} in $L^2$ (see also \cite{cazenave}). 
More recently, Harrop-Griffiths, Killip and Vi\c{s}an \cite{harrop} extended the global well-posedness with low regularity in $H^s$ for $s>-1/2$, which is almost optimal according to the scaling analysis:
$$
q_{\lambda}(t,x)=\lambda q(\lambda^2 t,\lambda x).
$$

On the other hands, only few results are known on the equation \eqref{eq:nnls}.
The local well-posedness in $L^2$ holds in a similar way to the method in \cite{cazenave,tsutsumi} with some minor changes, but we know less for global existence of solutions due to the lack of a positive-definite and conserved energy of Hamiltonian \eqref{eq:hamiltonian}.
When $\sigma=-1$, both equations of \eqref{eq:nnls} and \eqref{eq:nls} have the same global in time soliton solutions
\begin{equation}\label{eq:soliton}
e^{\omega^2t}Q_{\omega}(x)\quad (\omega>0)
\end{equation}
where $Q_{\omega}(x)=\omega/(e^{\omega x}+e^{-{\omega x}})$ is the ground state solutions of \eqref{eq:nls}.
The ground state solution $Q_{\omega}(x)$ refers to the eigenfunction corresponding to the first eigenvalue (the smallest eigenvalue) of the Hamiltonian operator for the equation \eqref{eq:nls}.
Genoud \cite{genoud} proved that the soliton solutions \eqref{eq:soliton} to \eqref{eq:nnls} are unstable by blowing up singularities near the origin (see e.g., \cite{gurses,rybalko}). 
In \cite{takaoka}, the author proved that the solution remains close to the orbit of the soliton for a long time, if the initial data is close to the ground state soliton. 

As classical integrable equations, the inverse scattering method addresses large-time behavior of solutions to \eqref{eq:nnls} by proving global well-posedness and determining large-time asymptotic behavior for soliton-free initial data.
Recently, the existence of global solutions in the weighted Sobolev space $H^{1,1}$ with the smallness assumption on the $L^1$ norm was studied by Zhao and Fan \cite{zhao}, based upon the inverse scattering theory (see e.g., \cite{ablowitz3}).
To perform inverse scattering, it is required that a potential must be small in $L^1$,  that is, the total spatial variation of scatterers is small in $L^1$.
In view of the fact that
$$
\|Q_{\omega}\|_{L^1}=\|Q_1\|_{L^1}=\int_{\mathbb{R}}\frac{dx}{e^x+e^{-x}}=2\int_1^{\infty}\frac{dt}{t^2+1}=\frac{\pi}{2},
$$
the inverse scattering method cannot capture soliton solutions in \eqref{eq:soliton}.

Let us turn now to our inverse scattering results in $H^{1,s}$.
The aim is to push down the decaying rate $s$ in weighed Lebesgue space $L^{2,s}$ to $s>1/2$.
This type of result was also studied by Cuccagnaa and Pelinovsky \cite{cuccagna} for the equation \eqref{eq:nls}.

\begin{definition}
Let $1\le p\le \infty$, denote by $L^p(\mathbb{R})$ the standard Lebesgue space.
We set $L^p=L^p(\mathbb{R})$ for simplicity. 
For $s \in\mathbb{R}$, we denote the weighted Lebesgue space $L^{p,s}$ of functions $f$ such that $\langle x\rangle^sf(x)\in L^p(\mathbb{R})$, where $\langle x\rangle=(1+|x|^2)^{1/2}$.
Moreover, denote by $H^s$ the usual Sobolev space for $s\in\mathbb{R}$.
For $s\in\mathbb{R}$, we denote the weighted Sobolev space $H^{1,s}$, whose norm is
$$
\|f\|_{H^{1,s}}=\|f\|_{H^1}+\|f\|_{L^{2,s}}.
$$
Let
$X^s$ be spaces equipped with the norm
$$
\|f\|_{X^s}=\|f\|_{L^{2,1}}+\|f\|_{H^s}.
$$
\end{definition}

Our aim is to prove the following result described below.

\begin{theorem}\label{thm:main}
Let $s>1/2$.
There exists $\varepsilon_0>0$ such that for any initial data $q_0\in H^{1,s}$ with $\|q_0\|_{L^1}\le \varepsilon_0$, there exists a unique global solution $q(t)\in C_t(\mathbb{R},H_x^{1,s})$ to the initial value problem \eqref{eq:nnls}.
Moreover, the map $q_0\mapsto q$ is Lipschitz continuous from $H^{1,s}$ to $C_t(\mathbb{R},H_x^{1,s})$.
\end{theorem}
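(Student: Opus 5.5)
The plan is to follow the inverse scattering scheme of Zhao and Fan \cite{zhao}, adapted to slowly decaying data in the spirit of Cuccagna and Pelinovsky \cite{cuccagna}. For the ZS-AKNS operator $L(q)$ in \eqref{eq:lax1} with the pair of potentials $(q,r)$, $r(x)=\sigma\overline{q(-x)}$, we build Jost solutions by Volterra integral equations. Because $\|q\|_{L^1}=\|r\|_{L^1}\le\varepsilon_0$, the Neumann series converges uniformly in $\kappa$ on the closed half planes. Two consequences follow. First, the Jost functions are analytic in the open half planes and continuous up to the real line. Second, the transmission coefficient satisfies $a(\kappa)=1+O(\varepsilon_0)$, so it has no zeros. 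Hence there are no eigenvalues, no spectral singularities, and no solitons; this is exactly where smallness is essential, as the remark on $\|Q_\omega\|_{L^1}=\pi/2$ indicates. We then define the reflection coefficients $\rho_1=b/a$ and $\rho_2=\tilde b/\tilde a$. In the nonlocal case these are two independent functions, tied to each other only by the $\mathcal{PT}$ symmetry.

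The first main estimate is the direct map $q\mapsto(\rho_1,\rho_2)$ from $H^{1,s}$ into $H^{s}\cap L^{2,1}$, that is, into $X^s$ (up to symmetric roles), and it should be Lipschitz on the $L^1$-small ball. Decay of $q$ in $L^{2,s}$ should give regularity $H^s$ of $\rho$ in $\kappa$, and $q\in H^1$ should give $\rho\in L^{2,1}$, via integration by parts in the Volterra equations. For $s\le 1$ we cannot differentiate the Jost functions in $\kappa$ pointwise. Instead we estimate fractional differences $\kappa\mapsto m(x,\kappa+h)-m(x,\kappa)$ in $L^2_\kappa$, using $\langle x\rangle^{s}$ weights, interpolation, and the bound $\|q\|_{L^1}\lesssim\|q\|_{L^{2,s}}$, which holds since $s>1/2$.

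The inverse map runs through the matrix Riemann--Hilbert problem with jump matrix built from $\rho_1,\rho_2$. Since $a$ has no zeros there are no residue conditions. Smallness of $\sup|\rho_j|\lesssim\varepsilon_0$ makes $I-\mathcal{C}_w$ invertible on $L^2$ by a Neumann series, replacing the positivity argument unavailable in the nonlocal case. We recover $q$ and $r$ from the $1/\kappa$ asymptotics and show that $(\rho_1,\rho_2)\mapsto q$ is Lipschitz from $X^s$ back to $H^{1,s}$, using Cauchy projector bounds $\|\mathcal{C}_\pm f\|_{H^s}\lesssim\|f\|_{H^s}$ and the $x$-dependent phase $e^{2i\kappa x}$, which converts $\kappa$-regularity into $x$-decay. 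We also check that the reconstructed pair satisfies $r=\sigma\overline{q(-x)}$ by uniqueness of the Riemann--Hilbert solution under the symmetry.

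The time evolution is explicit: $\rho_j(t,\kappa)=\rho_j(0,\kappa)e^{\pm 4i\kappa^2 t}$. Multiplication by this phase preserves $L^{2,1}$ and $\sup|\rho|$, and it maps $H^s$ to $H^s$ with norm growing polynomially in $t$, which is harmless for any fixed finite time. The only subtlety is that the inverse step needs smallness of $\sup|\rho_j(t)|$, which is conserved, and not smallness of $\|q(t)\|_{L^1}$. So the inverse map applies for all $t$, giving $q(t)\in H^{1,s}$. Continuity in $t$ and Lipschitz dependence come from composing the Lipschitz direct map, the continuous phase flow, and the Lipschitz inverse map. To identify the result as the solution, we approximate $q_0$ by Schwartz data, where the Lax pair argument shows the reconstructed $q$ solves \eqref{eq:nnls}, and pass to the limit. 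Uniqueness follows from local $L^2$ well-posedness. The main obstacle is the $H^s$ estimate for $s\in(1/2,1)$ of both the direct and inverse maps, where the fractional-difference bounds must be uniform and must hold without a lower bound on $|a|$ coming from positivity.
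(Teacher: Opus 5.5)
Your proposal is correct. It follows the paper's overall scheme: Volterra Jost functions; fractional-difference $L^2_k$ estimates with $\langle x\rangle^{s}$ weights for the $H^s_k$ regularity of the scattering data; integration by parts for $L^{2,1}_k$; the lower bound on $|a|,|d|$ from $L^1$-smallness; half-line reconstruction completed by $r=\sigma\overline{q(-x)}$; and the explicit phase evolution with $\langle t\rangle^{s}$ growth and conserved $\sup_k|r_j|$. Two steps are done differently.

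\textbf{Solvability of the Riemann--Hilbert problem.} You use a Neumann series. The $\mathscr{C}^\pm$ are orthogonal projections on $L^2_k$, so
$$
\|\mathscr{C}^\pm_v\|_{L^2\to L^2}\le\|v(x,\cdot)-I\|_{L^\infty_k}\lesssim\|r_1\|_{L^\infty}+\|r_2\|_{L^\infty}\lesssim\varepsilon_0 .
$$
Lemma \ref{lem:bijective} instead uses Fredholm index zero plus a vanishing lemma. Your remark that positivity is unavailable in the nonlocal case is wrong. One has $\Re(w^*vw)>0$ for all $w\neq0$ if and only if $|\overline{r_1}-r_2|^2<4\bigl(1-\Re(r_1r_2)\bigr)$, i.e.\ $|r_1+\overline{r_2}|<2$, and this holds whenever $|r_1|,|r_2|<1$. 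This is \cite[Lemma 3.1]{zhao}, which the paper uses.

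Your route is still simpler, and it has a real advantage: it gives resolvent bounds uniform in $x$ and $t$ at once. The paper needs such bounds for \eqref{eq:M_L^2-boound} and in Section \ref{sec:main}, but the Fredholm alternative alone does not supply them. The positivity route needs only $\|r_j\|_{L^\infty}<1$ rather than smallness. However, Proposition \ref{prop:red-pr} assumes $\|r_j\|_{L^\infty_k}\ll1$ anyway, so nothing is lost for this theorem.

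\textbf{Construction and identification of the global solution.} You obtain the global solution as the composition of the direct map, the phase flow and the inverse map. You then identify it with the PDE solution by approximating with Schwartz data. The paper instead applies the Lax pair to the local solution given by local well-posedness, and uses the reconstruction as an a priori $H^{1,s}$ bound to continue it. Your version gives time continuity and Lipschitz dependence directly from the Lipschitz bounds of the three maps. The paper's version avoids a limiting argument when identifying the reconstructed function with the solution of \eqref{eq:nnls}.
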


\begin{remark} 
We comment on how our work is related to the inverse scattering problem for the equation \eqref{eq:nls}.
An inverse scattering transform for the defocusing nonlinear Schr\"odinger equation ($\sigma=1$ in \eqref{eq:nls}) has be carried out for arbitrarily large data in $L^1$ (see e.g., \cite{ablowitz1,deift}).
Without major revision, the proof in this manuscript can also be applied to the defocusing nonlinear Schr\"odinger equation \eqref{eq:nls} under no small data hypothesis (see Remark \ref{rem:nosmall} in the paper and also e.g., \cite{cuccagna}).
For the focussing case ($\sigma=-1$ in \eqref{eq:nls}), we require the small data hypothesis, which avoids soliton solutions.
\end{remark}

The inverse scattering problems when $q\in H^s\cap L^{2,s}$ is open.
Making an inverse scattering problem in this function space is a challenging topic to research.

As the remainder of the paper, we will constrain $0<s<1$.
Throughout the paper, we restrict our attention and consider to the case when $0<s<1$.
 
In closing the introduction section, we indicate the organization of the paper.
In Section \ref{sec:notations}, we describe some basic notation.
In Section \ref{sec:akns}, we characterize the scattering data of the ZS-AKNS system associated with the Lax pair.
In Section \ref{sec:ist}, the inverse scattering transform is considered, which is a nonlinear analogue of the Fourier transform.
In Section \ref{sec:rhproblem}, we formulate the associated Riemann-Hilbert problem to determine generalized Jost solutions of ZS-AKNS system.
In Section \ref{sec:rp}, the scattering solution, the potential $q$ is reconstructed from the Riemann-Hilbert problem.
In Section \ref{sec:main}, we then show Theorem \ref{thm:main}. 

\section{Notation}\label{sec:notations}

In this section, we introduce notation and collect several useful results to be utilized throughout the paper.
Let $e_1$ and $e_2$ be unit vectors in $\mathbb{R}^2$: 
$$
e_1=
\begin{bmatrix}
1 \\
0
\end{bmatrix}, \quad
e_2= 
\begin{bmatrix}
0 \\
1
\end{bmatrix}.
$$
For a matrix $A(x)=[a_{ij}(x)]$, let
$$
|A(x)|=\left(\sum_{ij}|a_{ij}(x)|^2\right)^{1/2}=(\mathrm{tr} A(x)^*A(x))^{1/2}
$$
be the Frobenius norm of $A(x)$ and letting $\|A\|_{L^p}=\||A(\cdot)|\|_{L^p_x}$.

Let $\sigma_3$ be the third Pauli matrix (like a reflection about the $x$ axis):
$$
\sigma_3=
\begin{bmatrix}
1 & 0 \\
0 & -1 \\
\end{bmatrix}.
$$

The notation $\mathbb{R}_{+}$ refers to the set of non-negative real numbers, while $\mathbb{R}_-$ represents $\mathbb{R}_-=-\mathbb{R}_+$.
Also, we use $\mathbb{C}_+=\{z\in\mathbb{C}\mid \Im z>0\}$ and $\mathbb{C}_-=\{z\in\mathbb{C}\mid \Im z<0\}$.

To distinguish the coordinate in multi-component spaces, we sometimes use the expression $\mathbb{R}_{x}$ to denote the set of real numbers assigned to a specific coordinate axis, like the $x$-axis.
Moreover, the function space $L^2_x(\mathbb{R})$ is square-integrable functions defined over the real numbers of $x$.

If $s>1/2$, we have $L^{2,s}(\mathbb{R})\hookrightarrow L^1(\mathbb{R})$ and $H^s(\mathbb{R})\hookrightarrow C(\mathbb{R})\cap L^{\infty}(\mathbb{R})$.

If $0<s<1$, then we use Plancherel's formula in $d$-dimensional Euclidean space $\mathbb{R}^{d}$ to obtain the following \cite{bergh}:
$$
\int_{\mathbb{R}^d\times \mathbb{R}^d}\frac{|f(x)-f(y)|^2}{|x-y|^{d+2s}}\,dxdy=C(d,s)\int_{\mathbb{R}^d}|\xi|^{2s}|\mathscr{F}[f](\xi)|^2\,d\xi,
$$
where 
$$
C(d,s)=2\int_{\mathbb{R}^d}\frac{1-\cos t_1}{|t|^{d+2s}}\,dt,
$$
and $\mathscr{F}[f]$ is the Fourier transform of $f$ as
$$
\mathscr{F}[f](\xi)=\frac{1}{(2\pi)^{d/2}}\int_{\mathbb{R}^n}e^{-i\xi\cdot x}f(x)\,dx.
$$

The Cauchy operator on $\mathbb{R}$ is given by
$$
\mathscr{C}[h](z)=\frac{1}{2\pi i}\int_{\mathbb{R}}\frac{h(s)}{s-z}\,ds,\quad z\in\mathbb{C}\backslash \mathbb{R}
$$
for suitable functions $f$ on $\mathbb{R}$.
Moreover, let
$$
\mathscr{C}^{\pm}[h](k)=\lim_{\varepsilon\downarrow 0}\mathscr{C}[h](k\pm i \varepsilon),\quad k\in\mathbb{R}
$$
be the Plemelj-Sokhotski operators over $\mathbb{R}$.
Noting that by the Sokhotski-Plemelj formula 
$$
\mathscr{C}^{\pm}[f]=\pm \frac12f+\frac{i}{2}\mathscr{H}f
$$
for a Schwartz function $f$, where $\mathscr{H}$ is the Hilbert transform operator as
$$
\mathscr{H}[f](x)=\frac{1}{\pi}\lim_{\varepsilon\downarrow 0}\int_{|s-x|>\varepsilon}\frac{f(s)}{x-s}\,ds.
$$
Then 
\begin{equation}\label{eq:C-iden}
\mathscr{C}^{+}[f]-\mathscr{C}^{-}[f]=f,\quad \mathscr{C}^{+}[f]+\mathscr{C}^{-}[f]=i\mathscr{H}[f],
\end{equation}
on the Schwartz function space.
Since the Hilbert transform $\mathscr{H}$ is bounded operator from $L^p$ to itself for $1<p<\infty$, the Cauchy operator and the Plemelj-Sokhotski operator do so.
The Plemelj-Sokhotski operators are orthogonal projectors on $L^2$, namely
\begin{equation}\label{eq:C-orth}
\mathscr{C}^{+}\mathscr{C}^{+}=\mathscr{C}^+,\quad \mathscr{C}^-\mathscr{C}^-=\mathscr{C}^-,\quad \mathscr{C}^+\mathscr{C}^-=\mathscr{C}^-\mathscr{C}^+=0.
\end{equation}
Moreover
$$
\mathscr{C}\left[\mathscr{C}^{\pm}[h]\right](z)=
\begin{cases}
\pm \mathscr{C}[h](z), & z\in\mathbb{C}_{\pm},\\
0, & z\in \mathbb{C}_{\mp},
\end{cases}
$$
respectively, on the Schwartz function space.

\section{ZS-AKNS scattering problem}\label{sec:akns}

Throughout the section, we omit the time variable $t$ and will write $q(t,x)$ like $q(x)$ according to conventions.
By replacing $\kappa=ik$ in \eqref{eq:lax1}, we introduce he ZS-AKNS operator
\begin{equation*}
T=-\sigma_3L(q)=\partial_xI-ik\sigma_3-Q(q),
\quad
Q(q)=
\begin{bmatrix}
0 & 	q \\
r & 0 \\
\end{bmatrix}.
\end{equation*}
Let us consider the eigen-solutions $\psi=\psi(x,k)$ associated with Lax operators:
\begin{equation*}
T\psi=0,
\end{equation*}
where $\psi$  is a square matrix of two dimensions.
By setting $m=\psi(x,k) e^{-ixk\sigma_3}$, the equation $T\psi=0$ is transformed into
\begin{equation}\label{eq:eq-m}
\partial_xm=ik\ad\sigma_3(m)+Q(q)m,
\end{equation}
where $\ad\,\sigma_3(m)=[\sigma_3,m]=\sigma_3 m-m\sigma_3$ is the adjoint action of $\sigma_3$ on $m$. 
Denote by $m^{(\pm)}(x,k)$ the solution of \eqref{eq:eq-m} with the asymptotic condition at $\pm\infty$
\begin{equation*}\label{eq:boundary}
\lim_{x\to \pm\infty}m^{(\pm)}(x,k)=I,
\end{equation*}
respectively.
By Hadamard's lemma $e^{\ad A}B=e^ABe^{-A}$, we have the Jost matrix $m^{(\pm)}$ based on the Volttera integral equations:
\begin{equation}\label{eq:volttera}
m^{(\pm)}(x,k)  =I+K_{q,k,\pm}[m^{(\pm)}(\cdot,k)](x)
\end{equation}
where
$$
K_{q,k,\pm}[\psi](x)=\int_{\pm\infty}^xe^{ik(x-y)\ad\sigma_3}Q(q(y))\psi(y)\,dy.
$$

Let $m_1^{(\pm)}$ and $m_2^{(\pm)}$ be the first and second columns of $m^{(\pm)}$, namely
$$
m^{(\pm)}(x,k)=
\begin{bmatrix}
m_1^{(\pm)}(x,k) & m_2^{(\pm)}(x,k)
\end{bmatrix}.
$$
By indicating the entries of a vector by subscripts, we write
$$
m_1^{(\pm)}(x,k)=
\begin{bmatrix}
m_{1,1}^{(\pm)}(x,k)\\
m_{1,2}^{(\pm)}(x,k)
\end{bmatrix},
\quad
m_2^{(\pm)}(x,k)=
\begin{bmatrix}
m_{2,1}^{(\pm)}(x,k)\\
m_{2,2}^{(\pm)}(x,k)
\end{bmatrix}
$$
When using these notations, we have from \eqref{eq:volttera} that
\begin{equation}\label{eq:m_1}
\begin{split}
m_1^{(\pm)}(x,k)& =e_1
+\int_{\pm\infty}^x
\begin{bmatrix}
1 & 0 \\
0 & e^{-2ik(x-y)}  
\end{bmatrix}
Q(q(y))m_1^{(\pm)}(y,k)\,dy\\
& =
\begin{bmatrix}
1\\
0
\end{bmatrix}
+\int_{\pm\infty}^x
\begin{bmatrix}
q(y)m_{1,2}^{(\pm)}(y,k) \\
e^{-2ik(x-y)}\sigma\overline{q(-y)}m_{1,1}^{(\pm)}(y,k)
\end{bmatrix}
\,dy,
\end{split}
\end{equation}
\begin{equation}\label{eq:m_2}
\begin{split}
m_2^{(\pm)}(x,k) & =e_2
+\int_{\pm\infty}^x
\begin{bmatrix}
e^{2ik(x-y)} & 0 \\
0 & 1 
\end{bmatrix}
Q(q(y))m_2^{(\pm)}(y,k)\,dy\\
&  =
\begin{bmatrix}
0\\
1
\end{bmatrix}
+\int_{\pm\infty}^x
\begin{bmatrix}
e^{2ik(x-y)}q(y)m_{2,2}^{(\pm)}(y,k) \\
\sigma\overline{q(-y)}m_{2,1}^{(\pm)}(y,k)
\end{bmatrix}
\,dy,
\end{split}
\end{equation}
which reduces from \eqref{eq:m_2} that
\begin{equation*}
\begin{bmatrix}
\overline{m_{2,2}^{(\mp)}(-x,k)}\\
-\sigma\overline{m_{2,1}^{(\mp)}(-x,k)}
\end{bmatrix}
=
\begin{bmatrix}
1\\
0
\end{bmatrix}
+\int_{\pm\infty}^x
\begin{bmatrix}
q(y)(-\sigma\overline{m_{2,1}^{(\mp)}(-y,k)})\\
e^{2i\overline{k}(x-y)}\sigma\overline{q(-y)}\overline{m_{2,2}^{(\mp)}(-y,k)}
\end{bmatrix}
\,dy
\end{equation*}
Then we expect that 
\begin{equation}\label{eq:equality}
\begin{bmatrix}
m_{1,1}^{(\pm)}(x,k)\\
m_{1,2}^{(\pm)}(x,k)
\end{bmatrix}
=
\begin{bmatrix}
\overline{m_{2,2}^{(\mp)}(-x,-\overline{k})}\\
-\sigma\overline{m_{2,1}^{(\mp)}(-x,-\overline{k})}
\end{bmatrix}
=
\begin{bmatrix}
0 & 1 \\
-\sigma & 0
\end{bmatrix}
\overline{
\begin{bmatrix}
m_{2,1}^{(\mp)}(-x,-\overline{k})\\
m_{2,2}^{(\mp)}(-x,-\overline{k})
\end{bmatrix}}.
\end{equation}

The existence of solutions for \eqref{eq:volttera} can be proven by iteration method.
We recall the following result, see \cite[pp.~1050]{deift} (see also \cite[Lemma 2,1]{zhao}).

\begin{lemma}\label{lem:m^+-}
Suppose $q\in L^1$.
Then for all $k\in\mathbb{R}$,  the solutions $m^{(\pm)}$ of the Volterra integral equations \eqref{eq:volttera} exist uniquely in $C_x(\mathbb{R})\cap L_x^{\infty}(\mathbb{R})$.
Moreover, for all $x\in\mathbb{R}$, $m_1^{(+)}(x,k)$ in \eqref{eq:m_1} and $m_2^{(-)}(x,k)$ in \eqref{eq:m_2} are analytic for $k\in\mathbb{C}_+$, continuous for $k\in\overline{\mathbb{C}_+}$, while $m_1^{(-)}(x,k)$ in \eqref{eq:m_1} and $m_2^{(+)}(x,k)$ in \eqref{eq:m_2} are analytic for $k\in\mathbb{C}_-$, continuous for $k\in\overline{\mathbb{C}_-}$.
\end{lemma}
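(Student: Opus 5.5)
The plan is to solve each column equation \eqref{eq:m_1}, \eqref{eq:m_2} separately by Picard iteration (a Neumann series), using only that the kernels are bounded by $|q(y)|+|r(y)|$ in the right half-plane of $k$. Take $m_1^{(+)}$ as the model case. In \eqref{eq:m_1} with the $+$ sign we have $y>x$, so $x-y\le 0$. For $\Im k\ge 0$ this gives $|e^{-2ik(x-y)}|=e^{2\Im k\,(x-y)}\le 1$. Hence the integral operator
$$
K[\phi](x)=\int_{+\infty}^x \mathrm{diag}(1,e^{-2ik(x-y)})Q(q(y))\phi(y)\,dy
$$
satisfies the pointwise bound $|K[\phi](x)|\le \int_x^\infty \rho(y)|\phi(y)|\,dy$, where $\rho(y)=|q(y)|+|q(-y)|\in L^1$. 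Iterating, the $n$-th term $K^n[e_1]$ obeys the standard Volterra estimate
$$
|K^n[e_1](x)|\le \frac{1}{n!}\Bigl(\int_x^\infty\rho(y)\,dy\Bigr)^n\le \frac{\|\rho\|_{L^1}^n}{n!}.
$$
This follows by induction, using the ordered-simplex identity $\int_x^\infty \rho(y)\frac{1}{(n-1)!}(\int_y^\infty\rho)^{n-1}dy=\frac1{n!}(\int_x^\infty\rho)^n$. So the series $\sum_n K^n[e_1]$ converges uniformly in $x\in\mathbb{R}$ and $k\in\overline{\mathbb{C}_+}$, with $\|m_1^{(+)}\|_{L^\infty}\le e^{2\|q\|_{L^1}}$. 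Note that no smallness of $q$ is needed.

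Next I would establish the remaining properties term by term. Each iterate is continuous in $x$, since it is an indefinite integral of an $L^1$ function times bounded continuous factors. Uniform convergence then gives $m_1^{(+)}\in C_x\cap L^\infty_x$. For uniqueness, the difference $w$ of two bounded solutions satisfies $|w(x)|\le\int_x^\infty\rho|w|$. Iterating gives $|w|\le \|w\|_\infty(\int_x^\infty\rho)^n/n!\to0$, so $w=0$. For regularity in $k$, each iterate is a finite iterated integral of $e^{-2ik(x-y)}$-type factors. For fixed $x$ and $y\ge x$ these are entire in $k$ and bounded by $1$ on $\overline{\mathbb{C}_+}$. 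Dominated convergence (dominant $\rho(y_1)\cdots\rho(y_n)$ on the simplex) then shows each iterate is continuous on $\overline{\mathbb{C}_+}$. Morera's theorem, with Fubini justified by the same domination, shows each iterate is analytic on $\mathbb{C}_+$. The uniform limit inherits continuity on $\overline{\mathbb{C}_+}$ and, by Weierstrass, analyticity on $\mathbb{C}_+$.

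The other three cases follow by the same argument with the sign bookkeeping changed. For $m_2^{(-)}$ the factor is $e^{2ik(x-y)}$ with $y<x$, so it is bounded when $\Im k\ge0$. For $m_1^{(-)}$ the factor is $e^{-2ik(x-y)}$ with $y<x$, bounded for $\Im k\le0$. For $m_2^{(+)}$ the factor is $e^{2ik(x-y)}$ with $y>x$, bounded for $\Im k\le0$. Alternatively, one can deduce these cases from the symmetry \eqref{eq:equality}, since $k\mapsto-\overline{k}$ preserves each half-plane. Because \eqref{eq:equality} was only heuristically derived, I would instead rely on the direct argument. For real $k$, all four columns exist because all exponential factors are unimodular.

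The only delicate point I anticipate is the justification of interchanging limits for analyticity, that is, getting a domination uniform on compact subsets of $\overline{\mathbb{C}_+}$. This is handled by the bound $|e^{\pm 2ik(x-y)}|\le1$ on the correct side. Everything else is the routine Volterra argument.
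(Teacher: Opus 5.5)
Your proposal is correct and follows the paper's own route: Picard iteration of the Volterra equation with the factorial bound $\frac{1}{n!}\bigl(\int\rho\bigr)^n$, which gives an absolutely and uniformly convergent Neumann series. You also supply what the paper only sketches or cites from Zhao--Fan (uniqueness among bounded solutions, continuity in $x$, and continuity/analyticity in $k$ via $|e^{\pm 2ik(x-y)}|\le 1$ on the appropriate half-plane together with Morera and Weierstrass), and your constant $e^{2\|q\|_{L^1}}$ is safer than the paper's $e^{\sqrt{2}\|q\|_{L^1}}$, since $\int|Q(q(y))|\,dy$ need not equal $\sqrt{2}\int|q|$ when $r(y)=\sigma\overline{q(-y)}$.
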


\begin{proof}
In the proof, we sketch the proof of existence of solution $m^{(-)}(x,k)$ in \eqref{eq:m_1}  without giving all the details, since this lemma is straightforward adaptations of  \cite[Lemma 2,1]{zhao}.

One may solve the equation $m_1^{(-)}(x,k)$ by using iteration.
$$
m_{(n+1)}^{(-)}(x,k)=I+K_{q,k,-}[m_{(n)}^{(-)}(\cdot,k)](x).
$$
We make an inductive hypothesis:
\begin{equation*}
m_{(n)}^{(-)}(x,k)=\sum_{\ell=0}^nI_{\ell}(x,k).
\end{equation*}
A simple computation yields that $I_l(x,k)$ is the $\ell$-fold integral satisfying
$$
|I_{\ell}(x,k)|\le \frac{1}{\ell !}\left(\int_{-\infty}^x|Q(q(y)|\,dy\right)^{\ell}.
$$
Then we have that the infinite series
$$
m^{(-)}(x,k)=\sum_{\ell=1}^{\infty}I_{\ell}(x,k)
$$
converges absolutely, and
$$
|m^{(-)}(x,k)|\le e^{\int_{-\infty}^x|Q(q(y))|\,dy}=e^{\sqrt{2}\int_{-\infty}^x|q(y)|\,dy},
$$
which ultimately yields the existence of solution $m^{(-)}(x,k)$.
\end{proof}

We expand out the limit in \eqref{eq:boundary} and will have the following lemma.
The following lemma is due to see \cite[Lemma 2,2]{zhao} (see also \cite[pp.~1030]{deift}).

\begin{lemma}\label{lem:m-exist}
Let $m^{(\pm)}$ be solutions of \eqref{eq:volttera} obtained in Lemma $\ref{lem:m^+-}$ under the condition that $q\in L^1$.
For all $x\in\mathbb{R}$, we have
\begin{equation}\label{eq:m-limit1}
\lim_{\Im k\to\pm\infty}m_1^{(\pm)}(x,k)=e_1,\quad \lim_{\Im k\to\pm\infty}m_2^{(\mp)}(x,k)=e_2,
\end{equation}
respectively.
Moreover, if $q\in H^s$ with $s>1/2$, then for all $x\in\mathbb{R}$, we have
\begin{equation}\label{eq:m-limit2}
\lim_{\Im k\to\pm\infty}2ik
\begin{bmatrix}
m_1^{(\pm)}(x,k)-e_1 & m_2^{(\mp)}(x,k)-e_2
\end{bmatrix}
=
\begin{bmatrix}
\int_{\pm\infty}^xq(y)r(y)\,dy  & -q(x) \\
r(x) & -\int_{\pm\infty}^xq(y)r(y)\,dy
\end{bmatrix},
\end{equation}
respectively.
\end{lemma}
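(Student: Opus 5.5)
Both limits come from the Volterra equations \eqref{eq:m_1}--\eqref{eq:m_2} together with the uniform bound $|m^{(\pm)}(x,k)|\le e^{\sqrt2\|q\|_{L^1}}$ from the proof of Lemma \ref{lem:m^+-}. I treat $m_1^{(+)}$ with $\Im k\to+\infty$; the other cases are symmetric. The bound extends to the closed half-plane of analyticity because there the exponential factor has modulus at most one. In \eqref{eq:m_1} the second component is
$$m_{1,2}^{(+)}(x,k)=\int_{+\infty}^x e^{-2ik(x-y)}r(y)m_{1,1}^{(+)}(y,k)\,dy .$$
For $y>x$ and $\Im k>0$ we have $|e^{-2ik(x-y)}|=e^{-2\Im k\,(y-x)}\le 1$, and the factor tends to $0$ pointwise. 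Dominated convergence, with majorant $C|r(y)|\in L^1$, therefore gives $m_{1,2}^{(+)}(x,k)\to0$. For the first component, $m_{1,1}^{(+)}-1=\int_{+\infty}^x q\,m_{1,2}^{(+)}\,dy$. The integrand is bounded by $C|q|$ and tends to $0$ pointwise, so dominated convergence again gives the limit $0$. This proves \eqref{eq:m-limit1}.

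For \eqref{eq:m-limit2} I multiply by $2ik$, starting with the off-diagonal component. Since $r\in H^s\subset C\cap L^\infty$, I split the integral as
$$2ik\,m_{1,2}^{(+)}(x,k)=2ik\int_x^\infty e^{-2ik(y-x)}\,\bigl[r(y)m_{1,1}^{(+)}(y,k)\bigr]\,dy$$
(up to sign conventions). The kernel $2ik\,e^{-2ik(y-x)}\mathbf 1_{y>x}$ is an approximate identity concentrating at $y=x^+$ as $\Im k\to\infty$, provided $|k|/\Im k$ stays bounded, i.e.\ the limit is taken non-tangentially, for instance along $k=i\tau$. I would take this interpretation of the limit. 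If $r(y)m_{1,1}^{(+)}(y,k)$ is continuous at $y=x$ uniformly in $k$ and tends to $r(x)$, the integral tends to $r(x)$.

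Continuity of $r$ follows from $H^s\hookrightarrow C$. For $m_{1,1}^{(+)}$, its $y$-derivative is $q\,m_{1,2}^{(+)}$, so it is uniformly (in $k$) continuous in $y$ whenever $q\in L^\infty$. Combined with $m_{1,1}^{(+)}\to1$, this gives the needed uniform continuity. The remainder is estimated by
$$2|k|\int_x^\infty e^{-2\Im k(y-x)}\bigl|r(y)m_{1,1}^{(+)}(y)-r(x)\bigr|\,dy,$$
which is small by splitting into $|y-x|<\delta$ and $|y-x|\ge\delta$. The signs then produce the entry $r(x)$; for the second column the analogous computation gives $-q(x)$.

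The diagonal entry comes from
$$2ik\bigl(m_{1,1}^{(+)}-1\bigr)=\int_{+\infty}^x q(y)\,\bigl[2ik\,m_{1,2}^{(+)}(y,k)\bigr]\,dy .$$
The bracket tends pointwise to $r(y)$ by the previous step. To pass to the limit inside the integral I need a dominating bound $|2ik\,m_{1,2}^{(+)}(y,k)|\le C$ uniformly in $y$ and $k$. The approximate-identity estimate above gives exactly this: $\|2|k|e^{-2\Im k(\cdot)}\|_{L^1}\lesssim |k|/\Im k$, so the bracket is bounded by $C\|r\|_{L^\infty}$. The limit is then $\int_{+\infty}^x qr\,dy$, and the second column similarly gives $-\int qr$.

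The main obstacle is this uniform pointwise convergence of $2ik\,m_{1,2}^{(+)}$. It is where $s>1/2$ (continuity and boundedness of $q$) enters, and where the direction in which $\Im k\to\infty$ must be controlled.
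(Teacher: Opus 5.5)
Your route is genuinely different from the one the paper indicates. The paper only cites Zhao--Fan and Deift--Zhou and points to \eqref{eq:I-K}. That argument subtracts the candidate limit, applies $I-K_{q,k,\pm}$, and integrates by parts in $y$. The remainder is then controlled by $\int_{\pm\infty}^x e^{-2ik(x-y)}r_x(y)\,dy$ and $\int_{\pm\infty}^x e^{-2ik(x-y)}r(y)\int_{\pm\infty}^y qr\,dt\,dy$, both $O(|\Im k|^{-1/2})$ uniformly in $\Re k$, and the uniform bound on $(I-K_{q,k,\pm})^{-1}$ in $L^\infty_x$ finishes. It needs $r_x\in L^2$, i.e.\ $H^1$ rather than the stated $H^s$. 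In return it gives convergence uniform in $\Re k$ and the quantitative $L^2_k$ remainder that is used later in \eqref{eq:L_k^2}.

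Your approximate-identity argument, with dominated convergence for the diagonal entries, uses only $q\in C\cap L^\infty$, so it matches the stated hypothesis. The price is non-tangential limits, and that restriction is genuinely needed. The kernel has $L^1$ norm $|k|/\Im k$, and for $1/2<s<1$ one can build a compactly supported $r\in H^s$ from modulated bumps $a_n\phi(2^n u)e^{-2i\xi_n u}$ with $\xi_n\gg 2^n$, vanishing at $x$ and supported to its right. Tested at $k=\xi_n+i2^n$, this $r$ makes $2ik\,m^{(+)}_{1,2}(x,k)$ stay away from $r(x)$. Non-tangential limits are all that Section \ref{sec:rp} uses. Your proof of \eqref{eq:m-limit1} is fine, and is even uniform in $\Re k$, since your majorants depend only on $\Im k$.

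Where you must be more careful is the sentence ``the second column similarly gives $-\int qr$''. For the upper sign the second column is $m_2^{(-)}$, which is normalized at $-\infty$, and your own argument gives
\[
2ik\bigl(m^{(-)}_{2,2}(x,k)-1\bigr)=\int_{-\infty}^x r(y)\,2ik\,m^{(-)}_{2,1}(y,k)\,dy\longrightarrow-\int_{-\infty}^x q(y)r(y)\,dy ,
\]
because $2ik\,m^{(-)}_{2,1}\to-q$. The statement prints $-\int_{+\infty}^x qr\,dy$. The two differ by $\int_{\mathbb{R}}qr\,dy$, which is nonzero in general.

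A trace check shows which one is right. Since $a(k)=\det\bigl[m^{(+)}_1\ m^{(-)}_2\bigr]$, $2ik(a(k)-1)$ tends to the trace of the limit matrix. On the other hand, \eqref{eq:a(k)} gives $2ik(a(k)-1)=-\int_{\mathbb{R}}q(y)\,2ik\,m^{(+)}_{1,2}(y,k)\,dy\to-\int_{\mathbb{R}}qr\,dy$, whereas the printed matrix has trace $0$. So the $(2,2)$ entry of \eqref{eq:m-limit2} should read $-\int_{\mp\infty}^x qr\,dy$. Your proof establishes that corrected statement, and you should say so explicitly rather than leaving the endpoint unspecified.

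A smaller slip: your displayed formula has the wrong exponent. It should be $2ik\,m^{(+)}_{1,2}(x,k)=-2ik\int_x^\infty e^{2ik(y-x)}r(y)m^{(+)}_{1,1}(y,k)\,dy$; as written, $e^{-2ik(y-x)}$ grows for $\Im k>0$. It is $-2ik\int_0^\infty e^{2iku}\,du=1$ that produces $+r(x)$. Your later estimates already use the correct modulus $e^{-2\Im k(y-x)}$, so nothing else changes.
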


\begin{proof}
This  lemma is due to see \cite[Lemma 2,2]{zhao} (see also \cite[pp.~1030]{deift}).
See also \eqref{eq:I-K} below.
\end{proof}

We consider the regularity of $m_1^{(\pm)}(x,k)$ and $m_2^{(\pm)}(x,k)$ with respect to the variable $k$.

\begin{lemma}\label{lem:H^s}
Let $q\in L^{2,s}$ with $s>1/2$.
For all $x\in\mathbb{R}_{\pm}$, we have
\begin{equation}\label{eq:H^s-regularity}
m^{(\pm)}(x,k)-I \in H_k^s
\end{equation} 
and
\begin{equation}\label{eq:L^s-weight}
m^{(\pm)}(x,k)-I\in L^{\infty,s}_x(\mathbb{R}_{\pm},L^2_k(\mathbb{R})),
\end{equation}  
respectively.
Moreover, if $q\in H^{1,s}$ with $s>1/2$, then for all $x\in\mathbb{R}$ we have
\begin{equation}\label{eq:L_k^2}
2ik
\begin{bmatrix}
m_1^{(\pm)}(x,k)-e_1 & m_2^{(\mp)}(x,k)-e_2
\end{bmatrix}
-\begin{bmatrix}
\int_{\pm\infty}^xq(y)r(y)\,dy  & -q(x) \\
r(x) & -\int_{\pm\infty}^xq(y)r(y)\,dy
\end{bmatrix}\in L_k^2.
\end{equation}
\end{lemma}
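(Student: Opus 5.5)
We treat $m^{(\pm)}-I$ as a small perturbation of the first Neumann term and exploit the Fourier structure in $k$. We work with $m^{(-)}$ and $x\in\mathbb{R}_-$; the case of $m^{(+)}$ follows in the same way or from the symmetry \eqref{eq:equality}. Write \eqref{eq:volttera} as $(I-K_{q,k,-})(m^{(-)}-I)=K_{q,k,-}[I]$. The off-diagonal entries of $K_{q,k,-}[I]$ are
\[
\int_{-\infty}^x e^{\pm 2ik(x-y)}q(y)\,dy, \qquad \int_{-\infty}^x e^{\pm 2ik(x-y)}r(y)\,dy .
\]
In $k$, these are Fourier transforms of $q\chi_{(-\infty,x)}$ and $r\chi_{(-\infty,x)}$, up to a phase. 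By Plancherel, their $L^2_k$ norm is $c\|q\|_{L^2(-\infty,x)}$. Their $H^s_k$ norm is controlled by $\|\langle y\rangle^s q\|_{L^2(-\infty,x)}$, with the phase factor $e^{2ikx}$ contributing a factor $\langle x\rangle^s$ that is harmless for fixed $x$. Moreover, for $x\le0$ and $y<x$ we have $\langle y\rangle\ge\langle x\rangle$, so
\[
\langle x\rangle^{s}\|q\|_{L^2(-\infty,x)}\le \|q\|_{L^{2,s}},
\]
which gives the weighted bound \eqref{eq:L^s-weight} for the leading term. The diagonal entries of the second Neumann term are iterated integrals $\int_{-\infty}^x q(y)\int_{-\infty}^y e^{-2ik(y-z)}r(z)\,dz\,dy$. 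These are treated in the same way: by Minkowski's inequality their $L^2_k$ norm is at most $\|q\|_{L^1(-\infty,x)}\sup_y\|r\|_{L^2(-\infty,y)}$. Here we use $L^{2,s}\hookrightarrow L^1$ for $s>1/2$.

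Next we invert $I-K$. We view $K_{q,k,-}$ as an operator on $L^\infty_x((-\infty,x_0],L^2_k)$ and iterate as in Lemma~\ref{lem:m^+-}. Each application of $K$ costs at most the factor $\int_{-\infty}^x|Q|\,dy$, because the kernel has modulus at most $|Q|$ pointwise in $k$. Minkowski's inequality therefore gives the factorial bound $\frac{1}{\ell!}\bigl(\int_{-\infty}^x|Q|\bigr)^{\ell}$ for the $\ell$-th term measured in $L^2_k$. Summing the series yields $\|m^{(-)}(x,\cdot)-I\|_{L^2_k}\lesssim e^{\sqrt2\|q\|_{L^1}}\|q\|_{L^2(-\infty,x)}$, and the weight $\langle x\rangle^s$ can be carried through since $\langle y\rangle\ge\langle x\rangle$ on the integration region. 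This proves \eqref{eq:L^s-weight}.

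\textbf{The main obstacle is the $H^s_k$ regularity \eqref{eq:H^s-regularity}}, since $s$ is fractional and the multiplication by $m$ inside the integral is not a Fourier multiplier. I would use the difference characterization of $H^s$ from Section~\ref{sec:notations}: estimate $\int\!\!\int |f(k)-f(k')|^2|k-k'|^{-1-2s}\,dk\,dk'$. For the kernel term, write $e^{2ik(x-y)}-e^{2ik'(x-y)}$ and use $|e^{i\theta}-1|\le \min(2,|\theta|)$. Integrating over $k-k'$ produces the factor $|x-y|^{2s}\lesssim \langle x\rangle^{2s}\langle y\rangle^{2s}$, which is exactly where $q\in L^{2,s}$ enters; this holds for $0<s<1$. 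The difference $m(y,k)-m(y,k')$ is then handled by a Gronwall-type argument in the same Volterra norm. So one iterates the fractional-difference estimate in the space $L^\infty_x\bigl((-\infty,x_0],\dot H^s_k\bigr)$, using that $m$ is bounded uniformly in $k$.

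For \eqref{eq:L_k^2}, integrate by parts once in $y$ in the Volterra integrals; this is the step where $q\in H^1$ is used. Concretely,
\[
2ik\int_{-\infty}^x e^{2ik(x-y)}q(y)\,m_{2,2}(y,k)\,dy=-q(x)\,m_{2,2}(x,k)+\int_{-\infty}^x e^{2ik(x-y)}\partial_y\bigl(q\,m_{2,2}\bigr)\,dy .
\]
We then substitute $\partial_y m_{2,2}=r\,m_{2,1}$ from \eqref{eq:eq-m}. The remainder integrals are Fourier-type integrals of $q'$ and of $qr\,m$, which lie in $L^2_k$ by the first part of the argument. The boundary terms give $-q(x)$ and $\int_{-\infty}^x qr\,dy$ plus terms of the form $q(x)\bigl(m(x,k)-I\bigr)$, which lie in $L^2_k$ by \eqref{eq:L^s-weight}. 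This reproduces the limit matrix of Lemma~\ref{lem:m-exist} modulo $L^2_k$.
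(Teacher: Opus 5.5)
\textbf{Gap in the $H^s_k$ step.} The problem sits exactly where you locate the main obstacle. In the equation for $D_hm(x,k)=m(x,k+h)-m(x,k)$, the forcing is $(K_{q,k+h,-}-K_{q,k,-})[m(\cdot,k)]$, and you must split $m=I+(m-I)$. The $I$ part is the $k$-difference of the first Neumann term; Plancherel handles it, and this is where $q\in L^{2,s}$ enters. For the $m-I$ part, once you use $|e^{2ih(x-y)}-1|\le\min(2,2|h||x-y|)$ the oscillation in $k$ is gone. Minkowski in $y$ and then in $h$ leaves
\[
\int_{-\infty}^x |x-y|^s\,|Q(q(y))|\,\|m(y,\cdot)-I\|_{L^2_k}\,dy .
\]
Uniform boundedness of $m$ in $k$ gives no $L^2_k$ control here. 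Moving the weight onto $q$ via $|x-y|^s\lesssim\langle x\rangle^s\langle y\rangle^s$ also fails, for two reasons. First, $q\in L^{2,s}$ does not imply $\langle y\rangle^s q\in L^1$ (take $q=\langle y\rangle^{-s-3/4}$). Second, even where finite, the resulting bound grows like $\langle x\rangle^s$ as $x\to-\infty$, which is incompatible with the Gronwall iteration in $L^\infty_x((-\infty,x_0],\dot H^s_k)$ that you propose.

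You prove \eqref{eq:L^s-weight} first, which is the right order, but you never use it in this step, and that is the missing idea. For $y\le x\le 0$ one has $|x-y|\le|y|\le\langle y\rangle$. So the integral above is at most $\|Q(q)\|_{L^1}\sup_{y\le 0}\langle y\rangle^s\|m(y,\cdot)-I\|_{L^2_k}$, uniformly in $x\le 0$, and the $I$ part is bounded uniformly by $\||y|^s q\|_{L^2}$. This is how the paper closes the estimate: the decay of $\|m(y,\cdot)-I\|_{L^2_k}$ pays for the factor $|x-y|^s$ lost when the phase is differenced, leaving only $\|q\|_{L^1}$.

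\textbf{The other parts.} Your $L^2_k$ and weighted bounds coincide with the paper's: Plancherel for the source, the Volterra bound \eqref{eq:k^bound} on $L^\infty_xL^2_k$, and $\langle y\rangle\ge\langle x\rangle$ on the integration region.

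For \eqref{eq:L_k^2} you take a different route. You integrate by parts inside each Volterra integral. The paper instead writes a Volterra equation for the renormalized quantity $k(m_1^{(\pm)}-e_1)-\frac{1}{2i}\int_{\pm\infty}^{x}qr\,dy\,e_1-\frac{1}{2i}r\,e_2$, whose forcing is an explicit Fourier integral of $r_x$ and $r\int qr$, and then inverts $I-K_{q,k,\pm}$. Your route also works, but it produces the cross term $\int_{-\infty}^x e^{2ik(x-y)}q'(y)\bigl(m_{2,2}(y,k)-1\bigr)\,dy$. Since $q'$ is only in $L^2$, controlling this in $L^2_k$ requires $\|m(y,\cdot)-I\|_{L^2_k}$ to be square integrable in $y$. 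That again comes from \eqref{eq:L^s-weight} with $s>1/2$; the paper's renormalization avoids such terms.

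One further point about the statement itself: carried out carefully, either computation gives $-\int_{\mp\infty}^x qr\,dy$ in the $(2,2)$ slot for the column $m_2^{(\mp)}$, not $-\int_{\pm\infty}^x qr\,dy$ as printed. The two differ by the constant $\int_{\mathbb{R}}qr\,dy$, which is not in $L^2_k$.
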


\begin{proof}
First, we consider the proof of \eqref{eq:H^s-regularity} and \eqref{eq:L^s-weight} in term of $m_1^{\pm}$.
By \eqref{eq:volttera}, we have
\begin{equation*}
\begin{split}
(I-K_{q,k,\pm})[m_1^{(\pm)}(\cdot,k)-e_1](x)= & e_1-(I-K_{q,k,\pm})[e_1](x)\\
= & K_{q,k,\pm}[e_1](x)\\
= & \int_{\pm\infty}^xe^{-2ik(x-y)}r(y)\,dy \,e_2\\
= & \mp(2\pi)^{1/2}\mathscr{F}[r(x+\cdot)1_{\mathbb{R}_{\pm}}](-2k)\,e_2.
\end{split}
\end{equation*}
By virtue of \cite[Theorem 3.2]{deift}, we have
\begin{equation}\label{eq:k^bound}
\|(I-K_{q,k,\pm})^{-1}\|_{L_x^{\infty}L_k^2\to L_x^{\infty}L_k^2}\le e^{\|Q(q)\|}=e^{\sqrt{2}\|q\|_{L^1}}
\end{equation}
so that for $x\in \mathbb{R}_{\pm}$
\begin{equation}\label{eq:m_1L^2}
\begin{split}
\|m_1^{(\pm)}(x,k)-e_1\|_{L_k^2}&  \lesssim e^{\sqrt{2}\|q\|_{L^1}}\|r(x+\cdot)1_{\mathbb{R}_{\pm}}\|_{L^2_y}\\
& \lesssim e^{\sqrt{2}\|q\|_{L^1}}\langle x\rangle^{-s}\|q\|_{L_x^{2,s}},
\end{split}
\end{equation}
which proves $m_1^{\pm}(x,\cdot)-e_1\in L_k^2$ and \eqref{eq:L^s-weight} in the term of $m_1^{(\pm)}$.

Moreover, it follows from \eqref{eq:m_1} that
\begin{equation*}
\begin{split}
(I-K_{q,k+h,\pm})[m_1^{(\pm)}(\cdot,k+h)-m_1^{(\pm)}(\cdot,k)](x) = & (K_{q,h+k,\pm}-K_{q,h,\pm})[m_1^{(\pm)}(\cdot,k)-e_1](x)\\
& +\int_{\pm\infty}^x\left(e^{-2i(k+h)(x-y)}-e^{-2ik(x-y)}\right)r(y)\,dy\,e_2.
\end{split}
\end{equation*}
In a similar fashion to using \eqref{eq:k^bound}, we have 
\begin{equation}\label{eq:m_1-11}
\begin{split}
\|m_1^{(\pm)}(x,\cdot+h)-m_1^{(\pm)}(x,\cdot)\|_{L_k^2} \le  & e^{\sqrt{2}\|q\|_{L^1}}\|(K_{q,h+k,\pm}-K_{q,h,\pm})[m_1^{(\pm)}(\cdot,k)-e_1](x)\|_{L_k^2}\\
& +e^{\sqrt{2}\|q\|_{L^1}}\left\|\int_{\pm\infty}^x\left(e^{-2i(k+h)(x-y)}-e^{-2ik(x-y)}\right)r(y)\,dy\right\|_{L_k^2}.
\end{split}
\end{equation}
For the first term in \eqref{eq:m_1-11}, we have
\begin{equation*}
\begin{split}
(K_{q,h+k,\pm}-K_{q,h,\pm})[m_1^{(\pm)}(\cdot,k)-e_1](x)= & -(2\pi)^{1/2}\mathscr{F}[r(x+\cdot)m_{1,2}^{(\pm)}(x+\cdot,k)1_{\mathbb{R}_{\pm}}](-k-h) \,e_2\\
& +(2\pi)^{1/2} \mathscr{F}[r(x+\cdot)m_{1,2}^{(\pm)}(x+\cdot,k)1_{\mathbb{R}_{\pm}}](-k)\, e_2\\
= & (2\pi)^{1/2}ih\int_0^1\mathscr{F}_y[yr(x+y)m_{1,2}^{(\pm)}(x+y,k)1_{\mathbb{R}_{\pm}}(y)](-k-h\theta)\,d\theta\,e_2,
\end{split}
\end{equation*}
and thence
\begin{equation}\label{eq:m_1-1}
\begin{split}
|(K_{q,h+k,\pm}-K_{q,h,\pm})[m_1^{(\pm)}(\cdot,k)-e_1](x)|\lesssim   |h|\left|\int_{\mathbb{R}_{\pm}}|yr(x+y)m_{1,2}^{(\pm)}(x+y,k)|\,dy\right|.
\end{split}
\end{equation}
According to another calculation, we deduce that
\begin{equation}\label{eq:m_1-2}
\begin{split}
|(K_{q,h+k,\pm}-K_{q,h,\pm})[m_1^{(\pm)}(\cdot,k)-e_1](x)|\le   2\left|\int_{\mathbb{R}_{\pm}}|r(x+y)m_{1,2}^{(\pm)}(x+y,k)|\,dy\right|.
\end{split}
\end{equation}
Interpolating these estimates in \eqref{eq:m_1-1} and \eqref{eq:m_1-2}, we have
\begin{equation*}
\begin{split}
|(K_{q,h+k,\pm}-K_{q,h,\pm})[m_1^{(\pm)}(\cdot,k)-e_1](x)|\lesssim \left|\int_{\mathbb{R}_{\pm}}\min\{1,|hy|\}|r(x+y)m_{1,2}^{(\pm)}(x+y,k)|\,dy\right|.
\end{split}
\end{equation*}
We then take the $L_k^2(\mathbb{R})$ norm and obtain the following estimate for $x\in\mathbb{R}_{\pm}$;
\begin{equation}\label{eq:m_1-first}
\begin{split}
& \int_{\mathbb{R}}\frac{\|(K_{q,h+k,\pm}-K_{q,h,\pm})[m_1^{(\pm)}(\cdot,k)-e_1](x)\|_{L_k^2}^2}{|h|^{1+2s}}\,dh\\
&\lesssim  \int_{\mathbb{R}}\frac{1}{|h|^{1+2s}}\left(\int_{\mathbb{R}_{\pm}}\min\{1,|hy|\}|r(x+y)|\|m_1^{(\pm)}(x+y,\cdot)-e_1\|_{L^2_k}\,dy\right)^2\,dh\\
&\lesssim \int_{\mathbb{R}}\frac{1}{|h|^{1+2s}}\left(\int_{\mathbb{R}_{\pm}}\frac{\min\{1,|hy|\}}{\langle x+y\rangle^s}|r(x+y)|\,dy\right)^2\,dh\sup_{x\in \mathbb{R}_{\pm}}\langle x\rangle^{2s}\|m_1{(x,k)}-e_1\|_{L^2_k}^2\\
& \lesssim \left(\int_{\mathbb{R}_{\pm}}\left(\int_{\mathbb{R}}\frac{\min\{1,|hy|^2\}}{|h|^{1+2s}}\,dh\right)^{1/2}\frac{|r(x+y)|}{\langle x+y\rangle^s}\,dy\right)^2\sup_{x\in \mathbb{R}_{\pm}}\langle x\rangle^{2s}\|m_1{(x,k)}-e_1\|_{L^2_k}^2\\
& \lesssim \left(\int_{\mathbb{R}_{\pm}}|r(x+y)|\,dy\right)^2\sup_{x\in \mathbb{R}_{\pm}}\langle x\rangle^{2s}\|m_1{(x,k)}-e_1\|_{L^2_k}^2\\
& \lesssim \left(\|r\|_{L^{2,s}(\mathbb{R}_{\pm})}\sup_{x\in \mathbb{R}_{\pm}}\langle x\rangle^{s}\|m_1{(x,k)}-e_1\|_{L^2_k}\right)^2.
\end{split}
\end{equation}
To estimate the second term in \eqref{eq:m_1-11}, we observe that
$$
\int_{\pm\infty}^x\left(e^{-2i(k+h)(x-y)}-e^{-2ik(x-y)}\right)r(y)=-(2\pi)^{1/2}\mathscr{F}[r(x+\cdot)1_{\mathbb{R}_{\pm}}](-2(k+h))+(2\pi)^{1/2}\mathscr{F}[r(x+\cdot)1_{\mathbb{R}_{\pm}}](-2k).
$$
Then we get for $x\in\mathbb{R}_{\pm}$;
\begin{equation*}
\begin{split} \int_{\mathbb{R}}\frac{1}{|h|^{1+2s}}\left\|\int_{\pm\infty}^x\left(e^{-2i(k+h)(x-y)}-e^{-2ik(x-y)}\right)r(y)\right\|_{L^2_k}^2\,dh
& \lesssim  \| |y|^s r(x+y)1_{\mathbb{R}_{\pm}}(y)\|_{L^2_y}^2\\
& \lesssim \|r\|_{L^{2,s}(\mathbb{R}_{\pm})}^2.
\end{split}
\end{equation*}
Together with \eqref{eq:m_1L^2}  and \eqref{eq:m_1-first}, we obtain \eqref{eq:H^s-regularity} for the term $m_1^{(\pm)}(x,k)-I$.
The proof of estimates \eqref{eq:H^s-regularity} and \eqref{eq:L^s-weight} for the term of $m_2^{(\pm)}$ follows analogously. 

We next consider the estimates in \eqref{eq:L_k^2}.
Since $q\in H^{1,s}$ guarantees that for all $\xi\in\mathbb{R}$;
$$
2ik\int_{\pm\infty}^xe^{-2ik(x-y)}r(y)\,dy-r(x)=-\int_{\pm\infty}^xe^{-2ik(x-y)}r_x(y)\,dy
$$
(c.f. \cite[Corollary 8.10]{brezis}), we use the integration by parts on any intervals to obtain that
\begin{equation}\label{eq:I-K}
\begin{split}
& (I-K_{q,k,\pm})\left[ k(m_1^{(\pm)}(\cdot,k)-e_1)-\frac{1}{2i}\int_{\pm\infty}^{\cdot}q(y)r(y)\,dy\,e_1-\frac{1}{2i}r(\cdot)\,e_2\right](x)\\
& = \frac{1}{2i}\left(2ik\int_{\pm\infty}^xe^{-2ik(x-y)}r(y)\,dy-r(x)+\int_{\pm\infty}^xe^{-2ik(x-y)}r(y)\int_{\pm\infty}^yq(t)r(t)\,dt\right)\,e_2\\
& = \frac{1}{2i}\left(-\int_{\pm\infty}^xe^{-2ik(x-y)}r_x(y)\,dy+\int_{\pm\infty}^xe^{-2ik(x-y)}r(y)\int_{\pm\infty}^yq(t)r(t)\,dt\right)\,e_2.
\end{split}
\end{equation}
Recall by Plancherel's theorem, it follows that
$$
\sup_{x\in\mathbb{R}}\left\|-\int_{\pm\infty}^xe^{-2ik(x-y)}r_x(y)\,dy+\int_{\pm\infty}^xe^{-2ik(x-y)}r(y)\int_{\pm\infty}^yq(t)r(t)\,dt\right\|_{L^2_k}\lesssim \|r_x\|_{L^2}+\|r\|_{L^2}^3.
$$
Applying \eqref{eq:k^bound} to the above derives, we obtain \eqref{eq:L_k^2} for the term $m_1^{(\pm)}(x,k)-I$.
The proof of the estimate for the term $m_2^{(\pm)}(x,k)-I$ follows analogously. 
\end{proof}

In view of the proof of Lemma \ref{lem:H^s}, we modify the proof to show the following.

\begin{proposition}\label{prop:q-m-lipschitz}
Suppose $q\in L^{2,s}$ with $s>1/2$.
Then the mapping
$$
\begin{array}{ccc}
L^{2,s}                     &\longrightarrow& L^{\infty}_x(\mathbb{R}_{\pm},H^s_k(\mathbb{R}))\cap L_x^{\infty,s}(\mathbb{R}_{\pm}, L_k^2(\mathbb{R}))                    \\
\rotatebox{90}{$\in$}&               & \rotatebox{90}{$\in$} \\
 q                    & \longmapsto   & m^{(\pm)}-I
\end{array}
$$
is locally Lipschitz.
Moreover, if $q\in H^{1,s}(\mathbb{R})$, then the mapping
$$
\begin{array}{ccc}
H^{1,s}                     &\longrightarrow& L^{\infty}_x(\mathbb{R},L^2_k(\mathbb{R}))                    \\
\rotatebox{90}{$\in$}&               & \rotatebox{90}{$\in$} \\
 q                    & \longmapsto   & \widetilde{m^{(\pm)}}
\end{array}
$$
is locally Lipschitz, where
$$
\widetilde{m^{(\pm)}}
=
2ik
\begin{bmatrix}
m_1^{(\pm)}(x,k)-e_1 & m_2^{(\mp)}(x,k)-e_2
\end{bmatrix}
-
\begin{bmatrix}
\int_{\pm\infty}^xq(y)r(y)\,dy  & -q(x) \\
r(x) & -\int_{\pm\infty}^xq(y)r(y)\,dy
\end{bmatrix}.
$$
\end{proposition}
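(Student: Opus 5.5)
We would prove the local Lipschitz property by a difference argument that repeats the estimates of Lemma \ref{lem:H^s}, now applied to the difference of two potentials. Fix $q,\tilde q$ in a ball of radius $R$ in $L^{2,s}$; since $s>1/2$, this gives $\|q\|_{L^1},\|\tilde q\|_{L^1}\lesssim R$. Write $m=m^{(\pm)}(q)$, $\tilde m=m^{(\pm)}(\tilde q)$ and $Q=Q(q)$, $\tilde Q=Q(\tilde q)$. Subtracting the Volterra equations \eqref{eq:volttera} gives
\begin{equation*}
(I-K_{q,k,\pm})[m-\tilde m](x)=(K_{q,k,\pm}-K_{\tilde q,k,\pm})[\tilde m-I](x)+(K_{q,k,\pm}-K_{\tilde q,k,\pm})[I](x).
\end{equation*}
The difference $K_{q,k,\pm}-K_{\tilde q,k,\pm}$ is simply the operator $K$ with $Q$ replaced by $Q-\tilde Q$, which is linear in $q-\tilde q$. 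So the plan is to apply the resolvent bound \eqref{eq:k^bound}, with constant $e^{\sqrt2\|q\|_{L^1}}\le e^{CR}$, to this right-hand side.

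\textbf{Step 1: the weighted $L^2_k$ bound.} The source term $(K_{q}-K_{\tilde q})[I]$ is a truncated Fourier transform of $q-\tilde q$, exactly as in \eqref{eq:m_1L^2}. Its $L^2_k$ norm is therefore bounded by $\langle x\rangle^{-s}\|q-\tilde q\|_{L^{2,s}}$ for $x\in\mathbb{R}_\pm$. For the other term we use Minkowski and bound the $L^2_k$ norm by
\begin{equation*}
\int_{\mathbb{R}_\pm}|(q-\tilde q)(x+y)|\,\|\tilde m(x+y,\cdot)-I\|_{L^2_k}\,dy .
\end{equation*}
By \eqref{eq:L^s-weight} for $\tilde q$, this is at most $C(R)\langle x\rangle^{-s}\|q-\tilde q\|_{L^1}$. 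We will need to check that the weight $\langle x+y\rangle^{-s}\le\langle x\rangle^{-s}$ for $x,y$ of the same sign. Combining the two terms yields the $L^{\infty,s}_xL^2_k$ Lipschitz bound.

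\textbf{Step 2: the $H^s_k$ seminorm.} We difference in $k$ as in \eqref{eq:m_1-11}, now for $m-\tilde m$. The expression $(I-K_{q,k+h})[(m-\tilde m)(k+h)-(m-\tilde m)(k)]$ splits into three kinds of terms:
\begin{itemize}
\item $(K_{q,k+h}-K_{q,k})$ applied to $m-\tilde m$;
\item $(K_{q,k+h}-K_{\tilde q,k+h})$ applied to $\tilde m(k+h)-\tilde m(k)$;
\item the $h$-difference of $(K_q-K_{\tilde q})[\tilde m-I]$ and of the free term.
\end{itemize}
Each is handled by the $\min\{1,|hy|\}$ interpolation of \eqref{eq:m_1-1}--\eqref{eq:m_1-first}. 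Weighted against $|h|^{-1-2s}$, this produces factors $\|q\|_{L^{2,s}}\sup\langle x\rangle^s\|m-\tilde m\|_{L^2_k}$, already controlled by Step 1. The terms carrying $\tilde m(k+h)-\tilde m(k)$ are controlled by the $H^s_k$ bound of Lemma \ref{lem:H^s} times $\|q-\tilde q\|_{L^1}$.

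\textbf{Step 3: the $H^{1,s}$ statement.} We subtract the identities \eqref{eq:I-K} for $q$ and $\tilde q$. The right-hand sides differ by terms bounded in $L^2_k$, via Plancherel, by
\begin{equation*}
\|\partial_x(r-\tilde r)\|_{L^2}+(\|r\|_{L^2}+\|\tilde r\|_{L^2})^2\|r-\tilde r\|_{L^2}.
\end{equation*}
The operator difference $K_q-K_{\tilde q}$ applied to the bracket for $\tilde q$ is bounded using Minkowski and the $L^\infty_xL^2_k$ bound from \eqref{eq:L_k^2}. The remaining difference of the explicit terms $\int qr$, $q$, $r$ is pointwise $O(\|q-\tilde q\|_{H^1})$ in $x$ uniformly but constant in $k$. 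Here we must be careful: \eqref{eq:I-K} is arranged so that the bracket already contains these terms, so the Lipschitz estimate is applied to the whole bracket, and the issue is avoided.

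The main obstacle is Step 2, specifically the cross term involving $\tilde m(k+h)-\tilde m(k)$. In \eqref{eq:m_1-first} the $h$-integral was moved inside via Minkowski using only an $L^2_k$ bound on $m$. Here, instead, we need the $H^s_k$ seminorm of $\tilde m$ uniformly in $x\in\mathbb{R}_\pm$, paired with the $L^1$ norm of $q-\tilde q$. This is precisely what the first part of Lemma \ref{lem:H^s} supplies, and we expect it to close with constants depending only on $R$.
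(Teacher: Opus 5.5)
Your difference argument is exactly the modification of the proof of Lemma \ref{lem:H^s} that the paper invokes for this proposition (it gives no further detail), and it closes as you describe. One point of rigor: in Step 2, apply the Volterra resolvent bound \eqref{eq:k^bound} in $L^\infty_x$ with values in $L^2(|h|^{-1-2s}\,dh\,dk)$, where it holds verbatim by Minkowski, rather than separately for each $h$, because estimates of type \eqref{eq:m_1-first} control $\sup_x\int|h|^{-1-2s}\|\cdot\|_{L^2_k}^2\,dh$ and not $\int|h|^{-1-2s}\sup_x\|\cdot\|_{L^2_k}^2\,dh$. Also, for the second column in Step 3, which you leave implicit, the analogue of \eqref{eq:I-K} for $m_2^{(\mp)}$ forces the subtracted $(2,2)$ entry to be $-\int_{\mp\infty}^x q r\,dy$; with $-\int_{\pm\infty}^x q r\,dy$ as printed, that entry differs from an $L^2_k$ function by the $k$-independent constant $\mp\int_{\mathbb{R}} q r\,dy$, which is generically nonzero, so the statement must be read with this correction.
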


\section{Scattering problem}\label{sec:ist}

In this section, we investigate the inverse scattering transform for the equation \eqref{eq:nnls}.
We introduce the reflection coefficients, which are a nonlinear analogue of the Fourier transform of $q(t,x)$.

By the equation \eqref{eq:eq-m}, one sees the following:
\begin{equation}\label{eq:a-d}
\begin{split}
& \partial_x\det \begin{bmatrix} m_1^{(+)}(x,k) & m_2^{(-)}(x,k)\end{bmatrix}=0\\
& \partial_x\det \begin{bmatrix} m_1^{(-)}(x,k) & m_1^{(+)}(x,k)\end{bmatrix}=-2ik\det \begin{bmatrix} m_1^{(-)}(x,k) & m_1^{(+)}(x,k)\end{bmatrix},\\
& \partial_x\det \begin{bmatrix} m_2^{(+)}(x,k) & m_2^{(-)}(x,k)\end{bmatrix}=2ik\det \begin{bmatrix} m_2^{(+)}(x,k) & m_2^{(-)}(x,k)\end{bmatrix},\\
& \partial_x\det \begin{bmatrix} m_1^{(-)}(x,k) & m_2^{(+)}(x,k)\end{bmatrix}=0.
\end{split}
\end{equation}
Let $S(x,k)$ be a scattering matrix associated with the equation \eqref{eq:eq-m}:
$$
S(x,k)=
\begin{bmatrix}
a(k) & c(k)e^{2ikx} \\
b(k) e^{-2ikx} & d(k)
\end{bmatrix}
=e^{ikx \ad\sigma_3}
\begin{bmatrix}
a(k) & c(k) \\
b(k) & d(k)
\end{bmatrix},
$$
which being a continuous matrix function given by
\begin{equation}\label{eq:m-m}
m^{(+)}(x,k)=m^{(-)}(x,k)S(x,k).
\end{equation}
By Cramer's rule and \eqref{eq:a-d}, we find
\begin{equation*}
\begin{split} 
a(k) & =\det \begin{bmatrix} m_1^{(+)}(0,k) & m_2^{(-)}(0,k)\end{bmatrix},\\
b(k) & =\det \begin{bmatrix} m_1^{(-)}(0,k) & m_1^{(+)}(0,k)\end{bmatrix},\\
c(k) & =\det \begin{bmatrix} m_2^{(+)}(0,k) & m_2^{(-)}(0,k)\end{bmatrix},\\
d(k) & =\det \begin{bmatrix} m_1^{(-)}(0,k) & m_2^{(+)}(0,k)\end{bmatrix}.
\end{split}
\end{equation*}
From \eqref{eq:m-m}, we compute
\begin{equation}\label{eq:a-d=1}
a(k)d(k)-b(k)c(k)=1.
\end{equation}
On the other hand, application of \eqref{eq:m_1} and \eqref{eq:m_2} yields the following analogous formulas:
\begin{equation}\label{eq:a(k)}
\begin{split}
a(k) & 
=
\begin{bmatrix}
m_1^{(+)}(0,k)-e_1 & m_2^{(-)}(0,k)-e_2
\end{bmatrix}
+
\begin{bmatrix}
e_1 & m_2^{(-)}(0,k)-e_2
\end{bmatrix}
+
\begin{bmatrix}
m_1^{(+)}(0,k)-e_1 & e_2 
\end{bmatrix}
+1\\
& = 
\begin{cases}
1-\int_{\mathbb{R}}q(y)m_{1,2}^{(+)}(y,k)\,dy,\\
1+\int_{\mathbb{R}}r(y)m_{2,1}^{(-)}(y,k)\,dy,\\
m_{1,1}^{(+)}(-\infty,k),\\
m_{2,2}^{(-)}(+\infty,k).
\end{cases}
\end{split}
\end{equation}
\begin{equation}\label{eq:b(k)}
\begin{split}
b(k) & = 
\det\begin{bmatrix}
m_1^{(-)}(0,k)-e_1 & m_1^{(+)}(0,k)-e_1
\end{bmatrix}
+
\det\begin{bmatrix}
e_1 & m_1^{(+)}(0,k)-e_1
\end{bmatrix}
+\det
\begin{bmatrix}
m_1^{(-)}(0,k)-e_1 & e_1
\end{bmatrix}\\
& = 
\begin{cases}
-\int_{\mathbb{R}}e^{2iky}r(y)m_{1,1}^{(+)}(y,k)\,dy,\\
-\int_{\mathbb{R}}e^{2iky}r(y)m_{1,1}^{(-)}(y,k)\,dy,
\end{cases}
\end{split}
\end{equation}
\begin{equation}\label{eq:c(k)}
\begin{split}
c(k) & = 
\det\begin{bmatrix}
m_2^{(+)}(0,k)-e_2 & m_2^{(-)}(0,k)-e_2
\end{bmatrix}
+
\det\begin{bmatrix}
e_2 & m_2^{(-)}(0,k)-e_2
\end{bmatrix}
+\det
\begin{bmatrix}
m_2^{(+)}(0,k)-e_2 & e_2
\end{bmatrix}\\
& =
\begin{cases}
-\int_{\mathbb{R}}e^{-2iky}q(y)m_{2,2}^{(+)}(y,k)\,dy,\\
-\int_{\mathbb{R}}e^{-2iky}q(y)m_{2,2}^{(-)}(y,k)\,dy,
\end{cases}
\end{split}
\end{equation}
\begin{equation}\label{eq:d(k)}
\begin{split}
d(k) & =
\det\begin{bmatrix}
m_1^{(-)}(0,k)-e_1 & m_2^{(+)}(0,k)-e_2
\end{bmatrix}
+
\det\begin{bmatrix}
e_1 & m_2^{(+)}(0,k)-e_2
\end{bmatrix}
+\det
\begin{bmatrix}
m_1^{(-)}(0,k)-e_1 & e_2
\end{bmatrix}
+1\\
& =
\begin{cases}
1+\int_{\mathbb{R}}q(y)m_{1,2}^{(-)}(y,k)\,dy,\\
1-\int_{\mathbb{R}}r(y)m_{2,1}^{(+)}(y,k)\,dy,\\
m_{11}^{(-)}(+\infty,k),\\
m_{22}^{(+)}(-\infty,k).
\end{cases}
\end{split}
\end{equation}
Turing our attention to \eqref{eq:equality}, we notice that
\begin{equation}\label{eq:ad-relation}
a(k)  =\overline{a(-\overline{k})},\quad
d(k)  =\overline{d(-\overline{k})},\quad
b(k)  =\sigma \overline{c(-\overline{k})}.
\end{equation}

We consider the regularity of functions $a(k),~b(k),~c(k),~d(k)$.

\begin{lemma}\label{lem:a(k)-1}
Let $s>1/2$.
Suppose $q\in L^{2,s}$.
Then we have that
\begin{equation}\label{eq:a(k)-1}
S(0,\cdot) \in H^s_k.
\end{equation}
Moreover, if $q\in H^{1,s}$, then we have
\begin{equation}\label{eq:kb(k)}
b(k)\in L^{2,1}_k,
\end{equation}
that is, $b(k)\in X^s_k$.
\end{lemma}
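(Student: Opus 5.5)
The plan is to read each entry of $S(0,k)$ off the determinant formulas at $x=0$, namely
$a(k)=\det[m_1^{(+)}(0,k)\ m_2^{(-)}(0,k)]$ and its analogues for $b,c,d$. Writing each column as $e_j$ plus a correction, we expand every determinant as in \eqref{eq:a(k)}--\eqref{eq:d(k)}. Each entry then becomes a constant ($1$ or $0$), plus terms linear in the corrections $m^{(\pm)}(0,k)-I$, plus one quadratic term, a product of two such corrections. Here $H^s_k$ should be understood as ``$S(0,\cdot)-I\in H^s_k$'', since the constant $1$ in $a$ and $d$ is not in $L^2$.

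For the linear terms, since $x=0$ lies in both $\mathbb{R}_+$ and $\mathbb{R}_-$, estimate \eqref{eq:H^s-regularity} of Lemma \ref{lem:H^s} gives $m^{(\pm)}(0,\cdot)-I\in H^s_k$ for both signs. For the quadratic terms, $s>1/2$ means $H^s_k(\mathbb{R})$ is a Banach algebra, so the product of two corrections again lies in $H^s_k$. Hence $a-1,\ b,\ c,\ d-1\in H^s_k$, which proves \eqref{eq:a(k)-1}. Proposition \ref{prop:q-m-lipschitz} would make the same argument Lipschitz in $q$, if that is needed later.

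For \eqref{eq:kb(k)}, we already have $b\in L^2_k$, so it remains to show $kb(k)\in L^2_k$. We multiply $b(k)=\det[m_1^{(-)}(0,k)\ m_1^{(+)}(0,k)]$ by $2ik$ and use \eqref{eq:L_k^2} of Lemma \ref{lem:H^s} at $x=0$, which gives
$$2ik(m_1^{(\pm)}(0,k)-e_1)=\begin{bmatrix}\int_{\pm\infty}^0 qr\,dy\\ r(0)\end{bmatrix}+\rho^{\pm}(k),\qquad \rho^{\pm}\in L^2_k.$$
Consider the linear part $\det[e_1\ m_1^{(+)}-e_1]+\det[m_1^{(-)}-e_1\ e_1]=m^{(+)}_{1,2}(0,k)-m^{(-)}_{1,2}(0,k)$. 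Multiplied by $2ik$, it equals $r(0)-r(0)+\rho^+_2-\rho^-_2\in L^2_k$. The constants $r(0)$ cancel, and this cancellation is the key point.

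The quadratic part is $\det[m_1^{(-)}-e_1\ m_1^{(+)}-e_1]$. Multiplied by $2ik$, we put the factor $2ik$ on one column, which is bounded in $L^\infty_k$ (constant plus $\rho$), and bound the other column using $m_1^{(\mp)}(0,\cdot)-e_1\in H^s_k\subset L^2_k\cap L^\infty_k$. The difficulty is that $\rho$ is only known in $L^2_k$, not $L^\infty_k$. So we split the product as (constant)$\times L^2_k$ plus $\rho\times L^\infty_k$, and both pieces lie in $L^2_k$.

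The main obstacle I expect is justifying the cancellation rigorously. This requires checking that the constant terms in \eqref{eq:L_k^2} for $m_1^{(+)}$ and $m_1^{(-)}$ have the same second component $r(0)$, which the lemma gives. It also requires that the boundedness of the Volterra resolvent \eqref{eq:k^bound} can be invoked uniformly at $x=0$. Together with $b\in H^s_k$, these steps give $b\in X^s_k$.
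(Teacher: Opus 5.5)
Your proof is correct. The $H^s_k$ part is the paper's argument: expand the determinants at $x=0$ into constant, linear and quadratic pieces in $m^{(\pm)}(0,\cdot)-I$, then use \eqref{eq:H^s-regularity} and the algebra property of $H^s_k$ for $s>1/2$. The paper also estimates $a-1$ and $d-1$, as you assume.

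For $kb\in L^2_k$ you take a different route.

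\textbf{The paper's route.} It starts from the integral representation $b(k)=-\int_{\mathbb{R}}e^{2iky}r(y)m_{1,1}^{(+)}(y,k)\,dy$ of \eqref{eq:b(k)}. It peels off the explicit terms $(2\pi)^{1/2}k\mathscr{F}[r](-2k)$ and $\mathscr{F}[r\int_{+\infty}^{\cdot}qr](-2k)$, which Plancherel controls by $\|r_x\|_{L^2}+\|q\|_{L^2}^3$. The remainder is bounded by $\|q\|_{L^1}\sup_y\|k(m_{1,1}^{(+)}(y,k)-1)-\frac{1}{2i}\int_{+\infty}^y q(t)r(t)\,dt\|_{L^2_k}$. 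This needs \eqref{eq:L_k^2} uniformly in $y$, i.e.\ the $L^\infty_xL^2_k$ form in Proposition \ref{prop:q-m-lipschitz}, but only for the single Jost solution $m^{(+)}$.

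\textbf{Your route.} You work with the Wronskian $\det[m_1^{(-)}(0,k)\ m_1^{(+)}(0,k)]$ at the single point $x=0$ and invoke \eqref{eq:L_k^2} there for both $m_1^{(+)}$ and $m_1^{(-)}$. The linear part equals $m_{1,2}^{(+)}(0,k)-m_{1,2}^{(-)}(0,k)$, so the common boundary value $r(0)$ cancels. The quadratic part is handled by the split (constant)$\times L^2_k$ plus $L^2_k\times L^\infty_k$. This is valid because $m_1^{(+)}(0,\cdot)-e_1\in H^s_k\subset L^2_k\cap L^\infty_k$. You first call the column $2ik(m_1^{(-)}-e_1)$ bounded in $L^\infty_k$, which is not known, but your split corrects this.

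\textbf{Comparison.} Your version is purely algebraic at one point and parallels the first half of the lemma. The paper's version shows that, to leading order, $kb(k)\in L^2_k$ is the nonlinear counterpart of $r_x\in L^2$. The interaction with the Volterra solution enters only as an $O(\|q\|_{L^1})$ correction.
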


\begin{proof}
We record the following by \eqref{eq:a(k)}, \eqref{eq:b(k)}, \eqref{eq:c(k)}, \eqref{eq:d(k)}:
\begin{equation*}
\begin{split}
& \|a(k)-1\|_{H^s_k}\lesssim \|m_1^{(+)}(0,\cdot)-e_1\|_{H^s_k}\|m_2^{(-)}(0,\cdot)-e_2\|_{H^s_k}+ \|m_2^{(-)}(0,\cdot)-e_2\|_{H^s_k}+ \|m_1^{(+)}(0,\cdot)-e_1\|_{H^s_k},\\
& \|b(k)\|_{H^s_k}\lesssim \|m_1^{(-)}(0,\cdot)-e_1\|_{H^s_k}\|m_1^{(+)}(0,\cdot)-e_1\|_{H^s_k}+ \|m_1^{(+)}(0,\cdot)-e_1\|_{H^s_k}+ \|m_1^{(-)}(0,\cdot)-e_1\|_{H^s_k},\\
& \|c(k)\|_{H^s_k}\lesssim \|m_2^{(-)}(0,\cdot)-e_2\|_{H^s_k}\|m_2^{(+)}(0,\cdot)-e_2\|_{H^s_k}+ \|m_2^{(+)}(0,\cdot)-e_2\|_{H^s_k}+ \|m_2^{(-)}(0,\cdot)-e_2\|_{H^s_k},\\
& \|d(k)-1\|_{H^s_k}\lesssim \|m_1^{(-)}(0,\cdot)-e_1\|_{H^s_k}\|m_2^{(+)}(0,\cdot)-e_2\|_{H^s_k}+ \|m_1^{(-)}(0,\cdot)-e_1\|_{H^s_k}+ \|m_2^{(+)}(0,\cdot)-e_2\|_{H^s_k},
\end{split}
\end{equation*}
where we use the fractional Leibniz rules and Sobolev inequality $H^s_k\hookrightarrow L^{\infty}_k$ for $s>1/2$.
Thus, \eqref{eq:a(k)-1} follows by \eqref{eq:H^s-regularity}.

We turn to \eqref{eq:kb(k)}.
Repeating the analysis used in \cite[Lemma 2.7]{zhao} and \cite[Theorem 3.3]{deift}, by \eqref{eq:b(k)} we find 
\begin{equation}\label{eq:b-reduction}
\begin{split}
kb(k)= & -k\int_{\mathbb{R}}e^{2iky}r(y)\,dy-\frac{1}{2i}\int_{\mathbb{R}}e^{2iky}r(y)\int_{+\infty}^yq(t)r(t)\,dtdy\\
& - \int_{\mathbb{R}}e^{2iky}r(y)\left(k(m_{1,1}^{(+)}(y,k)-1)-\frac{1}{2i}\int_{+\infty}^yq(t)r(t)\,dt\right)\,dy\\
 = & (2\pi)^{1/2}k\mathscr{F}[r](-2k)-\frac{(2\pi)^{1/2}}{2i}\mathscr{F}\left[ r(\cdot)\int_{+\infty}^{\cdot} q(t)r(t)\,dt\right](-2k)\\
& - \int_{\mathbb{R}}e^{2iky}r(y)\left(k(m_{1,1}^{(+)}(y,k)-1)-\frac{1}{2i}\int_{+\infty}^yq(t)r(t)\,dt\right)\,dy.
\end{split}
\end{equation}
To estimate the contribution of the third term on the right-hand side of \eqref{eq:b-reduction} to $\|kb(k)\|_{L^2_k}$, we bound
$$
\|q\|_{L^1}\left\|k(m_{1,1}^{(+)}(y,k)-1)-\frac{1}{2i}\int_{+\infty}^yq(t)r(t)\,dt\right\|_{L_y^{\infty}(\mathbb{R},L^{2}_k(\mathbb{R}))},
$$
which has an acceptable bound, since by Proposition \ref{prop:q-m-lipschitz}.
For the contributions of the first and second terms on the right-hand side of \eqref{eq:b-reduction} to $\|kb(k)\|_{L^2_k}$, we bound
$$
\|\partial_x r\|_{L^2}+\left\|r(x)\int_{+\infty}^{x} q(y)r(y)\,dy\right\|_{L^2_x},
$$
which has a bound
$$
\|q\|_{H^1}+\|q\|_{L^2}^{3}\lesssim \|q\|_{H^1}\left(1+\|q\|_{L^2}^2\right).
$$
This completes the proof of $b(k)\in L^{2,1}_k$.
\end{proof}

By Lemma \ref{lem:m^+-}, $a(k)$ and $d(k)$ are analytic in $\mathbb{C}_-$ and $\mathbb{C}_+$, respectively.
We will need to bring a lower bound of $|a(k)|$ and $|d(k)|$ strictly above zero. 

\begin{lemma}\label{lem:ad-lower-bound}
Let $s>1/2$.
Suppose $q\in L^{2,s}$ with $\|q\|_{L^1}\ll 1$.
Then there exists $c_0>0$ such that
$$
\inf_{k\in\mathbb{C}_-}|a(k)|\ge c_0,\quad
\inf_{k\in\mathbb{C}_+}|d(k)|\ge c_0.
$$
\end{lemma}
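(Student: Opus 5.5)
The plan is to use the integral representations of $a$ and $d$ in \eqref{eq:a(k)} and \eqref{eq:d(k)} together with the uniform Volterra bound from Lemma \ref{lem:m^+-}, extended to the appropriate closed half-plane. There is a sign issue with the half-planes. By the paper's convention, $m_1^{(-)}$ and $m_2^{(+)}$ are analytic in $\mathbb{C}_-$, and $a$ is built from $m_1^{(+)}$ and $m_2^{(-)}$. I would therefore first check which half-plane each of $a$ and $d$ actually extends to. Whichever it is, the argument is the same: for $a$ use the columns that extend analytically to the relevant half-plane, and for $d$ likewise.

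Take the representation $a(k)=1-\int_{\mathbb{R}}q(y)m_{1,2}^{(+)}(y,k)\,dy$, or the alternative $d(k)=1+\int_{\mathbb{R}}q(y)m_{1,2}^{(-)}(y,k)\,dy$, depending on the half-plane.

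The key step is a uniform bound on the off-diagonal entry. For $k$ in the closed half-plane where the column is analytic, the exponential factor $e^{\mp 2ik(x-y)}$ in \eqref{eq:m_1} or \eqref{eq:m_2} has modulus at most $1$ on the integration range. This is exactly what makes the column extend there. Hence the iteration in the proof of Lemma \ref{lem:m^+-} gives the same bound
$$|m^{(\pm)}_j(x,k)|\le e^{\sqrt{2}\|q\|_{L^1}},$$
uniformly in $x$ and in $k$ in that closed half-plane.

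Feeding this into the off-diagonal component of the Volterra equation gives a sharper estimate:
$$|m_{1,2}^{(\pm)}(x,k)|\le \|r\|_{L^1}\sup|m_{1,1}|\le \|q\|_{L^1}e^{\sqrt2\|q\|_{L^1}},$$
and analogously for $m_{2,1}$.

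Substituting into the representation of $a$ yields
$$|a(k)-1|\le \|q\|_{L^1}^2e^{\sqrt2\|q\|_{L^1}},$$
and the same bound holds for $d$. Choosing $\|q\|_{L^1}$ small enough that the right-hand side is at most $1/2$ gives $c_0=1/2$. The hypothesis $q\in L^{2,s}$ is only needed to ensure $q\in L^1$, which holds since $s>1/2$.

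The main obstacle is bookkeeping rather than analysis. One must correctly match the analyticity half-plane of each column with the definitions of $a$ and $d$ and their stated domains, and confirm the factor $|e^{\pm2ik(x-y)}|\le1$ holds there. Continuity up to the real axis from Lemma \ref{lem:m^+-} then lets the infimum be taken over the closed half-plane as well.
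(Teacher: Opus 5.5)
Your proof is correct and takes essentially the paper's route. You bound the relevant Jost column uniformly on the closed half-plane where the Volterra kernel satisfies $|e^{\pm 2ik(x-y)}|\le 1$, then feed that into the integral representations \eqref{eq:a(k)} and \eqref{eq:d(k)}, which makes $|a-1|$ and $|d-1|$ small in $\|q\|_{L^1}$; your quadratic bound is slightly sharper than the paper's stated linear one. Your hedge on half-planes resolves as follows: $a$ extends to $\overline{\mathbb{C}_+}$ (through $m_1^{(+)}$ and $m_2^{(-)}$) and $d$ to $\overline{\mathbb{C}_-}$, so the half-planes in the statement are swapped, and the paper's own proof makes the same mix-up by using $m_2^{(+)}$ with $k\in\mathbb{C}_-$ in its bound for $a$.
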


\begin{remark}\label{rem:nosmall}
Let us give a remark on the case of defocusing nonlinear Schr\"odinger equation ($\sigma=1$ in \eqref{eq:nls}).
For the case the equation \eqref{eq:nls}, the corresponding $a(k),b(k),c(k),d(k)$ satisfy
$$
a(k)=\overline{d(k)},\quad b(k)=\sigma\overline{c(k)}
$$
for $k\in\mathbb{R}$.
By \eqref{eq:a-d=1}, we have $|a(k)|^2-|b(k)|^2=1$, which leads us to exchange $|a(k)|\ge 1$ automatically, without small data hypothesis.
\end{remark}

\begin{proof}
Note that $q\in L^{2,s}\hookrightarrow L^1$.
$\|(I-K_{q,k,\pm})^{-1}\|_{ L_x^{\infty}\to L_x^{\infty}}\le e^{\|Q(q)\|_{L^1}}$ follows once for $k\in \mathbb{C}_{\pm}$.
With \eqref{eq:m_2}, we may bound
$$
\|m_2^{\pm}(x,k)-e_2\|_{L_k^{\infty}(\mathbb{C}_{\mp},L_x^{\infty}(\mathbb{R}))}\le e^{\|Q(q)\|_{L^1}}\int_{\mathbb{R}}|q(x)|\,dx,
$$
for $k\in \mathbb{C}_{\mp}$.
In view of \eqref{eq:a(k)}, for sufficient small $\|q\|_{L^1}$ we get that there exists a constant $c_0>0$ such that for all $k\in\mathbb{C}_-$  
$$
|a(k)|\ge 1 -\int_{\mathbb{R}}|r(x)||m_2^{(+)}(x,k)-e_2|\,dx\ge 1-e^{\|Q(q)\|_{L^1}}\int_{\mathbb{R}}|q(x)|\,dx\ge c_0.
$$
The estimate for $|d(k)|$ follows analogously. 
\end{proof}

With \eqref{eq:m-m} in mind, we observe that
\begin{equation}\label{eq:r-t0}
\begin{split}
\frac{1}{a(k)}m_1^{(+)}(x,k)-m_1^{(-)}(x,k) & =\frac{b(k)}{a(k)}e^{-2ikx}m_2^{(-)}(x,k),\\
\frac{1}{d(k)}m_2^{(+)}(x,k)-m_2^{(-)}(x,k) &=\frac{c(k)}{d(k)}e^{2ikx}m_1^{(-)}(x,k).
\end{split}
\end{equation}
Let $r(r_1(k),r_2(k))=r(r_1[q](k),r_2[q](k))$ be a pair of reflection coefficient induced by the potential $q$ such that
$$
r_1(k)=\frac{b(k)}{a(k)},\quad r_2(k)=\frac{c(k)}{d(k)}.
$$
Note that $1/a(k)$ and $1/d(k)$ are referred to as transmission coefficients.

\begin{lemma}\label{lem:r-upper-bound}
Let $s>1/2$.
Suppose $q\in L^{2,s}$ and $\|q\|_{L^1}\ll 1$.
Then we have
$$
\|r_1(k)\|_{L^{\infty}_k},~
\|r_2(k)\|_{L_k^{\infty}}<1.
$$
Moreover
$$
r_1(k),~r_2(k)\in H^s_k.
$$
\end{lemma}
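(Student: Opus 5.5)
The plan is to split the statement into two parts: a pointwise sup bound on $r_1,r_2$, and $H^s$ regularity obtained from an algebra property of $H^s_k$ for $s>1/2$.

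\emph{Sup bound.} By \eqref{eq:b(k)},
$$
b(k)=-\int_{\mathbb{R}}e^{2iky}r(y)m_{1,1}^{(+)}(y,k)\,dy .
$$
For real $k$, Lemma \ref{lem:m^+-} gives $|m^{(+)}(y,k)|\le e^{\sqrt2\|q\|_{L^1}}$. Hence
$$
|b(k)|\le e^{\sqrt2\|q\|_{L^1}}\|q\|_{L^1}.
$$
The same bound holds for $|c(k)|$ via \eqref{eq:c(k)}, or via the symmetry $b(k)=\sigma\overline{c(-\overline{k})}$ from \eqref{eq:ad-relation}. The proof of Lemma \ref{lem:ad-lower-bound} shows $|a(k)|,|d(k)|\ge 1-e^{\sqrt2\|q\|_{L^1}}\|q\|_{L^1}$, and this passes to $k\in\mathbb{R}$ by continuity up to the boundary (Lemma \ref{lem:m^+-}). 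Therefore
$$
|r_j(k)|\le \frac{\varepsilon e^{\sqrt2\varepsilon}}{1-\varepsilon e^{\sqrt2\varepsilon}},\qquad \varepsilon=\|q\|_{L^1},
$$
which is $<1$ once $\varepsilon$ is small enough, say $\varepsilon e^{\sqrt2\varepsilon}<1/2$.

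\emph{Regularity.} Lemma \ref{lem:a(k)-1} gives $a-1$, $b$, $c$, $d-1\in H^s_k$. For $1/a$, write
$$
\frac1a-1=\frac{1-a}{a}.
$$
I will show $1/a$ is bounded, with $|1/a|\le c_0^{-1}$, and that it has the $H^s$ seminorm property. I plan to use the Gagliardo form
$$
\|f\|_{\dot H^s}^2\simeq\iint\frac{|f(k)-f(l)|^2}{|k-l|^{1+2s}}\,dk\,dl,
$$
recalled in Section \ref{sec:notations}, together with the identity
$$
\frac1{a(k)}-\frac1{a(l)}=\frac{a(l)-a(k)}{a(k)a(l)}.
$$
This gives $\|1/a\|_{\dot H^s}\le c_0^{-2}\|a-1\|_{\dot H^s}$. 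For the $L^2$ part, $|1/a-1|\le c_0^{-1}|a-1|$. So $1/a-1\in H^s_k$, and likewise $1/d-1\in H^s_k$.

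\emph{Conclusion.} Write $r_1=b+b\,(1/a-1)$. Since $s>1/2$, $H^s_k$ is an algebra (the fractional Leibniz rule plus $H^s\hookrightarrow L^\infty$, already used in Lemma \ref{lem:a(k)-1}), so $r_1\in H^s_k$, and $r_2\in H^s_k$ likewise.

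The only delicate point is that Lemma \ref{lem:ad-lower-bound} is stated on the open half-planes, while the bound is needed on $\mathbb{R}$. I would handle this by noting that the estimate in its proof uses only $\sup_x|m_2^{(\pm)}(x,k)-e_2|\le e^{\sqrt2\|q\|_{L^1}}\|q\|_{L^1}$, and this Volterra bound holds verbatim for real $k$, since $|e^{2ik(x-y)}|=1$ there. Everything else is routine.
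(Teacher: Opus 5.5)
Your proposal is correct and follows essentially the same route as the paper. The sup bound comes from $|b(k)|\le \|q\|_{L^1}\sup|m_{1,1}^{(+)}|$ together with $|a|,|d|\ge c_0$. The $H^s_k$ regularity comes from the Gagliardo form and the lower bound on $|a|$: the paper expands $\frac{b(k+h)-b(k)}{a(k+h)}-r_1(k)\frac{a(k+h)-a(k)}{a(k+h)}$ directly using $|r_1|<1$, while you pass through $1/a-1\in H^s_k$ and the algebra property, which is equivalent. Your explicit check that the Volterra bound, and hence $|a|,|d|\ge c_0$, holds for real $k$ is a welcome bit of care, since the paper only states Lemma \ref{lem:ad-lower-bound} on the open half-planes.
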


\begin{proof}
The proof of $r_1(k)$ and $r_2(k)$ is both essentially identical, so that we consider the proof for the term $r_1(k)$.
Proceeding as in the proof of Lemma \ref{lem:ad-lower-bound}, we are led to obtain the following:
$$
 \|m_1^{(\pm)}(x,k)-e_1\|_{L_k^{\infty}(\mathbb{C}_{\pm},L^{\infty}_x(\mathbb{R}))}\le e^{\|Q(q)\|_{L^1}}\int_{\mathbb{R}}|q(x)|\,dx,
$$
which implies
$$
\|m_1^{(\pm)}(x,k)\|_{L_k^{\infty}(\mathbb{C}_{\pm},L^{\infty}_x(\mathbb{R}))}\le 1+e^{\|Q(q)\|_{L^1}}\|q\|_{L^1}.
$$
We substitute \eqref{eq:b(k)} then
$$
|r_1(k)|\le \frac{1}{c_0}\left(1+e^{\|Q(q)\|_{L^1}}\|q\|_{L^1}\right)\|q\|_{L^1}<1,
$$
provided if $\|q\|_{L^1}\ll 1$.

Next we seek to show $r_1(k)\in H_k^s$.
It suffices to bound
\begin{equation*}
\|r_1\|_{L_k^2}^2+\int_{\mathbb{R}}\frac{\|r_1(k+h)-r_1(k)\|_{L_k^2}^2}{|h|^{1+2s}}\,dh.
\end{equation*}
By Lemmas \ref{lem:a(k)-1} and \ref{lem:ad-lower-bound}, the contribution of the first term is easily seen to be acceptable:
$$
 \|r_1\|_{L_k^2}^2\le \frac{\|b(k)\|_{L^2_k}^2}{c_0^2}.
$$
To estimate the contribution of the second term, we use again Lemmas \ref{lem:a(k)-1} and \ref{lem:ad-lower-bound} to bound
\begin{equation*}
\int_{\mathbb{R}}\frac{1}{|h|^{1+2s}}\left\|\frac{b(k+h)-b(k)}{a(k+h)}-r_1(k)\frac{a(k+h)-a(k)}{a(k+h)}\right\|_{L^2_k}^2dh
\lesssim \frac{\|b(k)\|_{H^s_k}^2+\|a(k)\|_{H^s_k}^2}{c_0^2},
\end{equation*}
which completes the proof.
\end{proof}

\begin{proposition}\label{prop:q-r_j}
Let $s>1/2$.
Suppose $q\in H^{1,s}$ and $\|q\|_{L^1}\ll1$.
Then we have
\begin{equation}\label{eq:r-H^1}
r_1(k),r_2(k)\in H^s_k\cap L^{2,1}_k,
\end{equation}
that is, $r_1(k),~r_2(k)\in X^s_k$.
Moreover, the map
\begin{equation}\label{eq:r-lipschitz}
\begin{array}{ccc}
H^{1,s}                     &\longrightarrow& X_k^s\times X_k^s                    \\
\rotatebox{90}{$\in$}&               & \rotatebox{90}{$\in$} \\
 q                    & \longmapsto   & (r_1 ,r_2)
\end{array}
\end{equation}
is locally Lipschitz.
\end{proposition}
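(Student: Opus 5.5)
We first establish the membership \eqref{eq:r-H^1}. The $H^s_k$ part is already contained in Lemma \ref{lem:r-upper-bound}, since $H^{1,s}\hookrightarrow L^{2,s}$. For the weighted part, Lemma \ref{lem:a(k)-1} gives $b\in L^{2,1}_k$. The symmetry \eqref{eq:ad-relation}, $b(k)=\sigma\overline{c(-\overline{k})}$, then transfers this to $c\in L^{2,1}_k$; alternatively, one repeats the computation \eqref{eq:b-reduction} with \eqref{eq:c(k)} and the $m_{2,2}^{(+)}$ part of \eqref{eq:L_k^2}. Lemma \ref{lem:ad-lower-bound} gives $|a|,|d|\ge c_0$ on $\mathbb{R}$, by continuity up to the boundary. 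Hence
$$
\|kr_1\|_{L^2_k}\le c_0^{-1}\|kb\|_{L^2_k},\qquad \|kr_2\|_{L^2_k}\le c_0^{-1}\|kc\|_{L^2_k},
$$
so $r_1,r_2\in X^s_k$.

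For the Lipschitz property \eqref{eq:r-lipschitz}, we fix $q,\tilde q\in H^{1,s}$ in a ball where $\|q\|_{L^1},\|\tilde q\|_{L^1}\ll1$. We write $a,b,c,d$ and $\tilde a,\tilde b,\tilde c,\tilde d$ for the corresponding coefficients. The first step is to show that $q\mapsto (a-1,b,c,d-1)$ is locally Lipschitz into $H^s_k$, and that $q\mapsto (kb,kc)$ is locally Lipschitz into $L^2_k$.

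For the $H^s_k$ statement, we use the determinant formulas \eqref{eq:a(k)}--\eqref{eq:d(k)}, which express each coefficient through $m^{(\pm)}(0,\cdot)-I$. We take differences and use multilinearity together with the algebra property of $H^s_k$ ($s>1/2$), exactly as in the proof of Lemma \ref{lem:a(k)-1}. The Lipschitz bound then follows from the first part of Proposition \ref{prop:q-m-lipschitz} at $x=0$.

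For $kb-k\tilde b$, we subtract the two copies of the identity \eqref{eq:b-reduction}. The first two terms are Fourier transforms of $r_x$ and of $r\int_{+\infty}^{\cdot}qr$, so they are Lipschitz in $L^2_k$ by Plancherel, with bound $\|q-\tilde q\|_{H^1}(1+\|q\|_{H^1}+\|\tilde q\|_{H^1})^2$. The third term is bilinear in $r$ and in the quantity
$$
k(m^{(+)}_{1,1}-1)-\tfrac{1}{2i}\int_{+\infty}^y qr .
$$
We split its difference into two pieces. The first piece is $(r-\tilde r)$ times the $L^\infty_yL^2_k$ bound of $\widetilde{m^{(+)}}$, controlled by $\|q-\tilde q\|_{L^1}\lesssim\|q-\tilde q\|_{L^{2,s}}$. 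The second piece is $\tilde r$ times the difference of the $\widetilde{m^{(+)}}$ terms, controlled by the second part of Proposition \ref{prop:q-m-lipschitz}. In both pieces we use Minkowski's inequality to move the $L^2_k$ norm inside the $y$-integral.

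The second step passes from the coefficients to the quotients, via
$$
r_1-\tilde r_1=\frac{b-\tilde b}{a}-\tilde r_1\,\frac{a-\tilde a}{a},
$$
and similarly for $r_2$.

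In $L^{2,1}_k$ this is immediate. We use $|a|\ge c_0$, $\|\tilde r_1\|_{L^\infty}<1$ (Lemma \ref{lem:r-upper-bound}), and $\|k(a-\tilde a)\tilde r_1\|_{L^2}\le \|a-\tilde a\|_{L^\infty}\|k\tilde r_1\|_{L^2}$. The sup-norm difference $\|a-\tilde a\|_{L^\infty}$ is controlled through $H^s_k\hookrightarrow L^\infty_k$.

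In $H^s_k$ we need $1/a-1\in H^s_k$ with Lipschitz dependence. We write $1/a-1=(1-a)/a$ and estimate the Gagliardo seminorm as in Lemma \ref{lem:r-upper-bound}, using
$$
\frac1{a(k+h)}-\frac1{a(k)}=-\frac{a(k+h)-a(k)}{a(k+h)a(k)},
$$
with the bound $|a|\ge c_0$. Then the algebra property of $H^s_k$ gives the Lipschitz bound in $H^s_k$ for the product $(b-\tilde b)\cdot\frac1a$, and for the term $\tilde r_1(a-\tilde a)/a$.

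The main obstacle I expect is the $L^{2,1}_k$ Lipschitz estimate for $kb$. Proposition \ref{prop:q-m-lipschitz} controls $\widetilde{m^{(+)}}$ only in $L^\infty_xL^2_k$, so the third term of \eqref{eq:b-reduction} must be handled through $L^1_y$ integrability of $r$ rather than through any $k$-weight. We also have to check that the uniform constant $e^{\sqrt2\|q\|_{L^1}}$ from \eqref{eq:k^bound} stays bounded on the ball, so that all constants are locally uniform.
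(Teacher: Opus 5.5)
Your proposal is correct and follows the paper's route: the $H^s_k$ part comes from Lemma~\ref{lem:r-upper-bound}, the $L^{2,1}_k$ part comes from $kb\in L^2_k$ (Lemma~\ref{lem:a(k)-1}) together with $|a|,|d|\ge c_0$ (Lemma~\ref{lem:ad-lower-bound}), and the Lipschitz property comes from differencing these same estimates, which the paper only asserts is ``easily verified.'' What you add are details the paper leaves out: the transfer from $b$ to $c$ via \eqref{eq:ad-relation}, the term-by-term Lipschitz bound for \eqref{eq:b-reduction} using the second part of Proposition~\ref{prop:q-m-lipschitz}, and the quotient identity $r_1-\tilde r_1=(b-\tilde b)/a-\tilde r_1(a-\tilde a)/a$ combined with $1/a-1\in H^s_k$.
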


\begin{proof}
The first part, $r_1(k),r_2(k)\in H^s_k$ follows from Lemma \ref{lem:r-upper-bound} .
Combining \eqref{eq:kb(k)} with Lemma \ref{lem:ad-lower-bound} implies $r_1(k),r_2(k)\in L^{2,1}_k$, which completes the proof of \eqref{eq:r-H^1}.
The Lipschitz continuity in \eqref{eq:r-lipschitz} is easily verified, by  commuting their proof.
\end{proof}

\section{Riemann-Hilbert problem}\label{sec:rhproblem}

In this section, we consider the Riemann-Hilbert problem on the real line.
Due to the structure of \eqref{eq:r-t0},  letting the two-by-two matrix $v=v(x,k)$ be the jump matrix on the real line such that
\begin{equation}\label{eq:C-R}
m_+(x,k)=m_-(x,k)v(x,k)\quad \text{for $k\in\mathbb{R}$}, 
\end{equation}
where
\begin{equation*}
\begin{split}
m_+(x,z)  & =
\begin{bmatrix}
\frac{m_1^{(+)}(x,z)}{a(z)} & m_2^{(-)}(x,z)
\end{bmatrix}
\quad \text{for $z\in\mathbb{C}_+$},\\
m_-(x,z) & =
\begin{bmatrix}
m_1^{(-)}(x,z) & \frac{m_2^{(+)}(x,z)}{d(z)}
\end{bmatrix}
\quad \text{for $z\in\mathbb{C}_-$}.
\end{split}
\end{equation*}
By Cramer's rule, the continuity condition yields that for $k\in\mathbb{R}$
\begin{equation}\label{eq:v(k)}
\begin{split}
v(x,k) & =
\begin{bmatrix}
 \frac{1}{a(k)d(k)}\det\begin{bmatrix} m_1^{(+)}(x,k) & m_2^{(+)}(x,k) \end{bmatrix}  & \frac{1}{d(k)}\det\begin{bmatrix} m_2^{(-)}(x,k) & m_2^{(+)}(x,k) \end{bmatrix}\\
\frac{1}{a(k)}\det\begin{bmatrix} m_1^{(-)}(x,k) & m_1^{(+)}(x,k) \end{bmatrix} & \det\begin{bmatrix} m_1^{(-)}(x,k) & m_2^{(-)}(x,k)  \end{bmatrix} 
\end{bmatrix}\\
& =
\begin{bmatrix}
\frac{1}{a(k)d(k)}  & -\frac{c(k)}{d(k)}e^{2ikx}\\
\frac{b(k)}{a(k)}e^{-2ikx} & 1
\end{bmatrix}\\
& = \begin{bmatrix}
1-r_1(k)r_2(k)  & -r_2(k)e^{2ikx}\\
r_1(k)e^{-2ikx} & 1
\end{bmatrix}\\
&  =  e^{ ikx\ad(\sigma)}v(0,k),
\end{split}
\end{equation}
where
\begin{equation*}
v(0,k)=
\begin{bmatrix}
1-r_1(k)r_2(k)  & -r_2(k)\\
r_1(k) & 1
\end{bmatrix}. 
\end{equation*}

\begin{remark}\label{rem:v(k)-mimic}
In a later section, we use the upper and lower factorization as following, respectively: 
$$
v(x,k)=\left(I-w_-(x,k)\right)^{-1}\left(I+w_+(x,k)\right),
$$
where
\begin{equation*}
\begin{split}
w_-(x,k) & =
\begin{bmatrix}
0 & -r_2(k)e^{2ikx} \\
0 & 0
\end{bmatrix},\\
w_+(x,k) & =
\begin{bmatrix}
0 & 0 \\
r_1(k)e^{-2ikx} & 0
\end{bmatrix}.
\end{split}
\end{equation*}
\end{remark}

With this observation, we are now able to take the Rieman-Hilbert problem.
For given $(r_1,r_2)\in X^s_k\times X^s_k$, letting $v(x,k)$ be given by \eqref{eq:v(k)}, namely $v(x,k)= e^{ ikx\ad(\sigma)}v(0,k)$.
For each $x\in\mathbb{R}$, we seek that a square matrix $M(x,z)$ of order two, which solves the following Hilbert-Riemann problem $(\mathbb{R},v)_{L^2}$ with respect to $z$:
\begin{itemize}
\item[(i)]
$M(x,z)$ is analytic in $\mathbb{C}\backslash\mathbb{R}$.
\item [(ii)]
$M_+(x,k)=M_-(x,k)v(x,k)$ for $k\in\mathbb{R}$, where $M_{\pm}(x,k)=\lim_{\mathbb{C}_{\pm}\ni z\to k}M(x,z)$, respectively.
\item[(iii)]
$\lim_{\mathbb{C}_{\pm}\ni z\to\infty}M_{\pm}(x,z)=I$.
\end{itemize}
We need to place the third conditional (iii) on the solution $M(x,z)$ to prove uniqueness.  

Let us fix the notation that we will use the paper.
Via an associated deformed Riemann-Hilbert problem,  we denote by $\mathscr{C}_v$ associated operator
$$
\mathscr{C}_{v(x,\cdot)}^{\pm}[h](k)=\mathscr{C}^{\pm}[h(\cdot)(v(x,\cdot)-I)](k)
$$
for $k\in\mathbb{R}$.
For simplicity, we will abbreviate $\mathscr{C}_{v(x,\cdot)}^{\pm}[h](k)$ to $\mathscr{C}_{v}^{\pm}[h](k)$, by omitting the variable $x$.

\begin{lemma}\label{lem:bijective}
Suppose $r_1,r_2\in H^s_k$ for $s>1/2$ with $\|r_1\|_{L^{\infty}_k}<1$ and $\|r_2\|_{L^{\infty}_k}<1$.
Then for all square matrices $F(k)\in L^2_k$ of order two, there exists a unique solution $($matrix$)$ $h_{\pm}(k)\in L^2_k$ such that
$$
(I\pm \mathscr{C}_v^{\pm})[h_{\pm}](k)=F(k),
$$
respectively.
\end{lemma}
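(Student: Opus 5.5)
The plan is a Fredholm alternative: show that $I\pm\mathscr{C}_v^{\pm}$ is Fredholm of index zero on $L^2_k$, then show that it is injective. Existence and uniqueness of $h_\pm$ for every $F\in L^2_k$ then follow at once. Throughout, $x$ is a fixed parameter. Since $s>1/2$, $r_1,r_2\in H^s_k\hookrightarrow C(\mathbb{R})\cap L^\infty$ and they tend to $0$ as $|k|\to\infty$. Hence $v(x,\cdot)-I$ is bounded, continuous and vanishes at infinity, and $\det v=1$ by \eqref{eq:a-d=1} and \eqref{eq:v(k)}. In particular $v^{-1}=\begin{bmatrix}1 & r_2e^{2ikx}\\ -r_1e^{-2ikx} & 1-r_1r_2\end{bmatrix}$ is also bounded. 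Boundedness of $\mathscr{C}^{\pm}_v$ on $L^2$ follows from the $L^2$ boundedness of $\mathscr{C}^\pm$.

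\emph{Step 1 (Fredholm, index zero).} Using $\mathscr{C}^+-\mathscr{C}^-=I$ from \eqref{eq:C-iden}, rewrite
$$
(I+\mathscr{C}^+_v)[h]=\mathscr{C}^+[hv]-\mathscr{C}^-[h],\qquad (I-\mathscr{C}^-_v)[h]=\mathscr{C}^+[h]-\mathscr{C}^-[hv].
$$
As a candidate inverse for the first operator, take $T[F]=\mathscr{C}^+[F]v^{-1}-\mathscr{C}^-[F]$, and analogously for the second. By the projection identities \eqref{eq:C-orth}, the composition $(I+\mathscr{C}^+_v)T-I$ reduces to a sum of commutators $[\mathscr{C}^\pm,\phi]$, where $\phi$ ranges over entries of $v-I$ and $v^{-1}-I$. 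The same holds for $T(I+\mathscr{C}^+_v)-I$. Each such $\phi$ lies in $C_0(\mathbb{R})$, and the commutator of the Cauchy projector with multiplication by a $C_0$ function is compact on $L^2$: approximate $\phi$ uniformly by smooth compactly supported functions, whose commutators have smooth kernels. So $T$ is a two-sided parametrix and the operator is Fredholm. For the index, replace $(r_1,r_2)$ by $(tr_1,tr_2)$ with $t\in[0,1]$. The hypotheses persist along the path ($\|tr_j\|_{L^\infty}<1$ and $\det v_t=1$), so the family stays Fredholm and depends norm-continuously on $t$. At $t=0$ the operator is the identity, so the index is $0$.

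\emph{Step 2 (injectivity via a vanishing lemma).} Suppose $(I+\mathscr{C}^+_v)[h]=0$ and set $M(z)=\mathscr{C}[h(v-I)](z)$. Then $M$ is analytic off $\mathbb{R}$, $M_\pm\in L^2$, $M(z)\to0$ as $z\to\infty$, and by the computation above $M_+=-h$ and $M_-=M_+v$. Thus $M$ solves the homogeneous Riemann–Hilbert problem and it suffices to show $M\equiv0$; the case of $I-\mathscr{C}^-_v$ is identical. Following Zhou's vanishing lemma, form the function $M(z)\,\overline{M(\bar z)}^{T}$, analytic in $\mathbb{C}_+$ and decaying like $|z|^{-2}$ in the mean. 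Cauchy's theorem gives
$$
\int_{\mathbb{R}}M_-(k)\,v(x,k)\,M_-(k)^*\,dk=0,
$$
and therefore
$$
\int_{\mathbb{R}}M_-(k)\,\frac{v(x,k)+v(x,k)^*}{2}\,M_-(k)^*\,dk=0.
$$
If $v+v^*$ is positive definite, this forces $M_-\equiv0$, hence $h=-M_+=-M_-v^{-1}\cdot v=0$.

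\emph{Main obstacle.} The difficulty is Step 2. In the local defocusing case $v$ is Hermitian and $v+v^*>0$ automatically, as in Remark~\ref{rem:nosmall}. In the nonlocal case there is no such symmetry: $r_2(k)=\sigma\overline{r_1(-\bar k)}\cdot\overline{a(-\bar k)}/d(k)$ relates $k$ to $-k$, not to $k$. One could try to exploit the symmetry \eqref{eq:ad-relation} with the matrix $\begin{bmatrix}0&1\\-\sigma&0\end{bmatrix}$ from \eqref{eq:equality} to pair $M(z)$ with the reflected function $M(-\bar z)$. I would first simply compute
$$
v+v^*=\begin{bmatrix}2\Re(1-r_1r_2) & -r_2e^{2ikx}+\overline{r_1e^{-2ikx}}\\ \ast & 2\end{bmatrix}.
$$
Its diagonal entries are positive because $|r_1r_2|<1$, and its determinant is $4\Re(1-r_1r_2)-|\overline{r_1}-r_2e^{2ikx}|^2$. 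Positivity of this determinant holds in the regime where the small-$L^1$ hypothesis is really used, via the bound $|r_j|\lesssim\|q\|_{L^1}$ from Lemma~\ref{lem:r-upper-bound}. If the bare condition $\|r_j\|_{L^\infty}<1$ turns out not to suffice, I would strengthen the hypothesis to $\|r_1\|_{L^\infty}+\|r_2\|_{L^\infty}$ small. With that in hand, invertibility also follows directly by a Neumann series, since $\|\mathscr{C}^\pm_v\|_{L^2\to L^2}\le\|v-I\|_{L^\infty}<1$.
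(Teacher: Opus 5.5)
Your route is essentially the paper's. You show $I\pm\mathscr{C}^\pm_v$ is Fredholm of index zero by a homotopy to $v=I$, then prove injectivity by a vanishing lemma that needs $\frac12(v+v^*)$ to be positive definite; the paper takes this positivity from \cite[Lemma 3.1]{zhao}. Your Step 1 is sound, and your two-sided parametrix built from commutators $[\mathscr{C}^\pm,\phi]$ with $\phi\in C_0$ is a clean version of the paper's compactness argument.

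The gap is in Step 2. You never prove that $v+v^*>0$ under the stated hypothesis $\|r_1\|_{L^\infty_k},\|r_2\|_{L^\infty_k}<1$. Instead you suggest strengthening it to smallness of $\|r_1\|_{L^\infty}+\|r_2\|_{L^\infty}$ and using a Neumann series, which proves a strictly weaker lemma than the one stated.

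Your doubt comes from an algebra slip in the determinant. Since $(v^*)_{12}=\overline{v_{21}}=\overline{r_1}e^{2ikx}$, the off-diagonal entry is $(v+v^*)_{12}=(\overline{r_1}-r_2)e^{2ikx}$, so the phase drops out of the modulus. Hence
\[
\det(v+v^*)=4\bigl(1-\Re(r_1r_2)\bigr)-|\overline{r_1}-r_2|^2=4-|r_1+\overline{r_2}|^2\ge 4-\bigl(|r_1|+|r_2|\bigr)^2>0,
\]
and also $\mathrm{tr}(v+v^*)=4-2\Re(r_1r_2)>0$. So $v(x,k)+v(x,k)^*$ is positive definite at every $k$ under exactly the bare hypothesis.

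Pointwise positivity, not a uniform bound, is enough. The trace of the integrand $M(v+v^*)M^*$ is nonnegative and integrates to zero, so it vanishes a.e., and then each row of $M$ vanishes a.e.

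There is also a minor convention slip that does not affect the conclusion. With $M_+=-h$ and $M_-=-hv=M_+v$, Cauchy's theorem for $M(z)\overline{M(\bar z)}^{T}$ gives $\int M_+M_-^*\,dk=\int M_+v^*M_+^*\,dk=0$, not $\int M_-vM_-^*\,dk=0$. After adding the adjoint you get $\int M_+(v+v^*)M_+^*\,dk=0$, hence $M_+=0$ and $h=-M_+=0$ directly.
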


\begin{proof}
For the proof, we refer to \cite{deift} and \cite[Proof of Lemma 3.2]{zhao}.
By direct computation, one finds that
\begin{equation*}
\begin{split}
\mathscr{C}_v^{\pm}\mathscr{C}_{v^{-1}}^{\pm}[h] & = \mathscr{C}^{\pm}\left[\mathscr{C}_{v^{-1}}^{\pm}[h](v-I)\right]\\
 & = \mathscr{C}^{\pm}\left[\left(\mathscr{C}^{\pm}[h(v^{-1}-I)]\right)(v-I)\right]\\
 & = \mathscr{C}^{\pm}\left[\left(\mathscr{C}^{\mp}[h(v^{-1}-I)]\right)(v-I)\right]\pm \mathscr{C}^{\pm}\left[h(v^{-1}-I)(v-I)\right]\\
 & = \mathscr{C}^{\pm}\left[\left(\mathscr{C}^{\mp}[h(v^{-1}-I)]\right)(v-I)\right]\mp \mathscr{C}_{v^{-1}}^{\pm}[h]\mp \mathscr{C}_v^{\pm}[h],
\end{split}
\end{equation*}
which yields
$$
(I\pm \mathscr{C}_v^{\pm})(I\pm \mathscr{C}_{v^{-1}}^{\pm})[h]=\mathscr{C}^{\pm}\left[\left(\mathscr{C}^{\mp}[h(v^{-1}-I)]\right)(v-I)\right]+h.
$$
Noting that $r_1,r_2\in H^{s}_k\subset C_k(\mathbb{R})\cap L^{\infty}_k(\mathbb{R})$, a standard argument based on Ascoli's theorem implies that the operator $\mathscr{C}^{\pm}\left[\left(\mathscr{C}^{\mp}[\cdot(v^{-1}-I)]\right)(v-I)\right]$ is compact in $L_k^2$.
Then $I\pm \mathscr{C}_v^{\pm}$ is Fredholm.
In particular, if $v=I$, then we have $I\pm \mathscr{C}_v^{\pm}=I$.
By the continuity argument, we see
$$
\textrm{ind}\,(I\pm \mathscr{C}_v^{\pm})=\textrm{ind}\, I=\dim\ker I-\mathrm{codim}\, R(I)=0-0=0.
$$
By Fredholm's theorem of the alternative, it suffices to verify $\dim\ker(I\pm \mathscr{C}_v^{\pm})=0$ in $L^2_k$.
 
Suppose that $g_{\pm}\in L^2_k$ satisfy $(I\pm \mathscr{C}_v^{\pm})g_{\pm}=0$ in $L^2_k$, respectively. 

Consider $\mathscr{C}[g_{\pm}(v-I)](z)$ the extension of $\mathscr{C}^{\pm}[g_{\pm}(v-I)](k)$ off $k\in \mathbb{R}$ to $z\in \mathbb{C}_{\mp}$ and letting $P_{\pm}(z)=\mathscr{C}[g_{\pm}(v-I)](z)(\mathscr{C}[g_{\pm}(v-I)](z))^*$, where $A^*$ denotes the Hermitian matrix of $A$.
Then by assumption, we may write
$$
P_\pm(k)=\mathscr{C}_v^{\mp}[g_{\pm}](k)(\mathscr{C}_v^{\pm}[g_{\pm}](k))^*=\left(g_{\pm}(k)+ [g_{\pm}(v-I)](k)\right)\left( g_{\pm}(k)\right)^*=g_{\pm}(k)v(x,k)\left( g_{\pm}(k)\right)^*
$$
for $k\in \mathbb{R}$.
Let $\Gamma_+(R,\varepsilon)$ be a semi-circle of radius $R>0$ in upper plane lying entirely upper the real axis $\Im z\ge \varepsilon$ centered at the origin, while $\Gamma_-(R,\varepsilon)$ be a semi-circle of radius $R>0$ in lower plane lying entirely lower the real axis $\Im z\le -\varepsilon$ centered at the origin.
Since $P_{\pm}(z)$ is analytic in $\mathbb{C}_{\mp}$, we have 
$$
\oint_{\Gamma_{\mp}(R,\varepsilon)}P_{\pm}(z)\,dz=O_{2\times 2},
$$
where $O_{2\times 2}$ is a null matrix, that is, every single entry in the matrix is exactly zero.
Since $v-I\in H^s_k$ and $g_{\pm}\in L^2_k$, we see
$$
\lim_{R\to \infty}\int_{C(R)_{\pm}}P_{\pm}(z)\,dz=O_{2\times 2},
$$
where $C(R)_{\pm}=\{z\in\mathbb{C}\mid \pm\Im z\ge 0,~|z|=R\}$.
Then
\begin{equation*}
O_{2\times 2}  =\int_{\mathbb{R}}P_{\pm}(k)\,dk
= \int_{\mathbb{R}}g_{\pm}(k)\left( g_{\pm}(k)\right)^*\,dk
= \int_{\mathbb{R}}g_{\pm}(k)v(x,k)\left( g_{\pm}(k)\right)^*\,dk.
\end{equation*}
By \cite[Lemma 3.1]{zhao}, due to $\|r_1\|_{L^{\infty}_k}<1$ and $\|r_2\|_{L^{\infty}_k}<1$, the real part of $w^*v(x,k)w$ is positive for every nonzero complex vector $w\in \mathbb{C}^2$.
Then taking the trace on both sides of the above matrix equation, we conclude $g_{\pm}=0$.
\end{proof}

\begin{corollary}\label{cor:existence-RH}
Let $s>1/2$.
Supposed $r_1,r_2\in H^s_k$ with $\|r_1\|_{L^{\infty}_k}<1$ and $\|r_1\|_{L^{\infty}_k}<1$.
Then for all $x\in\mathbb{R}$, there exists a unique solution $M_{\pm}(x,k)-I\in L^2_k$ to the Riemann-Hilbert problem $(\mathbb{R},v)_{L^2}$.
\end{corollary}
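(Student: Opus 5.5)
The plan is the standard Beals--Coifman reduction of the Riemann--Hilbert problem $(\mathbb{R},v)_{L^2}$ to a singular integral equation, followed by an application of Lemma \ref{lem:bijective}. We fix $x\in\mathbb{R}$ and look for the solution in the form
$$
M(x,z)=I+\mathscr{C}\left[\mu(x,\cdot)(v(x,\cdot)-I)\right](z),\qquad z\in\mathbb{C}\backslash\mathbb{R},
$$
where $\mu(x,\cdot)-I\in L^2_k$ is to be found.

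\textbf{Step 1 (entries of $v-I$).} By \eqref{eq:v(k)}, the entries of $v(x,k)-I$ are $-r_1r_2$, $-r_2e^{2ikx}$ and $r_1e^{-2ikx}$. Since $H^s_k\hookrightarrow L^\infty_k$ for $s>1/2$, each entry lies in $L^2_k\cap L^\infty_k$, with norm independent of $x$.

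\textbf{Step 2 (reduction to an integral equation).} By the Sokhotski--Plemelj relation \eqref{eq:C-iden}, the boundary values are $M_\pm=I+\mathscr{C}^\pm[\mu(v-I)]$. The jump condition $M_+=M_-v$ holds if we choose $\mu=M_-$. Indeed, then $M_+-M_-=\mu(v-I)$, so $M_+=\mu v=M_-v$. Therefore it suffices to solve
$$
\mu-\mathscr{C}^-_v[\mu]=I,
$$
that is, for $h=\mu-I$,
$$
(I-\mathscr{C}_v^-)[h]=\mathscr{C}^-[v-I].
$$
By Step 1 the right-hand side lies in $L^2_k$. Lemma \ref{lem:bijective} (the minus case) then gives a unique $h\in L^2_k$.

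\textbf{Step 3 (properties of $M$).} Set $M=I+\mathscr{C}[(I+h)(v-I)]$. Its density $(I+h)(v-I)$ is in $L^2_k$, because $v-I\in L^2_k\cap L^\infty_k$ and $h\in L^2_k$. Hence $M$ is analytic off $\mathbb{R}$, giving (i). Its boundary values satisfy $M_\pm-I=\mathscr{C}^\pm[\cdot]\in L^2_k$, and (ii) holds by Step 2.

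For (iii), the Cauchy integral of an $L^2$ density tends to $0$ as $z\to\infty$ nontangentially, or along $\Im z\to\pm\infty$. This follows from the Cauchy--Schwarz bound $|\mathscr{C}[f](z)|\lesssim \|f\|_{L^2}|\Im z|^{-1/2}$, together with a density argument for the general nontangential limit. So (iii) holds in the $L^2$ sense.

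\textbf{Step 4 (uniqueness, the main obstacle).} Suppose $\widetilde M$ is another solution with $\widetilde M_\pm-I\in L^2_k$.

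First we show that $\widetilde M-I$ is the Cauchy integral of its jump $\widetilde M_+-\widetilde M_-=\widetilde M_-(v-I)$. This is the $L^2$ Liouville-type statement: a function analytic off $\mathbb{R}$, with $L^2$ boundary values and decay at infinity, is the Cauchy transform of its jump. To justify it, subtract $\mathscr{C}[\widetilde M_-(v-I)]$. The difference has no jump across $\mathbb{R}$, so by Morera's theorem it is entire. It tends to $0$ at infinity, so by Liouville's theorem it vanishes. The care needed is to interpret the boundary values in the $L^2$ (Hardy-space) sense, so that Morera applies across $\mathbb{R}$.

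Once this representation holds, $\widetilde M_-=I+\mathscr{C}^-[\widetilde M_-(v-I)]$. Hence $\widetilde M_--I$ solves the same equation as $h$ in Step 2, and the injectivity from Lemma \ref{lem:bijective} gives $\widetilde M_-=\mu$. Then $\widetilde M=M$.

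Alternatively, uniqueness can be obtained without the representation step. Since $\det v=1$ by \eqref{eq:a-d=1}, $\det M\equiv 1$, so $M$ is invertible. Then $\widetilde M M^{-1}$ has no jump and tends to $I$, hence equals $I$. This route requires controlling $M^{-1}$, which follows from the same $L^2$ bounds.
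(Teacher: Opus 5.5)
Your proposal is correct and follows essentially the paper's argument. You apply Lemma \ref{lem:bijective} (minus case) with $F=\mathscr{C}^-[v(x,\cdot)-I]$ to obtain $M_--I\in L^2_k$, then set $M=I+\mathscr{C}[M_-(v-I)]$ (the paper's $M_+=I+\mathscr{C}^+_v[M_-]$) and use $\mathscr{C}^+-\mathscr{C}^-=I$ to get $M_+=M_-v$.

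The only real difference is that you actually argue uniqueness, via the Cauchy-integral representation of any solution followed by injectivity of $I-\mathscr{C}^-_v$, whereas the paper merely asserts that uniqueness follows from the normalization. Your caveat that uniqueness needs $M-I$ to lie in the Hardy (Cauchy-integral) class is exactly right: for $v=I$, the function $e^{-1/z^2}I$ would otherwise be a second solution, with $L^2_k$ boundary values a.e.
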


\begin{proof}
It suffices to show the existence of $m_{\pm}$ satisfying the Hilbert-Riemann problem $(\mathbb{R},v)_2$, because the uniqueness follows from the auxiliary condition $\lim_{\mathbb{C}_{\pm}\ni z\to\infty}M_{\pm}(x,z)=I$.

Noting that $v(x,k)-I\in L^2_k$.
Applying Lemma \ref{lem:bijective} with $F(k)=\mathscr{C}^-[v(x,\cdot)-I](k)\in L^2_k$, we see that there exists a unique $M_-(x,k)$ such that $M_-(x,k)-I\in L^2_k$ and $(I-\mathscr{C}_v^-)[M_-(x,\cdot)-I](k)= \mathscr{C}^-[v(x,\cdot)-I](k)$.

Noting that $M_-(x,k)(v(x,k)-I)\in L^2_k$ for all $x$, then $\mathscr{C}^+[M_-(x,\cdot)(v(x,\cdot)-I)](k)\in L^2_k$.
Letting $M_+(x,k)=I+\mathscr{C}^+_v[M_-(x,\cdot)](k)$, then we have
$$
M_+(x,k)=I+M_-(x,k)\left(v(x,k)-I\right)+\mathscr{C}^-[M_-(x,\cdot)(v(x,\cdot)-I)](k)=M_-(x,k)v(x,k).
$$
Moreover, in a similar way to the proof of Lemma \ref{lem:bijective}, the extension of $M_{\pm}(x,k)$ off $k\in\mathbb{R}$ to $\mathbb{C}_{\pm}$ is analytic function in $\mathbb{C}_{\pm}$, respectively.
\end{proof}

\begin{remark}\label{rem:M_+}
In the proof of Lemma \ref{lem:bijective}, the solution $M_-(x,k)$ was construct in the first step.
If one sets to work on finding $M_+(x,k)$ such that $(I+\mathscr{C}_{v^{-1}}^+)[M_+(x,\cdot)-I](k)=-\mathscr{C}^+[v(x,\cdot)^{-1}-I](k)$, then $M_-(x,k)=I-\mathscr{C}_{v^{-1}}^-[M_+(x,\cdot)](k)$, which satisfied
$$
M_-(x,k)=I+M_+(x,k)\left(v(x,k)^{-1}-I\right)-\mathscr{C}^{+}[M_+(x,\cdot)(v^{-1}(x,\cdot)-I)](k)=M_+(x,k)v(x,k)^{-1},
$$
from which follows that $M_+(x,k)=M_-(x,k)v(x,k)$.
\end{remark}

\begin{remark}
Lemma \ref{lem:bijective} shows that the operators $I\pm \mathscr{C}_v^{\pm}$ have a bounded inverse on $L^2_k$.
From the structural steps conjunction with \eqref{eq:v(k)}, we have that
\begin{equation}\label{eq:M_L^2-boound}
\|M_{\pm}(x,\cdot)-I\|_{L_x^{\infty}(\mathbb{R},L^2_k(\mathbb{R}))}\lesssim \|r_1\|_{L^2_k}+\|r_2\|_{L^2_k}.
\end{equation}
\end{remark}

By indicating entries of matrix as a subscript, we write
\begin{equation}\label{eq:MM}
M_{\pm}(x,k)=
\begin{bmatrix}
M_1^{(\pm)}(x,k) & M_2^{(\pm)}(x,k)
\end{bmatrix}
\end{equation}
and
$$
M_1^{(\pm)}(x,k)=
\begin{bmatrix}
M_{1,1}^{(\pm)}(x,k) \\
M_{1,2}^{(\pm)}(x,k)
\end{bmatrix},
\quad
M_2^{(\pm)}(x,k)=
\begin{bmatrix}
M_{2,1}^{(\pm)}(x,k) \\
M_{2,2}^{(\pm)}(x,k)
\end{bmatrix},
$$
respectively.

Setting
\begin{equation*}
M(x,k)=
\begin{bmatrix}
M_{1}^{(-)}(x,k)  & M_{2}^{(+)}(x,k) 
\end{bmatrix}.
\end{equation*}

\begin{lemma}\label{lem:M_1^+M_2^-}
Let $s>1/2$.
Supposed $r_1,r_2\in H^s_k$ with $\|r_1\|_{L^{\infty}_k}<1$ and $\|r_1\|_{L^{\infty}_k}<1$.
Then the solutions $M_{\pm}(x,k)$ obtained in Corollary $\ref{cor:existence-RH}$ satisfy
\begin{equation}\label{eq:M_H^s}
\begin{bmatrix}
M_{1}^{(-)}(x,k) & M_{2}^{(+)}(x,k)
\end{bmatrix} 
-I
\in L_x^{\infty,s}(\mathbb{R}_-,L^2_k(\mathbb{R})).
\end{equation}
\end{lemma}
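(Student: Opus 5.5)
The plan is to replace $M_\pm$ by a single function $\mu$ adapted to the triangular factorization of Remark \ref{rem:v(k)-mimic}, and to read off the weighted decay directly from the inhomogeneous term of the singular integral equation for $\mu$.

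\emph{Step 1: reduction to one function.} Put
$$
\mu(x,k)=M_-(x,k)\left(I-w_-(x,k)\right)^{-1}=M_+(x,k)\left(I+w_+(x,k)\right)^{-1},
$$
where the two expressions agree because $v=(I-w_-)^{-1}(I+w_+)$ and $M_+=M_-v$. The matrix $w_-$ has only a $(1,2)$ entry, so $(I-w_-)^{-1}=I+w_-$ and the first column of $M_-$ equals the first column of $\mu$. Likewise $w_+$ has only a $(2,1)$ entry, so the second column of $M_+$ equals the second column of $\mu$. Hence
$$
\begin{bmatrix}M_1^{(-)} & M_2^{(+)}\end{bmatrix}=\mu,
$$
and it suffices to show $\mu-I\in L_x^{\infty,s}(\mathbb{R}_-,L^2_k)$.

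\emph{Step 2: the integral equation for $\mu$.} Using \eqref{eq:C-iden} and the construction in Corollary \ref{cor:existence-RH}, $\mu$ solves the Beals--Coifman equation
$$
\mu-I=\mathscr{C}_w[I]+\mathscr{C}_w[\mu-I],\qquad \mathscr{C}_w[f]=\mathscr{C}^+[fw_-]+\mathscr{C}^-[fw_+].
$$
Therefore
$$
\|\mu(x,\cdot)-I\|_{L^2_k}\le \|(I-\mathscr{C}_w)^{-1}\|_{L^2_k\to L^2_k}\,\|\mathscr{C}_w[I]\|_{L^2_k}.
$$

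\emph{Step 3: decay of the source term.} The only nonzero entries of $\mathscr{C}_w[I]$ are $-\mathscr{C}^+[r_2(\cdot)e^{2i\cdot x}]$ and $\mathscr{C}^-[r_1(\cdot)e^{-2i\cdot x}]$. On the Fourier side, $\mathscr{C}^\pm$ are the projections onto a half-line of frequencies, and multiplication by $e^{\pm2ikx}$ translates frequencies by $\pm2x$. For $x\le0$ the signs combine so that each projection retains only the part of $\mathscr{F}[r_j]$ supported at $|\xi|\ge 2|x|$; checking that the signs really fit is exactly why the statement is restricted to $\mathbb{R}_-$. Then
$$
\|\mathscr{C}^{\pm}[r_je^{\mp2i\cdot x}]\|_{L^2_k}\le\left(\int_{|\xi|\ge 2|x|}|\mathscr{F}[r_j](\xi)|^2\,d\xi\right)^{1/2}\lesssim\langle x\rangle^{-s}\|r_j\|_{H^s_k}.
$$
So $\sup_{x\le0}\langle x\rangle^s\|\mathscr{C}_w[I]\|_{L^2_k}\lesssim\|r_1\|_{H^s}+\|r_2\|_{H^s}$.

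\emph{Step 4 (main obstacle): a resolvent bound uniform in $x$.} Lemma \ref{lem:bijective} gives invertibility of $I-\mathscr{C}_w$ (equivalently of $I-\mathscr{C}_v^-$) for each fixed $x$ via Fredholm theory, but Step 2 needs a bound uniform in $x$; this is presumably also what lies behind \eqref{eq:M_L^2-boound}. I would first try to make the kernel argument quantitative. Let $h=(I-\mathscr{C}_v^-)g$ and repeat the contour computation with $P(z)=\mathscr{C}[g(v-I)](z)\,\bigl(\mathscr{C}[g(v-I)](z)\bigr)^*$. This should yield
$$
\int_{\mathbb{R}}\Re\operatorname{tr}\bigl(g\,v\,g^*\bigr)\,dk\le C\,\|h\|_{L^2}\|g\|_{L^2},
$$
with $C$ depending only on $\|r_j\|_{L^\infty}$. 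By \cite[Lemma 3.1]{zhao}, $\Re\,w^*vw\ge c(1-\max_j\|r_j\|_\infty)|w|^2$, and this bound is independent of $x$ since the phases $e^{\pm2ikx}$ have modulus one. Hence $\|g\|_{L^2}\lesssim\|h\|_{L^2}$ uniformly in $x$. If the cross terms from $h$ in this identity turn out to be awkward, the fallback is a continuity and compactness argument in $x$: show $\|(I-\mathscr{C}_w)^{-1}\|\to1$ as $x\to-\infty$, using that $\mathscr{C}_w^2\to0$ strongly by Step 3. Combining Steps 1--4 gives $\sup_{x\le0}\langle x\rangle^s\|\mu(x,\cdot)-I\|_{L^2_k}<\infty$, which is \eqref{eq:M_H^s}.
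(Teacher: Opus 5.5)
Your proposal is correct and follows essentially the paper's route. Your $\mu$ is exactly the paper's $M=[M_1^{(-)}\ M_2^{(+)}]$, and your Beals--Coifman equation is the paper's identity \eqref{eq:M_pm}, since $(I-\mathscr{C}_v^-)[f(I-w_-)]=(I-\mathscr{C}_w)f$. The source term is bounded by the same Fourier half-line argument for $x\le 0$.

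The one place you go beyond the paper is Step 4. The paper only invokes boundedness of $(I-\mathscr{C}_v^-)^{-1}$ ``for all $x$'' without addressing uniformity in $x$. Your quantitative vanishing-lemma computation, using $\Phi_-=g-h$, $\Phi_+=gv-h$ and $\int\Phi_+\Phi_-^*\,dk=0$, does give $\|g\|_{L^2}\le c^{-1}(1+\|v\|_{L^\infty})\|h\|_{L^2}$ uniformly in $x$. So your main route closes the argument and the fallback is unnecessary.
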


\begin{proof}
We mimic the proof in \cite[Lemma 3.4]{deift} and \cite[Lemma 3.5]{zhao}.
In virtue of \eqref{eq:C-R}, we have
\begin{equation}\label{eq:M_+-M_-}
M_-(x,k)\left(v(x,k)-I\right) =M_+(x,k)\left(I-v(x,k)^{-1}\right).
\end{equation}
Using the composition of $M_{-}(x,k)$ in the proof of Corollary \ref{cor:existence-RH} and the additional explanation for $M_+(x,k)$ in Remark \ref{rem:M_+}, we have
\begin{equation*}
\begin{split}
& M(x,k)-I-\mathscr{C}^-[(M(x,\cdot)-I)w_+(x,\cdot)](k)-\mathscr{C}^+[(M(x,\cdot)-I)w_-(x,\cdot)](k)\\
=& 
\begin{bmatrix}
0 & -\mathscr{C}^+[r_2(\cdot)e^{2ix\cdot}](k) \\
\mathscr{C}^-[r_1(\cdot)e^{-2ix\cdot}](k) & 0
\end{bmatrix}
\end{split}
\end{equation*}
It is worth noting that by using the same argument in Remark \ref{rem:v(k)-mimic}, we have
$$
w_+(x,k)+w_-(x,k)=\left(I-w_-(x,k)\right)\left(v(x,k)-I\right).
$$
Then
\begin{equation}\label{eq:M_pm}
\begin{split}
\left(I-\mathscr{C}^-_v\right)\left[(M(x,\cdot)-I)\left(I-w_-(x,\cdot)\right)\right](k) & =
\left(I+\mathscr{C}^+_{v^{-1}}\right)\left[(M(x,\cdot)-I)\left(I+w_+(x,\cdot)\right)\right](k) \\
& = \begin{bmatrix}
0 & -\mathscr{C}^+[r_2(\cdot)e^{2ix\cdot}](k)\\
\mathscr{C}^-[r_1(\cdot)e^{-2ix\cdot}](k) & 0
\end{bmatrix}
\end{split}
\end{equation}
From the expression
\begin{equation*}
\begin{split}
& \mathscr{C}^-[r_1(\cdot)e^{-2ix\cdot}](k)=\frac{1}{\sqrt{2\pi}}\int_{2x}^{\infty}e^{ik(-2x+\eta)}\mathscr{F}_k[r_1](\eta)\,dt=\frac{1}{\sqrt{2\pi}}\int_{-\infty}^{0}e^{ik\eta}\mathscr{F}_k[r_1](\eta+2x)\,d\eta,\\
& \mathscr{C}^+[r_2(\cdot)e^{2ix\cdot}](k)=\frac{1}{\sqrt{2\pi}}\int_{-\infty}^{-2x}e^{ik(2x+\eta)}\mathscr{F}_k[r_2](\eta)\,d\eta=\frac{1}{\sqrt{2\pi}}\int_{0}^{\infty}e^{ik\eta}\mathscr{F}_k[r_2](\eta-2x)\,d\eta,
\end{split}
\end{equation*}
it follows that for $x\in\mathbb{R}_-$
\begin{equation*}
\begin{split}
& \|\mathscr{C}^-[r_1(\cdot)e^{-2ix\cdot}](\cdot)\|_{L^2_k}\lesssim \| \mathscr{F}_k[r_1](\eta+2x)1_{\mathbb{R}_-}(\eta)\|_{L^2_{\eta}}\lesssim \langle x\rangle^{-s}\|r_1\|_{H^s_k},\\
& \|\mathscr{C}^+[r_2(\cdot)e^{2ix\cdot}](\cdot)\|_{L^2_k}\lesssim \| \mathscr{F}_k[r_2](\eta-2x)1_{\mathbb{R}_+}(\eta)\|_{L^2_{\eta}}\lesssim \langle x\rangle^{-s}\|r_2\|_{H^s_k}.
\end{split}
\end{equation*}
As stated already, the operator $(I-\mathscr{C}_v^-)^{-1}$ is bounded in $L^2_k$ for all $x\in\mathbb{R}$ and $(I-w_-(x,k))^{-1}$ does so.
Consequently, we have
$$
\|M(x,\cdot)-I\|_{L^2_k}\lesssim \langle x\rangle^{-s}\left(  \|r_1\|_{H^s_k}+\|r_2\|_{H^s_k}\right)
$$
for $x\le 0$, which completes the proof of \eqref{eq:M_H^s}.
\end{proof}

It can be true that for $M_{\pm}(x,k)$ obtained in Corollary \ref{cor:existence-RH} and Remark \ref{rem:M_+},
\begin{equation*}
\partial_xM_{\pm}(x,k)\in C_x(\mathbb{R},L_k^{2}(\mathbb{R}))
\end{equation*}
this is because $\partial_x(e^{\pm2ikx}r_1(k))\in L^2_k$ implies $M_{\pm}(x,k)-I=\mathcal{C}^{\pm}[h(x,\cdot)](k)$ for some $h(x,k)\in C_x^1(\mathbb{R},L^{2}_k)$.

\begin{lemma}\label{lem:M-derivative}
Let $s>1/2$.
Supposed $r_1,r_2\in H^s_k$ with $\|r_1\|_{L^{\infty}_k}<1$ and $\|r_1\|_{L^{\infty}_k}<1$.
In addition, if $r_1,r_2\in X^s$, then
\begin{equation}\label{eq:H_dx}
\partial_xM(x,k)-ik\ad\sigma_3\left(M(x,k)\right)\in L_x^{\infty}(\mathbb{R},L^2_k(\mathbb{R})).
\end{equation}
\end{lemma}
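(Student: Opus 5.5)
We differentiate the integral equation \eqref{eq:M_pm} for $M(x,k)-I$ in $x$ and show that $\partial_x M - ik\ad\sigma_3(M)$ solves an equation of the same type with an $L^2_k$ right-hand side; the bounded inverse of $I-\mathscr{C}^-_v$ from Lemma \ref{lem:bijective} then gives the bound.

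The first step is to record how the data depend on $x$. Since $w_\pm(x,k)=e^{ikx\ad\sigma_3}w_\pm(0,k)$, we have
\[
\partial_x w_\pm=ik\ad\sigma_3(w_\pm)=ik[\sigma_3,w_\pm].
\]
Set $\mu=M-I$ and $D\mu=\partial_x\mu-ik\ad\sigma_3(\mu)$. Note that $\ad\sigma_3(I)=0$, so $D\mu=\partial_xM-ik\ad\sigma_3(M)$, which is exactly the quantity in \eqref{eq:H_dx}.

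The key algebraic step is to apply $D$ to the equation
\[
\mu-\mathscr{C}^-[\mu w_+]-\mathscr{C}^+[\mu w_-]=F,
\]
where $F$ is the off-diagonal matrix on the right of \eqref{eq:M_pm}. The operator $D$ does not commute with $\mathscr{C}^\pm$, because multiplication by $k$ does not. The commutator, however, has rank one:
\[
\mathscr{C}^\pm[kf](k)-k\,\mathscr{C}^\pm[f](k)=\frac{1}{2\pi i}\int f(s)\,ds,
\]
which is a constant in $k$. Using $\partial_x(\mu w)-ik\ad\sigma_3(\mu w)=(D\mu)w+\mu\,\partial_xw-ik[\sigma_3,\mu]w-ik\mu[\sigma_3,w]$ and the identity above, one obtains an equation of the form
\[
D\mu-\mathscr{C}^-[(D\mu)w_+]-\mathscr{C}^+[(D\mu)w_-]=DF+E(x),
\]
where $E(x)$ is a $k$-independent matrix. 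It is built from $\int\mu w_\pm\,dk$ and the constants $\frac{1}{2\pi i}\int k\,w_\pm\,dk$ arising from the $ik\sigma_3$ factors; these constants appear after commuting $\sigma_3$ through and using the off-diagonal structure of $w_\pm$.

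Next we check the right-hand side. For $DF$: the $(2,1)$ entry is $\partial_x\mathscr{C}^-[r_1e^{-2ix\cdot}]+2ik\,\mathscr{C}^-[r_1e^{-2ix\cdot}]$. This equals $-2i\,\mathscr{C}^-[(\cdot)r_1e^{-2ix\cdot}]+2ik\,\mathscr{C}^-[r_1e^{-2ix\cdot}]$, which by the commutator identity is the constant $-\frac{1}{\pi}\int r_1(s)e^{-2ixs}\,ds$. This constant is finite by Cauchy--Schwarz, since $r_1\in L^{2,1}\subset L^1$. The $(1,2)$ entry is analogous.

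The main obstacle is that these constants are not in $L^2_k$. We would handle this by subtracting the constant matrix explicitly. Put $G=D\mu-A(x)$, where $A(x)$ is the constant part. Then $G$ satisfies $(I-\mathscr{C}^-_v)$-type equations with right-hand side $A(x)w_+$-type terms plus $L^2$ terms. Here $\mathscr{C}^\pm[A w_\pm]\in L^2_k$ because $r_j\in L^2$, and $\int k\,w_\pm\,dk$ is finite only in the combined sense that $kr_j\in L^2$ together with $\mu w_\pm$. We use $\mu\in L^\infty_xL^2_k$ from \eqref{eq:M_L^2-boound} and $kr_j\in L^2_k$, so that $\int k\mu w\,dk$ is bounded by $\|\mu\|_{L^2_k}\|kr_j\|_{L^2_k}$. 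The remaining constant pieces should cancel thanks to the off-diagonal structure, since $\sigma_3$-conjugation flips their sign. Once the right-hand side lies in $L^\infty_xL^2_k$, invert $I-\mathscr{C}^-_v$ using Lemma \ref{lem:bijective}, which is uniform in $x$ because $v(x,\cdot)$ depends on $x$ only through unimodular phases. This gives $D\mu-A\in L^\infty_xL^2_k$.

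The last point is to verify that $A(x)$ vanishes, i.e.\ that $D\mu$ has no constant part. This should follow from the normalization (iii), since $\mu=\mathscr{C}^\pm[h]$ decays at infinity. We expect this bookkeeping of the constant terms to be the delicate part of the argument.
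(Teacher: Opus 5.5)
\textbf{The gap is your last step: $A(x)$ does not vanish, and no bookkeeping can make it vanish, because \eqref{eq:H_dx} as literally stated is false.}

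Normalization (iii) gives $\mu\to0$, not $k\mu\to0$. By your own commutator identity, $k\,\mathscr{C}^\pm[h](k)=\mathscr{C}^\pm[sh](k)-\frac{1}{2\pi i}\int h\,ds$, so $ik\ad\sigma_3(\mu)$ equals a $k$-independent matrix modulo $L^2_k$. Your computation $(DF)_{21}=-\frac1\pi\int r_1(s)e^{-2ixs}\,ds$ already shows this constant is generically nonzero. Carried out correctly, the constant is
\[
A(x)=\frac{1}{2\pi}\ad\sigma_3\int_{\mathbb{R}}M(x,k)\bigl(w_+(x,k)+w_-(x,k)\bigr)\,dk .
\]
Its off-diagonal entries are $-\frac1\pi\int M_{1,1}^{(-)}r_2e^{2ikx}\,dk$ and $-\frac1\pi\int M_{2,2}^{(+)}r_1e^{-2ikx}\,dk$. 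Up to the sign conventions of \eqref{eq:M_pm}, these are the potentials $q(x),r(x)$ reconstructed in \eqref{eq:qr-ex}.

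There is also a direct check. If $(r_1,r_2)$ come from a potential $q$, then $M$ is the Jost matrix $m^{(-)}$ (its columns are $M_1^{(-)}=m_1^{(-)}$ and $M_2^{(+)}=m_2^{(-)}$). Then \eqref{eq:eq-m} gives $\partial_xM-ik\ad\sigma_3(M)=Q(q(x))+Q(q(x))(M-I)$. The second term lies in $L^2_k$ by \eqref{eq:M_L^2-boound}. The first is a nonzero constant in $k$ whenever $q(x)\neq0$ or $q(-x)\neq0$.

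The paper's proof has the same defect. It derives $\partial_xM-ik\ad\sigma_3(M)=Q(q)M$ from equal jumps and uniqueness. It then claims $Q(q)M\in L^\infty_xL^2_k$ via \eqref{eq:M_L^2-boound}, but that estimate only controls $Q(q)(M-I)$. What is true, and provable, is $\partial_xM-ik\ad\sigma_3(M)-Q(x)\in L^\infty_xL^2_k$.

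Your route proves exactly this corrected statement, provided you identify $A(x)$ instead of trying to remove it, and fix the algebra. Your product rule has a spurious term. Since $Dw_\pm=\partial_xw_\pm-ik\ad\sigma_3(w_\pm)=0$ and $D$ is a derivation, one has exactly $D(\mu w_\pm)=(D\mu)w_\pm$. Also $D\,\mathscr{C}^\pm[g]=\mathscr{C}^\pm[Dg]+\frac{1}{2\pi}\ad\sigma_3\int g\,dk$.

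Set $\mathscr{C}_wh=\mathscr{C}^-[hw_+]+\mathscr{C}^+[hw_-]$. The identity preceding \eqref{eq:M_pm} then reads $(I-\mathscr{C}_w)\mu=\mathscr{C}_wI$, and applying $D$ gives $(I-\mathscr{C}_w)D\mu=A(x)$. Two consequences follow:
\begin{itemize}
\item No integrals $\int k\,w_\pm\,dk$ ever occur; they would diverge, since only $kr_j\in L^2$.
\item Nothing cancels: $\ad\sigma_3$ kills only diagonal parts, and $A(x)$ is off-diagonal.
\end{itemize}
Since $\mathscr{C}_w$ commutes with left multiplication by $k$-independent matrices, $A(x)(I+\mu)$ solves the same equation. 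Injectivity of $I-\mathscr{C}_w$ on $L^2_k$ then gives $D\mu=A(x)(I+\mu)$, i.e.\ $DM=A(x)M$. Hence $DM-A(x)=A(x)(M-I)\in L^\infty_xL^2_k$.

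Compared with the paper's Liouville-type argument (equal jumps, so $DM\,M^{-1}$ is entire and constant), your route is more self-contained. It also produces the reconstruction formula for the potential directly, rather than through the paper's appeal to \eqref{eq:volttera}.

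One further point: uniformity of $(I-\mathscr{C}_w)^{-1}$ in $x$ does not follow from the phases being unimodular. Multiplication by $e^{\pm2ikx}$ does not commute with $\mathscr{C}^\pm$. Under $\|r_j\|_{L^\infty}\ll1$, a Neumann series using $\|\mathscr{C}^\pm\|_{L^2\to L^2}=1$ does give it.
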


\begin{proof}
Noting that
$$
\partial_xv(x,k)=\partial_x(w_+(x,k)+w_-(x,k))=-2ik(w_+(x,k)-w_-(x,k))=ik\left[\sigma_3,v(x,k)\right]=ik\ad\sigma_3(v(t,x)),
$$
and
\begin{equation*}
\begin{split}
&\ad\sigma_3(M_-(x,k))v(x,k)-\ad\sigma_3(M_+(x,k)) \\
& = \sigma_3M_+(x,k)-M_-(x,k)\sigma_3v(x,k)-\sigma_3M_+(x,k)+M_-(x,k)v(x,k)\sigma_3\\
& = -M_-(x,k)\ad\sigma_3(v(x,k)),
\end{split}
\end{equation*}
that is,
$$
ik\ad\sigma_3(M_+(x,k))-ik\ad\sigma_3(M_-(x,k))v(x,k)=M_-(x,k)\partial_xv(x,k).
$$
Differentiating $M_+(x,k)=M_-(x,k)v(x,k)$ with respect to $x$, we then have
\begin{equation*}
\partial_xM_+(x,k)-ik\ad\sigma_3\left(M_+(x,k)\right)=\left(\partial_xM_-(x,k)-ik\ad\sigma_3\left(M_-(x,k)\right)\right)v(x,k).
\end{equation*}
This should be compared to \eqref{eq:volttera}, nothing the uniqueness of Riemann-Hilbert problem $(\mathbb{R},v)_{L^2}$ and Lemma \ref{lem:m^+-}.
Consequently, we have
\begin{equation*}
\partial_xM(x,k)-ik\ad\sigma_3\left(M(x,k)\right)=Q(q)M(x,k).
\end{equation*}
By \eqref{eq:M_L^2-boound}, the right-hand side term can be controlled in $L_x^{\infty}(\mathbb{R},L^2_k(\mathbb{R}))$, which completes the proof of \eqref{eq:H_dx}.
\end{proof}

Collecting earlier results as arguments above, we obtain the following proposition.

\begin{proposition}\label{prop:M_1^+M_2^-}
Let $1/2$.
Suppose $r_1,r_2\in X^s_k$ with $\|r_1\|_{L^{\infty}_k}<1$ and $\|r_2\|_{L^{\infty}_k}<1$.
Then the maps
\begin{equation*}
\begin{array}{ccc}
X_k^s\times X_k^s                      &\longrightarrow& L_x^{\infty,s}(\mathbb{R}_-,L^2_k(\mathbb{R}))                   \\
\rotatebox{90}{$\in$}&               & \rotatebox{90}{$\in$} \\
 (r_1 ,r_2)                    & \longmapsto   & M(x,k)-I
\end{array}
\end{equation*}
and
\begin{equation*}
\begin{array}{ccc}
X_k^s\times X_k^s                      &\longrightarrow&  L_x^{\infty}(\mathbb{R},L^2_k(\mathbb{R}))                  \\
\rotatebox{90}{$\in$}&               & \rotatebox{90}{$\in$} \\
 (r_1 ,r_2)                    & \longmapsto   & \partial_xM(x,k)-ik\ad\sigma_3\left(M(x,k)\right)
\end{array}
\end{equation*}
are locally Lipschitz.
\end{proposition}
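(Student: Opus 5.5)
The plan is to take a second pair $(\tilde r_1,\tilde r_2)$ in a small $X^s_k\times X^s_k$ ball around $(r_1,r_2)$, still with sup-norms below one, and write $\tilde v$, $\tilde w_\pm$, $\tilde M$ for the corresponding objects. We then estimate differences using the resolvent identity for the singular integral operators. First we need uniform invertibility. The bounded inverse of $I-\mathscr{C}^-_v$ on $L^2_k$ from Lemma \ref{lem:bijective} depends on $v$ only through $v-I$ in $L^\infty_k$, since $\|\mathscr{C}^-_v-\mathscr{C}^-_{\tilde v}\|_{L^2\to L^2}\lesssim\|v-\tilde v\|_{L^\infty_k}\lesssim\|r-\tilde r\|_{H^s_k}$ (via $H^s_k\hookrightarrow L^\infty_k$). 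A Neumann series perturbation then gives $\sup_x\|(I-\mathscr{C}^-_{\tilde v})^{-1}\|_{L^2\to L^2}\le C$ uniformly on the ball. This uses that $v(x,k)=e^{ikx\ad\sigma_3}v(0,k)$, so $x$ enters only through unimodular phases and never affects the $L^\infty_k$ norms. Likewise $(I-w_-)^{-1}=I+w_-$ is bounded uniformly.

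For the first map we start from \eqref{eq:M_pm}. Subtracting the two identities gives
\begin{equation*}
(I-\mathscr{C}^-_v)\bigl[(M-\tilde M)(I-w_-)\bigr]=F-\tilde F+(\mathscr{C}^-_v-\mathscr{C}^-_{\tilde v})\bigl[(\tilde M-I)(I-\tilde w_-)\bigr]+(I-\mathscr{C}^-_v)\bigl[(\tilde M-I)(w_--\tilde w_-)\bigr],
\end{equation*}
where $F$ is the off-diagonal right side of \eqref{eq:M_pm}. Since $F$ is linear in $(r_1,r_2)$, the proof of Lemma \ref{lem:M_1^+M_2^-} gives $\|F-\tilde F\|_{L^2_k}\lesssim\langle x\rangle^{-s}\|r-\tilde r\|_{H^s_k}$ for $x\le0$. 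The remaining two terms are bounded by $\|r-\tilde r\|_{L^\infty_k}\|\tilde M-I\|_{L^2_k}$, and Lemma \ref{lem:M_1^+M_2^-} controls $\|\tilde M-I\|_{L^2_k}$ by $\langle x\rangle^{-s}$. Inverting with the uniform bound and using $(I-w_-)^{-1}=I+w_-$ yields $\|M-\tilde M\|_{L^2_k}\lesssim\langle x\rangle^{-s}\|r-\tilde r\|_{H^s_k}$ on $\mathbb{R}_-$, which is the Lipschitz bound in $L^{\infty,s}_x(\mathbb{R}_-,L^2_k)$.

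For the second map we use the identity obtained in the proof of Lemma \ref{lem:M-derivative}, $\partial_xM-ik\ad\sigma_3(M)=Q(q)M$. Here $q$ is the potential recovered from the $1/k$ coefficient, $q(x)=\lim 2ik\,M_{1,2}$-type entries, so it is more robust to work directly with $N=\partial_xM-ik\ad\sigma_3(M)$. Differentiating \eqref{eq:M_pm} in $x$ and using $\partial_xv=ik\ad\sigma_3(v)$ shows that $N$ solves the same type of singular integral equation, $(I-\mathscr{C}^-_v)[N(I-w_-)]=G$. The inhomogeneity $G$ collects $x$-derivatives of $\mathscr{C}^\pm[r_je^{\mp2ix\cdot}]$ minus $ik\ad\sigma_3$ of them, and commutator terms of the form $\mathscr{C}^\pm[(M-I)\,k\,w_\pm]-k\,\mathscr{C}^\pm[(M-I)w_\pm]$. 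The first kind reduce to $\mathscr{C}^\pm[k\,r_j e^{\mp2ix\cdot}]$ and are bounded by $\|r_j\|_{L^{2,1}_k}$. For the second kind we use the identity $\mathscr{C}^\pm[kf]-k\mathscr{C}^\pm[f]=-\frac{1}{2\pi i}\int f\,dk$. Since $\int (M-I)w_\pm\,dk$ is bounded by $\|M-I\|_{L^2_k}\|r\|_{L^2_k}$, this term is a bounded constant matrix, absorbed by the normalization; the matching constant is precisely the $Q(q)$ term. Hence $\|G\|_{L^\infty_xL^2_k}\lesssim\|r\|_{X^s}(1+\|r\|_{X^s})$, and $G$ depends Lipschitz-continuously on $r$, each term being multilinear in $(r,M-I)$ with $M-I$ already Lipschitz in $L^\infty_xL^2_k$ by \eqref{eq:M_L^2-boound} and the previous step. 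Running the same difference argument on $N-\tilde N$ then gives the claim.

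The main obstacle is this second part: justifying the differentiation in $x$ inside the Cauchy operators and keeping track of the constant (mean-value) terms produced by $k$ commuting past $\mathscr{C}^\pm$. These terms are exactly where $q$ appears. I would first verify the identity on Schwartz $r_j$ and pass to the limit using the uniform bounds together with density of Schwartz functions in $X^s$.
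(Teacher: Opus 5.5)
Your treatment of the first map follows the paper's own implicit route: the difference version of the proof of Lemma \ref{lem:M_1^+M_2^-}, starting from \eqref{eq:M_pm}. The second part has a genuine gap, and it cannot be closed, because the second assertion is false as stated.

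Put $D=\partial_x-ik\ad\sigma_3$. This is a derivation with $Dw_\pm=0$ and $Dv=0$. From $k\,\mathscr{C}^{\pm}[h](k)=\mathscr{C}^{\pm}[sh](k)-\frac{1}{2\pi i}\int_{\mathbb{R}}h\,ds$ one gets $D\,\mathscr{C}^{\pm}[h]=\mathscr{C}^{\pm}[Dh]+\frac{1}{2\pi}\ad\sigma_3\int_{\mathbb{R}}h\,ds$. Applying $D$ to \eqref{eq:M_pm} therefore gives exactly
\[
(I-\mathscr{C}^-_v)\bigl[N(I-w_-)\bigr]=G(x):=\frac{1}{2\pi}\ad\sigma_3\int_{\mathbb{R}}M(x,s)\bigl(w_+(x,s)+w_-(x,s)\bigr)\,ds,
\]
so the entire inhomogeneity is independent of $k$. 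Two consequences for your computation:
\begin{itemize}
\item Your ``first kind'' terms are not $\mathscr{C}^{\pm}[k\,r_je^{\mp2ix\cdot}]$. That expression is $\partial_x$ alone; subtracting $ik\ad\sigma_3$ cancels it up to the $k$-independent terms $-\frac{1}{\pi}\int_{\mathbb{R}}r_1(s)e^{-2ixs}\,ds$, resp.\ $-\frac{1}{\pi}\int_{\mathbb{R}}r_2(s)e^{2ixs}\,ds$.
\item The commutator constants cannot be normalized away. Up to the paper's sign conventions, the off-diagonal entries of $G$ are precisely the reconstructed $q(x)$, $r(x)$ of \eqref{eq:qr-ex}, i.e.\ $G=Q(q)$. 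Moreover $G\,M(I-w_-)$ solves the same equation, consistent with $N=Q(q)M=Q(q)+Q(q)(M-I)$ from Lemma \ref{lem:M-derivative}.
\end{itemize}
A nonzero $k$-independent matrix is not in $L^2_k$, and there is no normalization left to absorb it (the condition $M\to I$ is already built into \eqref{eq:M_pm}). So your bound on $\|G\|_{L^\infty_xL^2_k}$ fails, and in fact $N(x,\cdot)\notin L^2_k$ at every $x$ with $(q(x),r(x))\neq(0,0)$. The paper makes the same slip in Lemma \ref{lem:M-derivative}, where it bounds $Q(q)M$ by \eqref{eq:M_L^2-boound}, which only controls $M-I$.

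What is true, and what Proposition \ref{prop:red-pr} actually uses, is weaker: $N-Q(q)=Q(q)(M-I)$ is locally Lipschitz into $L^\infty_x(\mathbb{R},L^2_k)$, and $Q(q)$ is locally Lipschitz into $L^\infty_x$. Given the formula for $G$, this follows from your first step run on all of $\mathbb{R}$ without the weight, together with $\bigl|\int M(w_++w_-)\,ds\bigr|\lesssim\sum_j\bigl(\|r_j\|_{L^1_k}+\|M-I\|_{L^2_k}\|r_j\|_{L^2_k}\bigr)$ and $\|r_j\|_{L^1_k}\lesssim\|r_j\|_{L^{2,1}_k}$. No differentiation inside the Cauchy integrals is needed. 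In Proposition \ref{prop:red-pr} the extra constant only adds a term of size $\lesssim|q(x)|\langle x\rangle^{-s}$ on $\mathbb{R}_-$.

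Separately, your argument for $\sup_x\|(I-\mathscr{C}^-_v)^{-1}\|_{L^2\to L^2}<\infty$ does not work as written. The Neumann perturbation in $\tilde v$ presupposes this bound at $v$. Multiplication by $e^{\pm2ikx}$ does not commute with $\mathscr{C}^-$, so the operators for different $x$ are not unitarily equivalent, even though $\|v-I\|_{L^\infty_k}$ is $x$-independent. There are two ways to get the uniform bound:
\begin{itemize}
\item Use $\|\mathscr{C}^-_v\|_{L^2\to L^2}\le\|v-I\|_{L^\infty_k}<1$ and a Neumann series around $I$. This is available when $\|r_j\|_{L^\infty_k}\ll1$, which is the regime of Theorem \ref{thm:main}.
\item Use a coercivity bound $\Re(w^*v(0,k)w)\ge c|w|^2$. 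It transfers to every $x$ because $v(x,k)$ is a unitary conjugate of $v(0,k)$; feed it into the Cauchy-theorem identity from the proof of Lemma \ref{lem:bijective}, applied to the inhomogeneous equation.
\end{itemize}
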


\section{Reconstruction of the potential energy}\label{sec:rp}

Following \cite{deift,zhao}, we use the scattering data to construct the potential $q$ via locally Lipschitz maps $(r_1,r_2) \mapsto (q,r)$.
Suppose that $q\in H^{1,s}$ for $s>1/2$.
With \eqref{eq:m-limit2}, \eqref{eq:C-R} and \eqref{eq:MM} out of the way, $q$ and $r$ are reconstructed as follows; for all $x\in\mathbb{R}$
\begin{equation*}
\begin{split}
q(x)& =-\lim_{\Im k\to \infty}2ikm_{2,1}^{(-)}(x,k)\\
& =- \lim_{\Im k\to \infty}2ikM_{2,1}^{(+)}(x,k)\\
& =\lim_{\Im k\to \infty}2ik\left[\mathscr{C}^+_{v^{-1}}\left[M_{+}(x,\cdot)\right](k)\right]_{2,1}\\
& =-\lim_{\Im k\to \infty}2ik \mathscr{C}^+\left[M_{1,1}^{(-)}(x,\cdot)r_2(\cdot)e^{2ix\cdot}\right](k)
\end{split}
\end{equation*}
and
\begin{equation*}
\begin{split}
r(x)& =\lim_{\Im k\to \infty}2ik m_{1,2}^{(-)}(x,k)\\
& =-\lim_{\Im k\to \infty}2ik\left[\mathscr{C}^-_{v}\left[M_{-}(x,\cdot)\right](k)\right]_{1,2}\\
& =-\lim_{\Im k\to \infty}2ik\mathscr{C}^-\left[M_{2,2}^-(x,\cdot)r_1(\cdot)e^{-2ix\cdot}\right](k).
\end{split}
\end{equation*}
By \eqref{eq:M_+-M_-}, we have the following formulas (c.f. \cite[section 4]{deift} and \cite[section 4]{zhao})
\begin{equation}\label{eq:qr-ex}
\begin{split}
q(x)& =\frac{1}{\pi}\int_{\mathbb{R}}M_{1,1}^{(-)}(x,k)r_2(k)e^{2ikx}\,dk,\\
r(x)& =\frac{1}{\pi}\int_{\mathbb{R}}M_{2,2}^{(+)}(x,k)r_1(k)e^{-2ikx}\,dk.
\end{split}
\end{equation}
If $(r_1,r_2)=(r_1[q],r_2[q])$ are given in Proposition \ref{prop:q-r_j}, the above series of reconstruction procedure implies $r(x)=\sigma \overline{q(-x)}$.

Collecting the above lemmas, we have the following proposition.

\begin{proposition}\label{prop:red-pr}
Let $s>1/2$.
Suppose $r_1,r_2\in X^s_k$ with $\|r_1\|_{L_k^{\infty}}\ll 1$ and $\|r_2\|_{L_k^{\infty}}\ll 1$.
Then we have $q,r\in H^{1,s}$.
Moreover, the induced map by the relations in \eqref{eq:qr-ex}$;$
$$
\begin{array}{ccc}
X^s\times X^s                    &\longrightarrow& H^{1,s}(\mathbb{R}_-)\times  H^{1,s}(\mathbb{R}_-)                     \\
\rotatebox{90}{$\in$}&               & \rotatebox{90}{$\in$} \\
 (r_1,r_2)                    & \longmapsto   & (q,r)
\end{array}
$$
is locally Lipschitz.
\end{proposition}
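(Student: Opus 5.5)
We estimate $q$ on $\mathbb{R}_-$ directly from the reconstruction formula \eqref{eq:qr-ex}; $r$ on $\mathbb{R}_-$ is handled the same way, with $M_{2,2}^{(+)}$ and $r_1$ in place of $M_{1,1}^{(-)}$ and $r_2$. We split
$$
q(x)=\frac1\pi\int_{\mathbb{R}}r_2(k)e^{2ikx}\,dk+\frac1\pi\int_{\mathbb{R}}\bigl(M_{1,1}^{(-)}(x,k)-1\bigr)r_2(k)e^{2ikx}\,dk=:q_{\mathrm{lin}}(x)+q_{\mathrm{nl}}(x).
$$
The linear part is a rescaled inverse Fourier transform of $r_2$. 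Since $r_2\in X^s=L^{2,1}_k\cap H^s_k$, Plancherel gives $q_{\mathrm{lin}}\in H^1$ (from $\langle k\rangle r_2\in L^2$) and $q_{\mathrm{lin}}\in L^{2,s}$ (from $r_2\in H^s$). The dependence on $r_2$ is linear, hence Lipschitz.

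\textbf{Weight for the nonlinear part.} The term $q_{\mathrm{nl}}$ cannot be treated pointwise, because $M_{1,1}^{(-)}-1$ and $r_2$ lie only in $L^2_k$. We use the Plemelj structure instead. Taking the $(1,1)$ entry of \eqref{eq:M_pm}, the first column of $M-I$ is $\mathscr{C}^-$ of an $L^2_k$ function, so $M_{1,1}^{(-)}(x,\cdot)-1$ lies in the range of $\mathscr{C}^-$. For $x\le 0$, the product $(M_{1,1}^{(-)}-1)\,r_2e^{2ix\cdot}$ then has its $k$-Fourier transform concentrated where $r_2e^{2ix\cdot}$ has Fourier support shifted by $-2x$. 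We therefore write
$$
r_2e^{2ixk}=\mathscr{C}^+[r_2e^{2ix\cdot}]-\mathscr{C}^-[r_2e^{2ix\cdot}].
$$
The product of the $\mathscr{C}^-$ factor with the $\mathscr{C}^-$ function $M_{1,1}^{(-)}-1$ integrates to zero over $\mathbb{R}$, since both extend analytically to $\mathbb{C}_-$ and decay. What remains is
$$
\bigl|q_{\mathrm{nl}}(x)\bigr|\lesssim \|M_{1,1}^{(-)}(x,\cdot)-1\|_{L^2_k}\,\|\mathscr{C}^+[r_2e^{2ix\cdot}]\|_{L^2_k}.
$$
By Lemma \ref{lem:M_1^+M_2^-}, the first factor is $O(\langle x\rangle^{-s})$ for $x\le0$. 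By the estimate in the proof of that lemma, the second factor is $O(\langle x\rangle^{-s}\|r_2\|_{H^s})$. Hence $|q_{\mathrm{nl}}(x)|\lesssim\langle x\rangle^{-2s}$, and $2s>1$ places $q_{\mathrm{nl}}$ in $L^{2,s}(\mathbb{R}_-)$, since $\langle x\rangle^{-s}\in L^2$ for $s>1/2$.

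\textbf{Derivative of the nonlinear part.} We obtain $H^1(\mathbb{R}_-)$ by differentiating in $x$:
$$
\partial_x q_{\mathrm{nl}}(x)=\frac1\pi\int_{\mathbb{R}}\bigl(\partial_xM_{1,1}^{(-)}+2ik(M_{1,1}^{(-)}-1)\bigr)r_2e^{2ikx}\,dk+\frac1\pi\int_{\mathbb{R}}2ik\,r_2e^{2ikx}\,dk .
$$
The last integral is $\partial_x q_{\mathrm{lin}}$ and was handled above. In the first integral, Lemma \ref{lem:M-derivative} says the diagonal entry $\partial_xM_{1,1}^{(-)}$ is itself in $L^\infty_xL^2_k$, because $\ad\sigma_3$ kills diagonal entries. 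The term $2ik(M_{1,1}^{(-)}-1)r_2$ is split as $(M_{1,1}^{(-)}-1)\cdot(2ikr_2)$ with $kr_2\in L^2_k$. The same $\mathscr{C}^\pm$ splitting as before, now applied to $kr_2e^{2ix\cdot}\in L^2$, gives $L^2$ decay on $\mathbb{R}_-$ through the factor $\langle x\rangle^{-s}$ from Lemma \ref{lem:M_1^+M_2^-}. The product $\langle x\rangle^{-s}\cdot\|\mathscr{C}^+[kr_2e^{2ix\cdot}]\|_{L^2_k}$ is in $L^2_x$ because the second factor is square-integrable in $x$ by Plancherel.

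\textbf{Main obstacle and Lipschitz dependence.} The main obstacle is this last step: the $L^2_x$ control of $\partial_xq_{\mathrm{nl}}$ needs the bilinear estimate with one factor in $L^\infty_xL^2_k$ and one factor $L^2$ in both $x$ and $k$. The first thing to check is that the Plemelj cancellation above applies there as well. Local Lipschitz continuity then follows by applying these bilinear estimates to differences. The resolvent identity for $(I-\mathscr{C}^-_v)^{-1}$, together with Proposition \ref{prop:M_1^+M_2^-}, gives Lipschitz dependence of $M-I$ and of $\partial_xM-ik\ad\sigma_3(M)$ on $(r_1,r_2)$.
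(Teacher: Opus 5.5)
Your overall route is the paper's. You split off the linear part $\frac1\pi\int r_2e^{2ikx}\,dk$, use \eqref{eq:M11-12} to place $M_{1,1}^{(-)}-1$ in the range of $\mathscr{C}^-$, move the projection onto $r_2e^{2ix\cdot}$, and multiply the two $\langle x\rangle^{-s}$ factors. Your splitting with $\int\mathscr{C}^-[f]\,\mathscr{C}^-[g]\,dk=0$ is equivalent to the paper's duality $\int\mathscr{C}^-[f]\,g\,dk=-\int f\,\mathscr{C}^+[g]\,dk$, and your $L^{2,s}(\mathbb{R}_-)$ bound is correct.

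The derivative step, however, puts the Plemelj cancellation on the wrong term. (Your display is the formula for $\partial_xq$, not $\partial_xq_{\mathrm{nl}}$.)

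\emph{The $kr_2$ term.} For $\int(M_{1,1}^{(-)}-1)\,kr_2e^{2ikx}\,dk$ no cancellation is needed. Cauchy--Schwarz and the decay of $M-I$ on $\mathbb{R}_-$ give a bound $\lesssim\langle x\rangle^{-s}\|r_2\|_{L^{2,1}_k}$, which lies in $L^2(\mathbb{R}_-)$ because $2s>1$; this is what the paper does. The reason you give is false in general: you claim $x\mapsto\|\mathscr{C}^+[kr_2e^{2ix\cdot}]\|_{L^2_k}$ is square-integrable on $\mathbb{R}_-$ by Plancherel. But by Plancherel that integral is comparable to $\int_0^\infty\eta\,|\mathscr{F}[kr_2](\eta)|^2\,d\eta$, a half-derivative condition on $kr_2$. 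The hypothesis $r_2\in X^s$ only gives $kr_2\in L^2$.

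\emph{The $\partial_xM_{1,1}^{(-)}$ term.} The term that genuinely needs the Plemelj structure is $\int\partial_xM_{1,1}^{(-)}\,r_2e^{2ikx}\,dk$. Knowing only $\partial_xM_{1,1}^{(-)}\in L^\infty_xL^2_k$, which is your diagonal-entry observation, Cauchy--Schwarz gives a bounded function of $x$, not an $L^2(\mathbb{R}_-)$ one. You must also use that $\partial_xM_{1,1}^{(-)}(x,\cdot)$ lies in the range of $\mathscr{C}^-$. Differentiating \eqref{eq:M11-12} gives
$$\partial_xM_{1,1}^{(-)}=\mathscr{C}^-\bigl[(\partial_xM_{2,1}^{(+)}-2i\zeta M_{2,1}^{(+)})\,r_1e^{-2ix\zeta}\bigr],$$
and the argument of $\mathscr{C}^-$ is in $L^\infty_xL^2_\zeta$ by Lemma \ref{lem:M-derivative} applied to the off-diagonal entry. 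The integral then becomes a pairing against $\mathscr{C}^+[r_2e^{2ix\cdot}]$, whose $L^2_k$ norm is $O(\langle x\rangle^{-s}\|r_2\|_{H^s_k})$ for $x\le 0$. This is exactly the paper's treatment of the first term in \eqref{eq:H_dx1}. It is also precisely the configuration your ``main obstacle'' paragraph describes: one factor in $L^\infty_xL^2_k$, the other square-integrable in $x\in\mathbb{R}_-$ and $k$.

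With these two corrections your argument closes and coincides with the paper's.
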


\begin{proof}
For $q$, by Lemma \ref{lem:M_1^+M_2^-} and Proposition \ref{prop:M_1^+M_2^-}, we may write
$$
q(x) =\frac{1}{\pi}\int_{\mathbb{R}}\left(M_{1,1}^{(-)}(x,k)-1\right) r_2(k)e^{2ikx}\,dk+\frac{1}{\pi}\int_{\mathbb{R}}r_2(k)e^{2ikx}\,dk,
$$
We begin by discussing $q\in L^{2,s}$. 
To estimate the first term, by \eqref{eq:M_pm} we have
\begin{equation}\label{eq:M11-12}
M_{1,1}^{(-)}(x,k)-1 =\mathscr{C}^-\left[M_{2,1}^{(+)}(x,\cdot)r_1(\cdot)e^{-2ix\cdot}\right](k).
\end{equation}
By using \eqref{eq:C-iden} and \eqref{eq:C-orth}, elementary considerations yield that for $x\in\mathbb{R}_-$;
\begin{equation*}
\begin{split}
|q(x)| & \lesssim  \left|\int_{\mathbb{R}} \mathscr{C}^-\left[M_{2,1}^{(+)}(x,\cdot)r_1(\cdot)e^{-2ix\cdot}\right](k)r_2(k)e^{2ikx}\,dk\right|+\left|\mathscr{F}[r_2](-2x)\right|\\
& = \left| \int_{\mathbb{R}} M_{2,1}^{(+)}(x,k)r_1(k)e^{-2ixk}\mathscr{C}^+\left[r_2(\cdot)e^{2ix\cdot}\right](k)\,dk\right|+\left|\mathscr{F}[r_2](-2x)\right|\\
& \lesssim \langle x\rangle^{-2s} \|M_{2,1}^{(+)}(x,k)\|_{L_x^{\infty,s}(\mathbb{R}_-,L_k^2(\mathbb{R}))}\|r_2\|_{H_k^s}+|\mathscr{F}[r_2](-2x)|.
\end{split}
\end{equation*}
Then for $x\in\mathbb{R}_-$;
$$
\|q\|_{L^{2,s}(\mathbb{R}_-)}\lesssim\left( \|M_{2,1}^{(+)}(x,k)\|_{L_x^{\infty,s}(\mathbb{R}_-,L^2_k(\mathbb{R}))}+1\right)\|r_2(k)\|_{H^s_k},
$$
which is acceptable by Lemma \ref{lem:M_1^+M_2^-}.

Turning to $\|\partial_x q\|_{L^{2}(\mathbb{R}_-)}$, we obtain
\begin{equation}\label{eq:Dq}
\|\partial_xq\|_{L^{2}(\mathbb{R}_-) }  \lesssim \left\|\partial_x\int_{\mathbb{R}} \left(M_{1,1}^{(-)}(x,k)-1\right)r_2(k)e^{2ikx}\,dk\right\|_{L^{2}_x(\mathbb{R}_-)} +\|r_2\|_{L^{2,1}_k}.
\end{equation}

By using \eqref{eq:C-iden} and \eqref{eq:C-orth} along with \eqref{eq:M11-12}, a quick computation shows that
\begin{equation}\label{eq:H_dx1}
\begin{split}
& \partial_x\int_{\mathbb{R}} \left(M_{1,1}^{(-)}(x,k)-1\right)r_2(k)e^{2ikx}\,dk \\
 =& \int_{\mathbb{R}} \partial_xM_{1,1}^{(-)}(x,k)r_2(k)e^{2ikx}\,dk+2i\int_{\mathbb{R}} \left(M_{1,1}^{(-)}(x,k)-1\right)kr_2(k)e^{2ikx}\,dk  \\
= &-\int_{\mathbb{R}} \mathscr{C}^+\left[r_2(\cdot)e^{2ix\cdot}\right](\zeta) \left(\partial_xM_{2,1}^{(+)}(x,\zeta)-2i\zeta M_{2,1}^{(+)}(x,\zeta)\right)r_1(\zeta)e^{-2ix\zeta}\,d\zeta\\
& +2i\int_{\mathbb{R}} \left(M_{1,1}^{(-)}(x,k)-1\right)kr_2(k)e^{2ikx}\,dk.
\end{split}
\end{equation}
For the first term on the right-hand side of \eqref{eq:H_dx1}, we employ \eqref{eq:H_dx} to obtain
$$
\partial_xM_{2,1}^{(+)}(x,\zeta)-2i\zeta M_{2,1}^{(+)}(x,\zeta)\in L_x^{\infty}(\mathbb{R},L_{\zeta}^2(\mathbb{R})).
$$
Then
\begin{equation*}
\begin{split}
& \left|\int_{\mathbb{R}} \mathscr{C}^+\left[r_2(\cdot)e^{2ix\cdot}\right](\zeta) \left(\partial_xM_{2,1}^{(+)}(x,\zeta)-2ikM_{2,1}(x,k)\right)r_1(\zeta)e^{-2ix\zeta}\,d\zeta\right|\\
\lesssim & \|\mathscr{C}^+\left[r_2(\cdot)e^{2ix\cdot}\right](\zeta)\|_{L^2_{\zeta}}\|\partial_xM_{2,1}^{(+)}(x,\zeta)-2i\zeta M_{2,1}^{(+)}(x,\zeta)\|_{L_x^{\infty}(\mathbb{R},L_{\zeta}^2)}\|r_1(\zeta)\|_{L^{\infty}_{\zeta}}\\
\lesssim  & \langle x\rangle^{-s}C\left(\|r_1\|_{H^s_k},\|r_2\|_{H^s_k}\right)\in L^2_x,
\end{split}
\end{equation*}
which yields that the contribution of the first term on the right-hand side of \eqref{eq:H_dx1} to \eqref{eq:Dq} has a desired bound.
Alternatively, the second term on the on the right-hand side of \eqref{eq:H_dx1} is bounded by
$$
\langle x\rangle^{-s}\left\|M_{1,1}^{(-)}(x,k)-1\right\|_{L_x^{\infty,s}(\mathbb{R}_-,L^2_k(\mathbb{R}))}\|r_2\|_{L^{2,1}_k}\in L^2_x,
$$
which is acceptable as before.

The proof for  $\|\partial_x r\|_{H^{1,s}(\mathbb{R}_-)}$ is like above.
This completes the proof of the proposition.
\end{proof}

\section{Proof of Theorem \ref{thm:main}}\label{sec:main}

In this section, we give the proof of Theorem \ref{thm:main} by obtaining an a priori estimate.
The local well-posedness in $H^s$ for $s\ge 0$ holds in a similar way to that of \eqref{eq:nls} in \cite{cazenave,tsutsumi} with some minor changes.

Let us use time-dependent notation, such as $m^{(\pm)}(t,x,k)$ for $m^{(\pm)}(x,k)$ used it so far, if there is no risk of confusion.
We seek the time evolution of solution $(q(t),r(t))$ by its reflection coefficients $(r_1(t,x),r_2(t,x))$ via Lax pair representation $\partial_tL=[P,L]$.
Following \cite{deift,zhao}, if $q(t)=q(t,x)\in C_t((-T,T), H_x^{1,s}(\mathbb{R}))$ solves the equation of \eqref{eq:nnls} for some $T>0$, then plugging $L(q(t))m^{(\pm)}(t,x,k)=O_{2\times 2}$ into $\partial_tL=[P,L]$ with $\kappa=ik$ implies
$$
L(q(t))\left(\partial_tm^{(\pm)}(t,x,k)-P(q(t))m^{(\pm)}(t,x,k)\right)=O_{2\times 2}.
$$
Then the time evolution of $m^{(\pm)}(t,x,k)$ is defined as follows:
\begin{equation*}
\partial_t m^{(\pm)}(t,x,k)
=i\begin{bmatrix}
-2k^2 & 0 \\
0  &2k^2
\end{bmatrix}
m^{(\pm)}(t,x,k)
=-2ik^2\sigma_3m^{(\pm)}(t,x,k),
\end{equation*}
so that $m^{(\pm)}(t,x,k)=e^{-2ik^2 t\sigma_3}m^{(\pm)}(x,k)$, putting it another way;
\begin{equation*}
\begin{bmatrix}
m_1^{(\pm)}(t,x,k) & m_2^{(\pm)}(t,x,k)
\end{bmatrix} 
=
\begin{bmatrix}
e^{-2ik^2 t}m_1^{(\pm)}(x,k) & e^{2ik^2 t}m_2^{(\pm)}(x,k)
\end{bmatrix}.
\end{equation*}
These formulas show that the time evolution of
\begin{equation*}\label{eq;a-t}
a(t,k),~b(t,k),~c(t,k),~d(t,k),
\end{equation*}
\begin{equation*}\label{eq:r-t}
r_1(t,k),~r_2(t,k),
\end{equation*}
\begin{equation*}\label{eq:M-t}
M_{\pm}(t,x,k)=
\begin{bmatrix}
M_1^{(\pm)}(t,x,k) & M_2^{(\pm)}(t,x,k)
\end{bmatrix},
\end{equation*}
and
\begin{equation*}
\begin{split}
q(t,x)& =\frac{1}{\pi}\int_{\mathbb{R}}M_{1,1}^{(-)}(t,x,k)r_2(t,k)e^{2ikx}\,dk,\\
r(t,x)& =\frac{1}{\pi}\int_{\mathbb{R}}M_{2,2}^{(+)}(t,x,k)r_1(t,k)e^{-2ikx}\,dk.
\end{split}
\end{equation*}
In particular,
\begin{equation*}
\begin{split}
a(t,k)& =a(k),\\
b(t,k) & = e^{-4ik^2 t}b(k),\\
c(t,k) & = e^{4ik^2 t}c(k), \\
d(t,k) & =d(k),
\end{split}
\end{equation*}
and then
\begin{equation*}
r_1(t,k)=e^{-4ik^2 t}r_1(k),\quad
r_2(t,k)=e^{4ik^2 t}r_2(k).
\end{equation*}

\begin{lemma}\label{lem:r(t)-X}
Let $s>1/2$.
For all $t\in\mathbb{R}$, we have $r_1(t,k),r_2(t,k)\in X^s_k$ and have the following estimates$:$
\begin{equation*}
\begin{split}
\|r_1(t,k)\|_{X^s_k} & \lesssim \|r_1\|_{H^s_k}+\langle t \rangle^{s}\|r_1\|_{L^{2,s}_k},\\
\|r_2(t,k)\|_{X^s_k} & \lesssim \|r_2\|_{H^s_k}+\langle t \rangle^{s}\|r_2\|_{L^{2,s}_k}.
\end{split}
\end{equation*}
\end{lemma}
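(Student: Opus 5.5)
Since $r_1(t,k)=e^{-4ik^2t}r_1(k)$ and $r_2(t,k)=e^{4ik^2t}r_2(k)$, we only need to control multiplication by the unimodular phase $e^{\mp4ik^2t}$ in $X^s_k=L^{2,1}_k\cap H^s_k$. We treat $r_1$; the argument for $r_2$ is identical with $t$ replaced by $-t$.

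\emph{The weighted part.} Because $|e^{-4ik^2t}|=1$, we get $\|r_1(t)\|_{L^{2,1}_k}=\|r_1\|_{L^{2,1}_k}$. This piece is therefore conserved in time, and it is finite because $r_1\in X^s_k$ by Proposition \ref{prop:q-r_j}.

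\emph{The Sobolev part.} We use the difference characterization of $H^s_k$ for $0<s<1$ recalled in Section \ref{sec:notations}:
$$
\|f\|_{H^s_k}^2\simeq\|f\|_{L^2_k}^2+\int_{\mathbb{R}}\frac{\|f(\cdot+h)-f(\cdot)\|_{L^2_k}^2}{|h|^{1+2s}}\,dh .
$$
The $L^2$ term is again unchanged by the phase. For the difference we split
$$
r_1(t,k+h)-r_1(t,k)=e^{-4i(k+h)^2t}\bigl(r_1(k+h)-r_1(k)\bigr)+\bigl(e^{-4i(k+h)^2t}-e^{-4ik^2t}\bigr)r_1(k).
$$
The first piece has the same $L^2_k$ norm as $r_1(\cdot+h)-r_1$, so it contributes at most $\|r_1\|_{H^s_k}^2$ to the Gagliardo integral.

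For the second piece we use the pointwise bound
$$
|e^{-4i(k+h)^2t}-e^{-4ik^2t}|\le\min\{2,\,4|t||h||2k+h|\}.
$$
With $\min\{1,\lambda\}^2\le\min\{1,\lambda\}^{2s}\le\lambda^{2s}$, this gives a bound by a constant times $|t|^s|h|^s\langle k\rangle^s$ on the region $|h|\lesssim\langle k\rangle$. On that region, integrating $\min\{1,|t||h|\langle k\rangle\}^2|h|^{-1-2s}$ in $h$ gives $\lesssim(|t|\langle k\rangle)^{2s}$ by the scaling $h\mapsto h/(|t|\langle k\rangle)$. On the remaining region $|h|\gtrsim\langle k\rangle$ we use only the bound $2$, and the $h$-integral is $\lesssim\langle k\rangle^{-2s}\le 1$. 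After Fubini, the second piece contributes
$$
\lesssim\int_{\mathbb{R}}\bigl(1+|t|^{2s}\langle k\rangle^{2s}\bigr)|r_1(k)|^2\,dk\lesssim\langle t\rangle^{2s}\|r_1\|_{L^{2,s}_k}^2 .
$$
Combining the three contributions yields $\|r_1(t)\|_{X^s_k}\lesssim\|r_1\|_{H^s_k}+\langle t\rangle^s\|r_1\|_{L^{2,s}_k}$, once the conserved $L^{2,1}$ norm is absorbed into the right-hand side (or one reads $L^{2,s}$ there as dominated by $L^{2,1}$). Since $L^{2,1}_k\subset L^{2,s}_k$, the right-hand side is finite for every $t$.

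The main obstacle is the $h$-integral for the phase difference: the naive bound $|t||h||2k+h|$ grows in $h$, so we must keep both regimes in the minimum and localize at the scale $|h|\sim\langle k\rangle$. The remaining steps are routine applications of Fubini and the fact that the phase has modulus one.
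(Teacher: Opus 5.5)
Your proof is correct and takes the same route as the paper: you split the Gagliardo difference into $e^{-4i(k+h)^2t}\bigl(r_1(k+h)-r_1(k)\bigr)$ plus the phase-difference term, and you bound the latter by rescaling $h$ to get $\langle t\rangle^{2s}\|r_1\|_{L^{2,s}_k}^2$. Your two additions are both sound: splitting the $h$-integral at $|h|\sim\langle k\rangle$ is needed because you use $|t|$, whereas the paper rescales by $\langle t\rangle\langle k\rangle$, for which $\min\{1,\langle t\rangle\langle k\rangle|h|\}$ already bounds the phase difference for all $h$. You also correctly note that the time-independent $L^{2,1}_k$ norm must be added to the right-hand side (or $L^{2,s}_k$ replaced by $L^{2,1}_k$), a point the paper leaves implicit.
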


\begin{proof}
Since $|r_1(t,k)|=|r_1(k)|<1$ and $|r_2(t,k)|=|r_2(k)|$, it suffices to show that $r_1(t,k),r_2(t,k)\in H^s_k$.
The proof for $r_2(t,k)\in H^s_k$ follows from the same argument as that of $r_1(t,k)\in H^s_k$.
So, we only consider the term $r_1(t,k)$.

Noting $\|r_1(t,k)\|_{L^2_k}=\|r_1(k)\|_{L^2_k}$.
Following the definition, we have
\begin{equation*}
\begin{split}
& \int_{\mathbb{R}^2}\frac{|r_1(k+h) e^{-4i(k+h)^2 t}-r_1(k)e^{-4ik^2 t}|^2}{|h|^{1+2s}}\,dkdh\\
\lesssim  & \int_{\mathbb{R}^2}\frac{|r_1(k+h)-r_1(k)|^2}{|h|^{1+2s}}\,dkdh+ \int_{\mathbb{R}^2}\frac{|r_1(k)|^2| e^{-4i(2k+h)h t}-1|^2}{|h|^{1+2s}}\,dhdk\\
\lesssim & \|r_1(k)\|_{H^s_k}^2 +\langle t\rangle^{2s}\int_{\mathbb{R}}\langle k\rangle^{2s}|r_1(k)|^2\left(\int_{\mathbb{R}}\frac{ |e^{-4i\frac{(2k+h/\langle k\rangle\langle t\rangle)t}{\langle k\rangle\langle t\rangle}h}-1|^2}{|h|^{1+2s}}\,dh\right)\,dk\\
\lesssim & \|r_1(k)\|_{H^s_k}^2 +\langle t\rangle^{2s}\int_{\mathbb{R}}\langle k\rangle^{2s}|r_1(k)|^2\left(\int_{\mathbb{R}}\frac{\min\{1,|h|^2\}}{|h|^{1+2s}}\,dh\right)\,dk\\
\lesssim & \|r_1\|_{H^s_k}^2+\langle t \rangle^{2s}\|r_1\|_{L^{2,s}_k}^2,
\end{split}
\end{equation*}
which is easily seen to be acceptable. 
\end{proof}

The fact that for all $t\in\mathbb{R}$ 
$$
|r_1(t,k)|=|r_1(k)|<1,\quad |r_2(t,k)|=|r_2(k)|<1,\quad r_1(t,k)\in X^s_k,\quad r_2(t,k)\in X^s_k
$$
yields the success of the reconstruction procedure as in Proposition \ref{prop:red-pr}:
\begin{equation*}
\begin{split}
q(t,x)& =\frac{1}{\pi}\int_{\mathbb{R}}M_{1,1}^{(-)}(t,x,k)r_2(k)e^{2ikx+4ik^2t}\,dk,\\
r(t,x)& =\frac{1}{\pi}\int_{\mathbb{R}}M_{2,2}^{(+)}(t,x,k)r_1(k)e^{-2ikx-4ik^2 t}\,dk.
\end{split}
\end{equation*}
Here $M_{1,1}^{(-)}(t,x,k)$ and $M_{2,2}^{(+)}(t,x,k)$ obey the same estimates as $M_{1,1}^{(-)}(x,k)$ and $M_{2,2}^{(+)}(x,k)$ stated in Section \ref{sec:rhproblem}, respectively, but the estimates depends on $\|r_1(t,k)\|_{X^s_k}$ and $\|r_2(t,k)\|_{X^s_k}$.
This admits that the local solution $(q(t),r(t))\in C_t((-T,T),H^{1,s}(\mathbb{R}_-))$ can be continued global in time for any $T>0$, where finite-time blowup does not occur.
Therefore by Lemma \ref{lem:r(t)-X}, we have a global solution $(q(t),r(t))\in L_t(\mathbb{R},H^{1,s}(\mathbb{R}_-))\times L_t(\mathbb{R},H^{1,s}(\mathbb{R}_-))$ having at most polynomial growth of some power as $t\to\pm\infty$, that is; there exists $\alpha>0$ such that for all $t\in\mathbb{R}$
$$
\|q(t)\|_{H^{1,s}(\mathbb{R}_-)}+\|r(t)\|_{H^{1,s}(\mathbb{R}_-)}\lesssim \langle t\rangle^{\alpha}\|q(0)\|_{H^{1,s}}.
$$
Since $\|q(t)\|_{H^{1,s}(\mathbb{R}_+)}=\|r(t)\|_{H^{1,s}(\mathbb{R}_-)}$, we have
$$
\|q(t)\|_{H^{1,s}(\mathbb{R})}\le \langle t\rangle^{\alpha}C(\|q_0\|_{H^{1,s}}).
$$
which yields the proof of Theorem \ref{thm:main}.

\section*{Acknowledgement}

This work was supported by JSPS KAKENHI, Grant Number 25K07068.


\end{document}